\DeclareMathOperator{\prox}{prox}
\DeclareMathOperator{\env}{env}
\DeclareMathOperator{\argmin}{argmin}
\DeclareMathOperator{\sign}{sgn}
\DeclareMathOperator{\ri}{ri}
\DeclareMathOperator{\dom}{dom}
\DeclareMathOperator{\bd}{bd}
\DeclareMathOperator{\interior}{int}
\newtheorem{theorem}{Theorem}
\newtheorem{corollary}{Corollary}
\newtheorem{lemma}{Lemma}
\newtheorem{definition}{Definition}
\newtheorem{proposition}{Proposition}
\theoremstyle{remark}
\newtheorem{remark}{Remark}
\newcommand*\samethanks[1][\value{footnote}]{\footnotemark[#1]}
\newcommand{\dbar}{d\hspace*{-0.08em}\bar{}\hspace*{0.1em}}
\newcommand{\vertiii}[1]{{\left\vert\kern-0.25ex\left\vert\kern-0.25ex\left\vert #1
    \right\vert\kern-0.25ex\right\vert\kern-0.25ex\right\vert}}
\newcommand\T{\rule{0pt}{2.6ex}}       
\newcommand\B{\rule[-2ex]{0pt}{0pt}}
\providecommand{\keywords}[1]
{
  \textbf{\textit{Keywords---}} #1
}
\begin{document}

\title{Structured Sparsity Promoting Functions}

\author{Lixin Shen\thanks{Department of Mathematics, Syracuse University, Syracuse, NY 13244, USA. Email: \texttt{lshen03@syr.edu} and \texttt{eetripp@syr.edu}} \and {Bruce W. Suter}\thanks{Air Force Research Laboratory, Rome, NY. Email: \texttt{bruce.suter@us.af.mil.}}
\and Erin E. Tripp\samethanks[1]}

\maketitle
\begin{abstract}
Motivated by the minimax concave penalty based variable selection in high-dimensional linear regression, we introduce a simple scheme to construct structured semiconvex sparsity promoting functions from convex sparsity promoting functions and their Moreau envelopes. Properties of these functions are developed by leveraging their structure. In particular, we provide sparsity guarantees for the general family of functions. We further study the behavior of the proximity operators of several special functions including indicator functions of closed convex sets, piecewise quadratic functions, and the linear combinations of them. To demonstrate these properties, several concrete examples are presented and existing instances are featured as special cases.
\end{abstract}

\keywords{Moreau envelope, proximity operator, variable selection, sparsity, thresholding operator}


\section{Introduction}
Natural signals and data streams are often inherently sparse in certain bases or dictionaries where they can be approximately represented by only a few significant components carrying the most relevant information \cite{Candes-Tao:IEEE-TIT:06,mallat:99,Suter:97}. Regularization methods are a powerful tool for sparse modeling and have been widely used to analyze these data sets. A particular method depends on the choice of penalty used to enforce constraints on the objective. The natural penalty function to promote sparsity is the so-called $\ell_0$-norm, which counts the nonzero components of a vector. However minimizing the $\ell_0$-norm is a combinatorial optimization problem which is known to be NP-hard.

To overcome these computational difficulties, regularization methods with the $\ell_1$-norm as its penalty function like LASSO \cite{Tibshirani:jrss:96} and Dantzig selectors \cite{Candes-Tao:IEEE-TIT:06} have been proposed. The convexity of the $\ell_1$-norm makes the implementation of the corresponding methods numerically tractable. However, despite its appealing properties, convex regularization methods can suffer from the bias issue that is inherited from the convexity of the penalty function. To address this, non-convex penalties including the $\ell_q$-penalty with $0<q<1$ \cite{Frank-Friedman:Technometrics:1993}, the smoothly clipped absolute deviation penalty (SCAD) \cite{Fan-Li:JASA:01} and the minimax concave penalty (MCP) \cite{Zhang:AS:2010} have been proposed to replace the $\ell_1$-norm penalty.

In this paper, we introduce a family of semiconvex sparsity promoting functions of which each is the difference of a convex sparsity promoting function with its Moreau envelope. Roughly speaking, a sparsity promoting function is one that admits its global minimum at the origin but is nondifferentiable there; a function is semiconvex if it becomes convex after adding a convex quadratic function to it. Semiconvex functions possess useful structure and obey generalizations of many classical results from convex analysis (see, e.g. \cite{Bolte-Daniilidis-Ley-Mazet:TAMS:2010}).

We show that as long as a convex function is a sparsity promoting function, so is the difference between it and its Moreau envelope. This result makes the construction of nonconvex sparsity promoting functions effortless.  Some interesting properties of such functions are: (i) they are always nonnegative and semiconvex and (ii) they are a special type of difference of convex (DC) functions with one having a Lipschitz continuous gradient. Due to these properties, we will refer to these functions as structured semiconvex sparsity promoting functions. These properties enable us to make use of the fruitful results, for example, in DC programming \cite{Tao-Le:AMV:1997}, to develop efficient algorithms for the associated regularized optimization problems. What's more, these functions provide a bridge between convex and nonconvex sparsity promoting penalties. As a specific example, we recover the MCP from the difference of the $\ell_1$-norm and its envelope. It has been shown (e.g. in \cite{Soubies-Blanc-Feraud-Aubert:SIAMOPT:2017}) that this closely approximates the $\ell_0$-norm while preserving the continuity and subdifferentiability of $\ell_1$.

The proximity operator, which was first introduced by Moreau in \cite{moreau:BSMF:65} as a generalization of the notion
of projection onto a convex set, has been used extensively in nonlinear optimization (see, e.g., \cite{Attouch-Bolte-Svaiter:MP:13,Bauschke-Combettes:11,Combettes-Wajs:MMS:05}).  The desired features of the aforementioned regularization methods can be explained in terms of the proximity operators of the corresponding penalties. Therefore to determine the effectiveness of our proposed functions, we must examine the behavior of their proximity operators. The proximity operator of the $\ell_0$-norm is the hard thresholding operator, which annihilates all entries below a certain threshold and keeps all entries above the threshold. In fact, we see that hard thresholding rules are characteristic of penalties which are concave near the origin and constant elsewhere. More generally, we provide sparsity guarantees in terms of thresholding behavior for the entire family of structured semiconvex sparsity promoting functions, with further details for certain special functions.

The rest of the paper is organized as follows.  Section~\ref{sec:Motivation} provides motivation for the suggested scheme. Section~\ref{sec:CSPF} recalls some necessary background in optimization and introduces the concept of the sparsity promoting function. In Section~\ref{sec:NSPF}, we construct a family of semiconvex sparsity promoting functions which are the difference of convex sparsity promoting functions and their Moreau envelopes. Many interesting properties of this family of functions are presented. In Section~\ref{sec:Special}, several special sparsity promoting functions are presented and discussed thoroughly. Some examples of practical interest are provided in Section~\ref{sec:Examples}. We conclude by discussing applications and plans for future work in Section~\ref{sec:concluions}.

\section{Motivation} \label{sec:Motivation}
Our work on semiconvex sparsity promoting functions was motivated mainly by the minimax concave penalty (MCP) based variable selection in high-dimensional linear regression \cite{Zhang:AS:2010}. Variable selection is fundamental in statistical analysis
of high-dimensional data.  It is also easily interpretable in terms of sparse signal recovery. We consider a linear regression model with $n$-dimensional response vector $y$, $n \times p$ model matrix $X$, $p$-dimensional regression vector $\gamma$, and $n$-dimensional error vector $\epsilon$:
$$
y = X \gamma +\epsilon.
$$
The goal of variable selection is to recover the true underlying sparse model of the pattern $\{j: \gamma_j \neq 0\}$ and to estimate the non-zero regression coefficients $\gamma_j$, where $\gamma_j$ is the $j$-th component of $\gamma$. For small $p$, subset selection methods
can be used to find a good guess of the pattern (see, e.g., \cite{Schwarz:AS:1978}). However,  subset selection becomes computationally infeasible for large $p$.

To overcome the computational difficulties of subset selection method, the method of penalized least squares is widely used in variable selection to produce meaningful interpretable models:
\begin{equation}\label{model:regression}
\min_{\gamma \in \mathbb{R}^p} \frac{1}{2n}\|y-X \gamma\|^2 + \sum_{j=1}^p \rho(|\gamma_j|,\lambda),
\end{equation}
where $\rho(\cdot,\lambda)$ is a penalty function indexed by $\lambda \ge 0$.  The penalty function $\rho(t,\lambda)$, defined on $[0, \infty)$, is assumed to be nondecreasing in $t$ with $\rho(0, \lambda)=0$ and continuously differentiable for $t \in (0, \infty)$. The formulation in \eqref{model:regression} includes many  popular variable selection methods. For example, the best subset selection amounts to using the $\ell_0$ penalty $\rho(|t|, \lambda)=\frac{\lambda^2}{2} \mathbbm{1}_{\{|t| \neq 0\}}$ while LASSO \cite{Tibshirani:jrss:96} and basis pursuit \cite{Chen-Donoho-Saunders:SIAMSC:98} use  the $\ell_1$ penalty $\rho(|t|, \lambda)=\lambda |t|$. Here $\mathbbm{1}_{\{u \in E\}}$ denotes the characteristic function and $\mathbbm{1}_{\{u \in E\}}$ equals $1$ if $u \in E$ and $0$ otherwise. The estimator (i.e, the hard thresholding operator) with the $\ell_0$ penalty suffers from instability in model prediction while the estimator (the soft thresholding operator) with the $\ell_1$ penalty suffers from the bias issue, severely interfering with variable selection for large $p$ \cite{Fan-Li:JASA:01}. To remedy this issue, the SCAD penalty was introduced in \cite{Fan-Li:JASA:01}. The estimator with the SCAD penalty is continuous and leaves large components not excessively penalized. In \cite{Zhang:AS:2010}, the MCP penalty was introduced and is defined as follows
\begin{equation}\label{def:MCP}
\rho(|t|, \lambda) = \lambda \int_0^{|t|} \max\left\{0,1-\frac{x}{a\lambda}\right\} \; dx,
\end{equation}
where the parameter $a>0$. This penalty function (see \cite{Zhang:AS:2010}) minimizes the maximum concavity
$$
\kappa(\rho, \lambda) := \sup_{0<t_1<t_2<\infty}-\frac{\rho(t_2, \lambda)-\rho(t_1, \lambda)}{t_2-t_1}
$$
subject to the unbiasedness $\frac{\partial}{\partial t} \rho(t, \lambda)=0$ for all $t>a\lambda$ and selection features $\frac{\partial}{\partial t} \rho(0+, \lambda)=\lambda$. The number $\kappa(\rho, \lambda)$ is related to the computational complexity of regularization method for solving \eqref{model:regression}. The simulations in \cite{Fan-Li:JASA:01,Zhang:AS:2010} gave a strong statistical evidence that the estimators from the non-convex penalty functions SCAD and MCP are useful in variable selection. Recently, an application of MCP to signal processing was reported in \cite{Selesnick:IEEESPL:2017}.

Due to its success in applications, we take a closer look at MCP. The MCP function in \eqref{def:MCP} can be rewritten as
$$
\rho(|t|, \lambda)  = \lambda (|t|-\env_{a \lambda |\cdot|}(t)),
$$
where $\env_{a \lambda |\cdot|}$ is the Moreau envelope of $|\cdot|$ with index $a\lambda$ (see next section). Clearly, the MCP penalty can be considered as a variation of the $\ell_1$ penalty function, that is, the absolution function $|\cdot|$ is replaced by $|\cdot|-\env_{a \lambda |\cdot|}$. From this simple observation, we are drawn to consider a family of penalty functions defined by
$$
f-\env_{\alpha f}
$$
with $f$ satisfying some proper properties and $\alpha>0$.

The goal of this paper is to have a comprehensive study on mathematical properties of this family of functions, particularly their proximity operators that are closely related to selection features when adopted in \eqref{model:regression}.


\section{Sparsity Promoting Functions: Definition} \label{sec:CSPF}
In this section, we provide a formal definition of sparsity promoting and characterize convex sparsity promoting functions. We begin by collecting the necessary definitions and facts from convex analysis.

All functions in this work are defined on $\mathbb{R}^n$, Euclidean space equipped with the standard inner product $\langle \cdot, \cdot \rangle$ and the induced Euclidean norm $\|\cdot\|$. We use $\Gamma(\mathbb{R}^n)$ (respectively $\Gamma_0(\mathbb{R}^n)$) to represent the set of proper lower semicontinuous (respectively convex) functions on  $\mathbb{R}^n$. The domain of an operator $A$ (respectively a function $g$) is denoted $\mathrm{dom} (A)$ (respectively $\mathrm{dom} (g)$). The boundary of a set $S$ denoted by $\bd(S)$ is the set of points in the closure $\bar{S}$ which are not in the interior $\interior(S)$. The relative interior of a set $S$ denoted by $\ri(S)$ is the interior of $S$ when it is viewed as a subset of the affine space it spans. For any $x \in \mathbb{R}^n$ and any $\delta > 0$, we use $B_\delta(x)$ to denote the open ball centered at $x$ with radius $\delta$. In particular, we are interested in $B_{\|x\|}(x) = \{ u : \|u -x\| < \|x\|\}.$  For a real number $a$,, the signum function $\sign(a)$ is defined as 
$$
\sign(a)=\left\{
           \begin{array}{ll}
             -1, & \hbox{if $a<0$;} \\
             0, & \hbox{if $a=0$;} \\
             1, & \hbox{if $a>0$.}
           \end{array}
         \right.
$$


For any $g \in \Gamma(\mathbb{R}^n)$, the  Fr\'{e}chet subdifferential  of $g$ at $x \in \mathrm{dom} (g)$ is the set
\begin{equation*}
\partial g(x) \coloneqq \left\{d \in \mathbb{R}^n : \liminf_{u \to x} \frac{g(u) - g(x) - \langle d, u - x \rangle}{\|u-x\|} \geq 0 \right\}.
\end{equation*}
For any $x \notin \mathrm{dom} (g)$, $\partial g(x) = \emptyset$. If $\partial g(x)$ is single-valued, $\partial g(x) = \{\nabla g(x)\}$. We leave off the brackets when there is no risk of confusion. If $g \in \Gamma_0(\mathbb{R}^n)$, the above subdifferential reduces to the usual one
$$
\partial g(x)=\{d \in \mathbb{R}^n: g(y)\ge g(x) + \langle d, y-x\rangle, \forall y \in \mathbb{R}^n\}.
$$
If $g \in \Gamma_0(\mathbb{R}^n)$, then $\partial g$ is a monotone operator; that is, for any $x, y \in \dom(g)$, $d \in \partial g(x)$, and $\dbar \in \partial g(y)$,  $\langle \dbar - d, y - x \rangle \geq 0.$

For a function $g$ in $\Gamma(\mathbb{R}^n)$, the Moreau envelope of $f$ with parameter $\alpha$, denoted by $\mathrm{env}_{\alpha g}$, is
$$
\mathrm{env}_{\alpha g}(x)=\inf_{u \in \mathbb{R}^n} \left\{g(u)+\frac{1}{2\alpha}\|u-x\|^2\right\}.
$$
The associated proximity operator of $g$ with parameter $\alpha$ at $x$ is the set of all points at which the above infimum is attained, denoted by $\prox_{\alpha g}(x)$:
$$
\prox_{\alpha g}(x)=\argmin_{u \in \mathbb{R}^n} \left\{g(u)+\frac{1}{2\alpha}\|u-x\|^2\right\}.
$$
When $\prox_{\alpha g} (x) \neq \emptyset$, $\env_{\alpha g}(x) = g(p) + \frac{1}{2\alpha}\|p - x\|^2$ for all $p \in \prox_{\alpha g}(x)$.

Recall that for a proper function $g$ on $\mathbb{R}^n$, the Fenchel conjugate $g^*$ is defined as
$$
g^*(x)=\sup_{u \in \mathbb{R}^n}\{\langle u, x \rangle - g(u)\}.
$$
The Fenchel conjugate is closely related to the Moreau envelope. Indeed, it is shown in \cite{Bauschke-Combettes:11} that for any $x \in \mathbb{R}^n$ and $\alpha>0$,
\begin{equation}\label{eq:Fenchel-Envelope}
\left(g+\frac{1}{2\alpha}\|\cdot\|^2\right)^*(\alpha^{-1} x)= \left(-\env_{\alpha g }+ \frac{1}{2\alpha}\|\cdot\|^2\right)(x).
\end{equation}

We now rigorously define what is meant by sparsity promoting and discuss how this captures the behavior described in the previous section.

\begin{definition}\label{def:SPF}
Let $f \in \Gamma(\mathbb{R}^n)$. Then $f$ is said to be a sparsity promoting function provided that (i) $f(0)=0$ and $f$ achieves its global minimum at the origin; and (ii) the set $\partial f(0)$ contains at least one nonzero element.
\end{definition}

From the above definition, if $f \in \Gamma(\mathbb{R}^n)$ is a sparsity promoting function, then by Fermat's rule $0 \in \partial f(0)$ and $f$ must be nondifferentiable at the origin. As pointed out in \cite{Fan-Li:JASA:01},  the non-differentiability of $f$ at the origin is necessary for $f$ to be a suitable penalty in \eqref{model:regression} for variable selection.

One typical sparsity promoting function is the absolute value function on $\mathbb{R}$. The global minimum is $|0| = 0$, and $\partial | \cdot | (0) = [-1, 1]$. We will return to this example throughout to illustrate various properties and connect them to our motivating example MCP. Another example of a sparsity promoting function is the indicator function $\iota_C$ that is defined by
$$
\iota_C(x) \coloneqq \begin{cases} 0, & \text{if } x \in C;\\
+ \infty, & \text{ otherwise,} \end{cases}
$$
where $C$ is a closed, convex set such that $0 \in \bd C$ and $\{0\} \subsetneq C$. For further discussion of this example, we refer to Section \ref{sec:Special}. Beyond these examples, the following result shows that every norm $\vertiii{\cdot}$ on $\mathbb{R}^n$ is a sparsity promoting function.
\begin{proposition}
Let $\vertiii{\cdot}$  be a norm on $\mathbb{R}^n$. Then the norm $\vertiii{\cdot}$ is a sparsity promoting function.
\end{proposition}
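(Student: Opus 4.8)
The plan is to verify the two conditions of Definition~\ref{def:SPF} directly from the norm axioms. First I would observe that a norm $\vertiii{\cdot}$ is finite-valued and convex on all of $\mathbb{R}^n$, hence continuous and in particular lower semicontinuous and proper; thus $\vertiii{\cdot} \in \Gamma_0(\mathbb{R}^n) \subseteq \Gamma(\mathbb{R}^n)$, and the Fr\'echet subdifferential at the origin coincides with the usual convex subdifferential. Condition (i) is then immediate: positive homogeneity gives $\vertiii{0} = 0$, while positive definiteness gives $\vertiii{x} > 0$ for every $x \neq 0$, so the origin is the unique global minimizer and the minimum value is $0$.

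The substance lies in condition (ii), namely that $\partial \vertiii{\cdot}(0)$ contains a nonzero element. Since $\vertiii{\cdot}$ is convex, the subdifferential at the origin is
$$
\partial \vertiii{\cdot}(0) = \{ d \in \mathbb{R}^n : \langle d, y \rangle \le \vertiii{y} \text{ for all } y \in \mathbb{R}^n \},
$$
which is exactly the unit ball of the dual norm $\vertiii{\cdot}_*$. It therefore suffices to produce a single nonzero $d$ obeying this bound. Here I would exploit the equivalence of norms on $\mathbb{R}^n$: there exists a constant $c > 0$ with $\vertiii{y} \ge c\|y\|$ for all $y$. Then for any $d$ with $\|d\| \le c$, the Cauchy--Schwarz inequality yields $\langle d, y \rangle \le \|d\|\,\|y\| \le c\|y\| \le \vertiii{y}$, so every such $d$ belongs to $\partial \vertiii{\cdot}(0)$. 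Choosing any nonzero $d$ of Euclidean norm at most $c$ then completes the verification.

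The only step requiring care, and the natural candidate for the main obstacle, is the production of this nonzero subgradient. The argument above deliberately routes around any appeal to Hahn--Banach by using norm equivalence, which in finite dimensions is elementary; alternatively one could fix any $x \neq 0$ and take a supporting functional $d$ with $\vertiii{d}_* = 1$ and $\langle d, x \rangle = \vertiii{x}$, which automatically lies in the dual unit ball and is nonzero. Either route in fact shows that $\partial \vertiii{\cdot}(0)$ contains a Euclidean ball about the origin, so (ii) holds with room to spare.
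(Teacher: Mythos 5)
Your proof is correct and follows essentially the same route as the paper: both verify the minimum at the origin from the norm axioms and identify $\partial \vertiii{\cdot}(0)$ as the dual-norm unit ball. The only difference is that you explicitly exhibit a nonzero subgradient via norm equivalence and Cauchy--Schwarz, whereas the paper simply asserts that the dual unit ball contains nonzero elements; this is a welcome but minor elaboration, not a different argument.
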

\begin{proof}\ \ It is obvious that the norm $\vertiii{\cdot}$ is convex and $0=\vertiii{0}=\min_{x \in \mathbb{R}^n}\vertiii{x}$. We further know that
$$
\partial \vertiii{\cdot}(0) = \{s \in \mathbb{R}^n: \max_{\vertiii{u} \le 1} \langle s, u \rangle \le 1\},
$$
which is the unit ball associated with the dual norm of $\vertiii{\cdot}$ (see, e.g. \cite{Hiriart-Lemar:BOOK:93}). The result of this proposition follows.
\end{proof}

It is well known that the relationship between the subdifferential and proximity operator of a function $f \in \Gamma_0(\mathbb{R}^n)$ is characterized as follows (see, e.g., \cite{Bauschke-Combettes:11,Micchelli-Shen-Xu:IP-11}): for any $\alpha >0$
\begin{equation}\label{eq:diff-prox}
x \in \alpha \partial f (y) \Leftrightarrow y = \prox_{\alpha f} (x+y).
\end{equation}
From this relationship, we get the following characterization of convex sparsity promoting functions.

\begin{lemma}\label{lemma:convex-prox-sparsity}
Let $f\in \Gamma_0(\mathbb{R}^n)$ be a sparsity promoting function and let $\alpha > 0$. Then the following statements hold.
\begin{enumerate}[{\normalfont (i)}]
\item If $x \in \alpha \partial f(0)$, then $\prox_{\alpha f}(x)=0$.
\item For all $x \in \dom(f)$, $\|\prox_{\alpha f}(x)\| \le \|x\|$.
\end{enumerate}
\end{lemma}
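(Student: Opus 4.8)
The plan is to treat the two parts separately, relying on the subdifferential--proximity correspondence \eqref{eq:diff-prox}, Fermat's rule, and the monotonicity of $\partial f$ for $f \in \Gamma_0(\mathbb{R}^n)$.

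For part (i), I would apply \eqref{eq:diff-prox} directly with $y = 0$. The equivalence then reads $x \in \alpha\partial f(0) \Leftrightarrow 0 = \prox_{\alpha f}(x+0)$, so the hypothesis $x \in \alpha\partial f(0)$ immediately yields $\prox_{\alpha f}(x) = 0$, with no further work required.

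For part (ii), since $f \in \Gamma_0(\mathbb{R}^n)$ the objective $f(\cdot) + \frac{1}{2\alpha}\|\cdot - x\|^2$ is strongly convex, so $\prox_{\alpha f}(x)$ is a single point; write $p := \prox_{\alpha f}(x)$. First I would extract the optimality condition characterizing $p$: applying \eqref{eq:diff-prox} with base point $y = p$ and taking $x - p$ in the role of the left-hand element gives $p = \prox_{\alpha f}((x-p)+p) = \prox_{\alpha f}(x)$, hence $\frac{x-p}{\alpha} \in \partial f(p)$. Next, because $f$ attains its global minimum at the origin, Fermat's rule (noted just after Definition~\ref{def:SPF}) gives $0 \in \partial f(0)$. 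I would then invoke monotonicity of $\partial f$ on the two (point, subgradient) pairs $(0,0)$ and $\left(p,\frac{x-p}{\alpha}\right)$, which yields $\left\langle \frac{x-p}{\alpha} - 0,\; p - 0\right\rangle \ge 0$; multiplying by $\alpha > 0$ gives $\langle x - p, p\rangle \ge 0$, i.e. $\|p\|^2 \le \langle x, p\rangle$. Finally, Cauchy--Schwarz bounds $\langle x, p\rangle \le \|x\|\,\|p\|$, so $\|p\|^2 \le \|x\|\,\|p\|$; dividing by $\|p\|$ when $p \neq 0$ (the bound being trivial when $p = 0$) delivers $\|\prox_{\alpha f}(x)\| = \|p\| \le \|x\|$.

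I do not expect a genuine obstacle here; the one step requiring care is the translation of \eqref{eq:diff-prox} into the optimality condition $\frac{x-p}{\alpha} \in \partial f(p)$, since it demands the correct substitution of base point and argument into the equivalence. An alternative route avoiding monotonicity would first observe that $\prox_{\alpha f}(0) = 0$ (immediate from $f(u) \ge f(0) = 0$ together with $\frac{1}{2\alpha}\|u\|^2 \ge 0$) and then apply the nonexpansiveness of the proximity operator of a convex function to get $\|\prox_{\alpha f}(x) - \prox_{\alpha f}(0)\| \le \|x\|$. I prefer the monotonicity argument, as monotonicity of $\partial f$ is the tool explicitly recorded in the preceding text.
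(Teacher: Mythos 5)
Your proof is correct. Part (i) is handled exactly as in the paper, as an immediate consequence of \eqref{eq:diff-prox}. For part (ii) you take a genuinely different route: the paper simply notes that $\prox_{\alpha f}(0)=0$ (via item (i) and $0\in\alpha\partial f(0)$) and then invokes the nonexpansiveness of the proximity operator of a convex function, giving $\|\prox_{\alpha f}(x)\|=\|\prox_{\alpha f}(x)-\prox_{\alpha f}(0)\|\le\|x\|$ in one line. You instead unpack the optimality condition $\frac{x-p}{\alpha}\in\partial f(p)$, pair it with $0\in\partial f(0)$ from Fermat's rule, and use monotonicity of $\partial f$ plus Cauchy--Schwarz to get $\|p\|^2\le\langle x,p\rangle\le\|x\|\,\|p\|$. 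Your argument is essentially a self-contained re-derivation of (a weak form of) the firm nonexpansiveness that the paper cites as a black box: it is slightly longer but uses only the tools explicitly recorded in the surrounding text, whereas the paper's version is shorter at the cost of importing a standard fact from \cite{Bauschke-Combettes:11}. You even note the paper's route as your alternative, so there is no gap; either argument is acceptable.
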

\begin{proof}\ \
(i): This is a direct consequence of equation~\eqref{eq:diff-prox}.

(ii): Note that $\prox_{\alpha f}(0)=0$ due to $0 \in \alpha \partial f(0)$ and Item (i). Since $\prox_{\alpha f}$ is a nonexpansive operator, then for all $x \in \dom(f)$,
$
\|\prox_{\alpha f}(x)\|=\|\prox_{\alpha f}(x)-\prox_{\alpha f}(0)\| \le \|x-0\|.
$  \end{proof}

It follows from Lemma~\ref{lemma:convex-prox-sparsity} that the proximity operator of a convex sparsity promoting function shrinks all input towards the origin, and all input below a certain threshold are sent to zero. As an example, the proximity operator of $| \cdot |$ is
$\prox_{\alpha |\cdot|}(x)=\mathrm{sign}(x) \max\{|x|-\alpha, 0\}$,
which is the well-known soft thresholding operator in wavelet literature \cite{donoho:biometrika:94}. This very behavior for the $\ell_1$-penalty is described by Tibshirani in the name LASSO: least absolute shrinkage and selection operator \cite{Tibshirani:jrss:96}.

\section{Semiconvex Sparsity Promoting Functions}\label{sec:NSPF}

In this section, we introduce the titular family of semiconvex sparsity promoting functions. For any $f\in \Gamma_0(\mathbb{R}^n)$ and any positive number $\alpha>0$, we define
\begin{equation}\label{def:falpha}\tag{$\mathcal{F}_\alpha$}
f_\alpha(x):= f(x)-\env_{\alpha f}(x).
\end{equation}
Clearly $f_\alpha \in \Gamma(\mathbb{R}^n)$ is the difference of two convex functions. Returning to the example $f(x) = |x|$, we see that $f_\alpha(x) = \min\{|x| - \frac{1}{2\alpha}x^2, \frac{\alpha}{2}\}$. As discussed in the previous section, this is the scaled the minimax concave penalty (MCP) given in \cite{Zhang:AS:2010}.

Sparsity promotion depends entirely on the behavior of a function and its subdifferential at the origin. Since the Moreau envelope of any function $f$ in $\Gamma_0(\mathbb{R}^n)$ is differentiable (see, e.g. \cite{Bauschke-Combettes:11}), the subdifferentials of $f_\alpha$ and $f$ are related as follows (see \cite{Clarke:83}):
\begin{equation}\label{eq:sub-f-falpha}
\partial f_\alpha (x) = \partial f (x) - \nabla \env_{\alpha f}(x).
\end{equation}
Due to this inherent relationship between $\partial f_\alpha$ and $\partial f$, we see immediately that $f_\alpha$ must be sparsity promoting if $f$ is.

\begin{theorem}\label{thm:sparsity}
Let $f \in \Gamma_0(\mathbb{R}^n)$ be a sparsity promoting function. Then the following statements hold:
\begin{itemize}
\item[(i)] For any $\alpha>0$, the function $f_\alpha$ defined by \eqref{def:falpha} is  a sparsity promoting function. Moreover, $\partial f_\alpha(0)=\partial f(0)$;
\item[(ii)] Let $g: x \longmapsto f(-x)$. Then both $g$ and $g_\alpha$ are sparsity promoting. Moreover, $g_\alpha = f_\alpha (-\cdot)$ and $\partial g_\alpha(0)=-\partial f(0)$.
\end{itemize}
\end{theorem}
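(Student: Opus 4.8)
The plan is to verify directly the two conditions of Definition~\ref{def:SPF} for $f_\alpha$ (and then for $g_\alpha$), using the subdifferential identity~\eqref{eq:sub-f-falpha} and elementary properties of the Moreau envelope. Since the excerpt already records $f_\alpha \in \Gamma(\mathbb{R}^n)$, it remains to check the value/minimum condition and the nonzero-subgradient condition. For the former in part~(i), the universal inequality $\env_{\alpha f}(x) \le f(x)$ (take $u=x$ in the defining infimum) gives $f_\alpha(x) = f(x) - \env_{\alpha f}(x) \ge 0$ for all $x$. Since $f$ attains its global minimum value $0$ at the origin, $f(u) \ge 0$ for every $u$, so each term in the infimum defining $\env_{\alpha f}$ is nonnegative; hence $\env_{\alpha f} \ge 0$, and evaluating at $u=0$ shows $\env_{\alpha f}(0)=0$. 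Thus $f_\alpha(0) = f(0) - \env_{\alpha f}(0) = 0$ and $f_\alpha \ge 0 = f_\alpha(0)$, so the origin is a global minimizer.

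The crux is the identity $\partial f_\alpha(0) = \partial f(0)$. By~\eqref{eq:sub-f-falpha}, $\partial f_\alpha(0) = \partial f(0) - \nabla\env_{\alpha f}(0)$, so it suffices to show $\nabla\env_{\alpha f}(0) = 0$. I would obtain this directly: $\env_{\alpha f}$ is differentiable on all of $\mathbb{R}^n$, and we have just seen it is nonnegative with $\env_{\alpha f}(0) = 0$, so the origin is a global minimizer of $\env_{\alpha f}$ and Fermat's rule forces $\nabla\env_{\alpha f}(0) = 0$. (Alternatively, use $\nabla\env_{\alpha f}(0) = \alpha^{-1}(0 - \prox_{\alpha f}(0))$ with $\prox_{\alpha f}(0)=0$, the latter from $0 \in \alpha\partial f(0)$ and Lemma~\ref{lemma:convex-prox-sparsity}(i).) Consequently $\partial f_\alpha(0) = \partial f(0)$, which contains a nonzero element because $f$ is sparsity promoting; this yields both the nonzero-subgradient condition and the ``moreover'' claim.

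For part~(ii), I would first check that $g = f(-\cdot)$ is a sparsity promoting function in $\Gamma_0(\mathbb{R}^n)$: convexity and lower semicontinuity are preserved under the reflection $x \mapsto -x$, $g(0) = f(0) = 0$ with $g(x) = f(-x) \ge 0 = g(0)$, and the reflection chain rule gives $\partial g(0) = -\partial f(0)$, which inherits a nonzero element. The identity $g_\alpha = f_\alpha(-\cdot)$ then follows from the change of variables $v = -u$ in the envelope, namely $\env_{\alpha g}(x) = \inf_u\{f(-u) + \frac{1}{2\alpha}\|u-x\|^2\} = \inf_v\{f(v) + \frac{1}{2\alpha}\|v+x\|^2\} = \env_{\alpha f}(-x)$, whence $g_\alpha(x) = f(-x) - \env_{\alpha f}(-x) = f_\alpha(-x)$. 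Applying part~(i) to $g$ then shows $g_\alpha$ is sparsity promoting with $\partial g_\alpha(0) = \partial g(0) = -\partial f(0)$. I do not expect a genuine obstacle here; the single step warranting care is the verification that $\nabla\env_{\alpha f}(0) = 0$ and that~\eqref{eq:sub-f-falpha} is legitimately applied at the origin, both handled cleanly by the nonnegativity-plus-Fermat argument without invoking any fact beyond the excerpt.
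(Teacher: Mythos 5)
Your proposal is correct and follows essentially the same route as the paper: nonnegativity of $f_\alpha$ from $\env_{\alpha f}\le f$, the value $f_\alpha(0)=0$, the subdifferential identity~\eqref{eq:sub-f-falpha} combined with $\nabla\env_{\alpha f}(0)=0$, and part~(ii) by reflection. Your primary justification of $\nabla\env_{\alpha f}(0)=0$ via Fermat's rule on the nonnegative differentiable envelope is a harmless variant of the paper's argument through $\nabla\env_{\alpha f}(0)=\alpha^{-1}(0-\prox_{\alpha f}(0))$, which you also record as an alternative.
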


\begin{proof}\ \
(i): As a direct consequence of the definition of the Moreau envelope, $\env_{\alpha f}(x) \le f(x)$ for all $x \in \mathbb{R}^n$, hence $f_\alpha(x) \ge 0$ for all $x \in \dom(f)$. Since $f$ is a sparsity promoting function, we have $f_\alpha(0)=f(0)-\env_{\alpha f}(0)=0$. Therefore, $\min_{x \in \mathbb{R}^n} f_\alpha(x)=f_\alpha(0)=0$. On the other hand, from \eqref{eq:sub-f-falpha} and the relation $\nabla \env_{\alpha f}(x)=\frac{1}{\alpha} (x-\prox_{\alpha f}(x))$, we have $\partial f_\alpha(0)=\partial f(0)$ which contains at least one nonzero element by assumption. Hence, $f_\alpha$ is sparsity promoting.

(ii): Since $g(0)=f(0)=\min_{x\in \mathbb{R}^n} f(x) = \min_{x\in \mathbb{R}^n} g(x)$ and $\partial g(0)=-\partial f(0)$, so $g$ is sparsity promoting. Hence, by Item (i), $g_\alpha$ is sparsity promoting and $\partial g_\alpha(0)=-\partial f(0)$. By the definition of the Moreau envelope, $\env_{\alpha g}(x)=\env_{\alpha f}(-x)$ which leads to $g_\alpha = f_\alpha (-\cdot)$.
\end{proof}

With Theorem~\ref{thm:sparsity}, we say $f_\alpha$ is a structured sparsity promoting function if $f$ is a convex sparsity promoting function. We now prove   that $f_\alpha$ is semiconvex and show how its semiconvexity depends on the convexity of $f$. We remind the reader of the definition.  A function $g\in \Gamma_0(\mathbb{R}^n)$ is $\sigma$-strongly convex if and only if there exists a constant $\sigma>0$ such that the function $g-\frac{\sigma}{2}\|\cdot\|^2$ is convex.  A function $g \in \Gamma(\mathbb{R}^n)$ is $\rho$-semiconvex if $g+\frac{\rho}{2}\|\cdot\|^2$ is convex.

\begin{proposition}\label{prop:semiconvex}
Let $f$ be a function in $\Gamma_0(\mathbb{R}^n)$. Then $f_\alpha$, defined by \eqref{def:falpha}, is $\frac{1}{\alpha}$-semiconvex. If, in addition, $f$ is $\mu$-strongly convex, then  $f_\alpha$ is $(\mu - \frac{1}{\alpha})$-strongly convex if $\mu > \frac{1}{\alpha}$, convex if $\mu = \frac{1}{\alpha}$, and  $(\frac{1}{\alpha} - \mu)$-semiconvex if $\mu< \frac{1}{\alpha}$.
\end{proposition}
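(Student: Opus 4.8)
The plan is to reduce every assertion to a single structural fact: the function $x \mapsto -\env_{\alpha f}(x) + \frac{1}{2\alpha}\|x\|^2$ is convex. Granting this, both the semiconvexity claim and the three strong-convexity/semiconvexity cases fall out of one algebraic decomposition, so the only genuine work is isolated in that lemma.

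First I would establish the key fact using the Fenchel--envelope identity \eqref{eq:Fenchel-Envelope} with $g = f$. That identity says precisely that $x \mapsto -\env_{\alpha f}(x) + \frac{1}{2\alpha}\|x\|^2$ equals $(f + \frac{1}{2\alpha}\|\cdot\|^2)^*(\alpha^{-1} x)$, i.e. the Fenchel conjugate of a proper function precomposed with the linear map $x \mapsto \alpha^{-1}x$. Since every Fenchel conjugate is convex, being a pointwise supremum of affine functions, and convexity is preserved under precomposition with a linear map, the function $-\env_{\alpha f} + \frac{1}{2\alpha}\|\cdot\|^2$ is convex. (Alternatively, one could invoke that $\env_{\alpha f}$ is convex with $\frac{1}{\alpha}$-Lipschitz gradient and deduce convexity of $\frac{1}{2\alpha}\|\cdot\|^2 - \env_{\alpha f}$ from the standard descent-lemma argument; I would prefer the conjugate route since \eqref{eq:Fenchel-Envelope} is already available.)

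For the first assertion I would simply write
$$
f_\alpha + \frac{1}{2\alpha}\|\cdot\|^2 = f + \left(-\env_{\alpha f} + \frac{1}{2\alpha}\|\cdot\|^2\right),
$$
a sum of the convex function $f$ and the convex function just identified; hence the left-hand side is convex, which is exactly $\frac{1}{\alpha}$-semiconvexity of $f_\alpha$. For the strongly convex case I would set $c := \frac{1}{\alpha} - \mu$ and use the single identity
$$
f_\alpha + \frac{c}{2}\|\cdot\|^2 = \left(f - \frac{\mu}{2}\|\cdot\|^2\right) + \left(-\env_{\alpha f} + \frac{1}{2\alpha}\|\cdot\|^2\right),
$$
whose two summands are convex, the first because $f$ is $\mu$-strongly convex and the second by the key fact. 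Reading off the sign of $c$ then gives all three cases at once: if $\mu > \frac{1}{\alpha}$ then $c<0$ and the identity says $f_\alpha - \frac{\mu - 1/\alpha}{2}\|\cdot\|^2$ is convex, i.e. $f_\alpha$ is $(\mu - \frac{1}{\alpha})$-strongly convex; if $\mu = \frac{1}{\alpha}$ then $c = 0$ and $f_\alpha$ itself is convex; and if $\mu < \frac{1}{\alpha}$ then $c>0$ and $f_\alpha + \frac{1/\alpha - \mu}{2}\|\cdot\|^2$ is convex, i.e. $f_\alpha$ is $(\frac{1}{\alpha} - \mu)$-semiconvex.

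I expect the main (and essentially the only) obstacle to be the key fact; everything afterward is bookkeeping with quadratic terms. The one point demanding a little care is checking that the decomposition is legitimate as an identity of extended-real-valued functions: since $\env_{\alpha f}$ is finite everywhere no $\infty - \infty$ ambiguity arises and the effective domain stays equal to that of $f$, so summing and regrouping the convex pieces is justified.
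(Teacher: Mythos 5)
Your proposal is correct and follows essentially the same route as the paper: both proofs use the identity \eqref{eq:Fenchel-Envelope} to recognize $-\env_{\alpha f}+\frac{1}{2\alpha}\|\cdot\|^2$ as the convex function $\left(f+\frac{1}{2\alpha}\|\cdot\|^2\right)^*(\alpha^{-1}\cdot)$ and then regroup the quadratic terms to read off all cases. Your version merely makes explicit the convexity of the conjugate and the $\infty-\infty$ bookkeeping, which the paper leaves implicit.
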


\begin{proof}\ \ Write
$$
f_\alpha = f-\env_{\alpha f}=f+\left(-\env_{\alpha f}+\frac{1}{2\alpha}\|\cdot\|^2\right)-\frac{1}{2\alpha}\|\cdot\|^2.
$$
By \eqref{eq:Fenchel-Envelope}, for all $x \in \mathbb{R}^n$ we have that
\begin{equation}\label{tmp1:semiconvex}
f_\alpha(x) = f(x) +  \left(f+\frac{1}{2\alpha}\|\cdot\|^2\right)^*(\alpha^{-1} x) - \frac{1}{2\alpha}\|x\|^2,
\end{equation}
which implies that $f_\alpha$ is $\frac{1}{\alpha}$-semiconvex.

In addition, if $f$ is $\mu$-strongly convex, then there exists a convex function $g$ such that $f=g+\frac{\mu}{2}\|\cdot\|^2$. Replacing $f(x)$ in \eqref{tmp1:semiconvex} by $g(x)+\frac{\mu}{2}\|x\|^2$, we have
\begin{equation*}\label{tmp2:semiconvex}
f_\alpha(x) = g(x) +  \left(f+\frac{1}{2\alpha}\|\cdot\|^2\right)^*(\alpha^{-1} x) +\frac{1}{2}(\mu-\frac{1}{\alpha})\|x\|^2.
\end{equation*}
It follows from the above equation that $f_\alpha$ is $(\mu - \frac{1}{\alpha})$-strongly convex if $\mu > \frac{1}{\alpha}$, convex if $\mu = \frac{1}{\alpha}$, and  $(\frac{1}{\alpha} - \mu)$-semiconvex if $\mu< \frac{1}{\alpha}$.
\end{proof}

The following result is a direct consequence of Proposition~\ref{prop:semiconvex}.
\begin{corollary}\label{cor:convexity}
Let $f$ be a function in $\Gamma_0(\mathbb{R}^n)$ and let $f_\alpha$ be defined by \eqref{def:falpha}. For any given $x \in \mathbb{R}^n$ and positive parameters $\alpha$ and $\beta$, we define
\begin{equation}\label{temp:obj}
F(u)= f_\alpha(u)+\frac{1}{2\beta}\|u-x\|^2,
\end{equation}
where  $u \in \mathbb{R}^n$. Then, $F$ is $\left({\beta}^{-1}-{\alpha}^{-1}\right)$-strongly convex if $\beta<\alpha$, convex if $\beta=\alpha$, and $\left({\alpha}^{-1}-{\beta}^{-1}\right)$-semiconvex if $\beta>\alpha$. If, in addition, $f$ is $\mu$-strongly convex, then $F$ is $\left(\mu-{\alpha}^{-1}+{\beta}^{-1}\right)$-strongly convex if $\mu>{\alpha}^{-1}-{\beta}^{-1}$, convex if $\mu={\alpha}^{-1}-{\beta}^{-1}$, and $\left({\alpha}^{-1}-{\beta}^{-1}-\mu\right)$-semiconvex if $\mu<{\alpha}^{-1}-{\beta}^{-1}$.
\end{corollary}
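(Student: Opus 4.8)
The plan is to reduce the entire statement to the single structural fact recorded in Proposition~\ref{prop:semiconvex}, combined with the elementary observation that adding an affine function to a function leaves its strong-convexity, convexity, and semiconvexity moduli unchanged (these notions concern only the quadratic part of the function, so a linear term plus a constant is invisible to them). Once this reduction is in place, the whole corollary becomes a matter of reading off the sign of the coefficient of $\frac{1}{2}\|u\|^2$.

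First I would expand the quadratic in the definition \eqref{temp:obj}, writing
$$
F(u) = f_\alpha(u) + \frac{1}{2\beta}\|u\|^2 - \frac{1}{\beta}\langle u, x\rangle + \frac{1}{2\beta}\|x\|^2.
$$
The last two terms are affine in $u$, so by the invariance noted above it suffices to analyze $u \mapsto f_\alpha(u) + \frac{1}{2\beta}\|u\|^2$. For the first assertion I would invoke Proposition~\ref{prop:semiconvex} to produce a convex $h \in \Gamma_0(\mathbb{R}^n)$ with $f_\alpha = h - \frac{1}{2\alpha}\|\cdot\|^2$. Substituting gives
$$
F(u) = h(u) + \tfrac{1}{2}\left(\beta^{-1} - \alpha^{-1}\right)\|u\|^2 + (\text{affine}),
$$
and the three cases follow immediately from the sign of $\beta^{-1} - \alpha^{-1}$: positive (that is, $\beta < \alpha$) gives $(\beta^{-1} - \alpha^{-1})$-strong convexity, zero (that is, $\beta = \alpha$) gives convexity, and negative (that is, $\beta > \alpha$) gives $(\alpha^{-1} - \beta^{-1})$-semiconvexity, each directly from the definitions.

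For the refinement under the extra hypothesis that $f$ is $\mu$-strongly convex, I would first rephrase Proposition~\ref{prop:semiconvex} in the uniform form that $f_\alpha - \tfrac{1}{2}(\mu - \alpha^{-1})\|\cdot\|^2$ is convex, valid irrespective of the sign of $\mu - \alpha^{-1}$; indeed its three listed cases are precisely this single statement, and it is transparent from the displayed identity in that proof, where the conjugate term and the remaining convex summand are manifestly convex. Writing $f_\alpha = \tilde h + \tfrac{1}{2}(\mu - \alpha^{-1})\|\cdot\|^2$ with $\tilde h$ convex and substituting as before yields
$$
F(u) = \tilde h(u) + \tfrac{1}{2}\left(\mu - \alpha^{-1} + \beta^{-1}\right)\|u\|^2 + (\text{affine}),
$$
so the conclusion is again read off from the sign of $\mu - (\alpha^{-1} - \beta^{-1})$, giving strong convexity, convexity, or semiconvexity with exactly the stated moduli.

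Since the argument is essentially bookkeeping, I do not anticipate a genuine obstacle. The only points that require care are to confirm that affine perturbations preserve each of the three moduli, so that the constant $\frac{1}{2\beta}\|x\|^2$ and the linear term $-\frac{1}{\beta}\langle u, x\rangle$ may be dropped, and to route the $\mu$-strongly convex case through the single convex representative $\tilde h$ rather than splitting into subcases according to the sign of $\mu - \alpha^{-1}$; handling it through the uniform convex representative is what keeps the case analysis clean and makes the corollary a genuinely direct consequence of Proposition~\ref{prop:semiconvex}.
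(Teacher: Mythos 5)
Your proposal is correct and matches the paper's intent exactly: the paper gives no separate proof of Corollary~\ref{cor:convexity}, declaring it ``a direct consequence of Proposition~\ref{prop:semiconvex},'' and your argument---extracting a convex representative of $f_\alpha$ from that proposition, absorbing the affine part of $\frac{1}{2\beta}\|u-x\|^2$, and reading off the sign of the resulting quadratic coefficient---is precisely the intended direct-consequence reasoning. The uniform reformulation of the $\mu$-strongly convex case via a single convex representative $\tilde h$ is a clean way to organize the bookkeeping and is fully consistent with the paper's definitions of strong convexity and semiconvexity.
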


As for convex sparsity promoting functions, we can further characterize the sparsity promotion of $f_\alpha$ by examining its proximity operator. Roughly speaking, we show that  $\prox_{\beta f_\alpha}(x)=0$ for all $x \in \min\{\alpha, \beta\} \cdot \partial f (0)$. Towards this end, we present two technical lemmas. The first is a generalization of Lemma \ref{lemma:convex-prox-sparsity}.


\begin{lemma}\label{lem:sparsity_technical2} Let $f \in \Gamma_0(\mathbb{R}^n)$ be sparsity promoting and $f_\alpha$ as defined in \eqref{def:falpha}.
\begin{enumerate}[{\normalfont (i)}] \item For any $x \in \dom(f)$,  $\prox_{\beta f_\alpha}(x) \subseteq \overline{B_{\|x\|}}(x)$.
\item If $x \in \min\{\alpha, \beta\}\cdot \partial f(0)$, then $0 \in \prox_{\beta f_\alpha}(x)$.
\end{enumerate}
\end{lemma}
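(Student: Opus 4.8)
The plan is to treat the two items separately, using only the nonnegativity of $f_\alpha$ (Theorem~\ref{thm:sparsity}), the convexity of $f$, and the elementary upper bound on the Moreau envelope obtained by evaluating its defining infimum at the single point $0$. Throughout I write $F(u):=f_\alpha(u)+\frac{1}{2\beta}\|u-x\|^2$, so that $\prox_{\beta f_\alpha}(x)=\argmin_u F(u)$, and I note at the outset that $F(0)=f_\alpha(0)+\frac{1}{2\beta}\|x\|^2=\frac{1}{2\beta}\|x\|^2$ since $f_\alpha(0)=0$.

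For item (i) I would argue directly from the fact that $0$ is a feasible competitor. Let $p\in\prox_{\beta f_\alpha}(x)$ (if the set is empty the containment is vacuous). Since $p$ minimizes $F$, we have $F(p)\le F(0)=\frac{1}{2\beta}\|x\|^2$, while $f_\alpha(p)\ge 0$ by Theorem~\ref{thm:sparsity} gives $\frac{1}{2\beta}\|p-x\|^2\le F(p)$. Chaining these yields $\|p-x\|\le\|x\|$, i.e. $p\in\overline{B_{\|x\|}}(x)$.

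For item (ii), set $\gamma:=\min\{\alpha,\beta\}$ and assume $x\in\gamma\,\partial f(0)$, so $\gamma^{-1}x\in\partial f(0)$. The key step I would establish is the global quadratic minorant
\begin{equation*}
f_\alpha(u)=f(u)-\env_{\alpha f}(u)\ \ge\ \tfrac{1}{\gamma}\langle x,u\rangle-\tfrac{1}{2\alpha}\|u\|^2,
\end{equation*}
which follows by combining two one-line estimates: convexity of $f$ with $\gamma^{-1}x\in\partial f(0)$ and $f(0)=0$ gives $f(u)\ge\frac{1}{\gamma}\langle x,u\rangle$, and evaluating the infimum defining $\env_{\alpha f}(u)$ at the feasible point $0$ gives $\env_{\alpha f}(u)\le\frac{1}{2\alpha}\|u\|^2$. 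Proving $0\in\prox_{\beta f_\alpha}(x)$ then reduces to showing $F(u)\ge F(0)$ for all $u$, and substituting the minorant turns this into
\begin{equation*}
\Big(\tfrac{1}{\gamma}-\tfrac{1}{\beta}\Big)\langle x,u\rangle+\Big(\tfrac{1}{2\beta}-\tfrac{1}{2\alpha}\Big)\|u\|^2\ \ge\ 0 .
\end{equation*}

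When $\beta\le\alpha$ we have $\gamma=\beta$, so the first coefficient vanishes and the second is nonnegative; the inequality then holds for every $u$ and $0$ is a global minimizer outright. The hard part is the regime $\beta>\alpha$, where $\gamma=\alpha$, the function $F$ is merely $(\alpha^{-1}-\beta^{-1})$-semiconvex by Corollary~\ref{cor:convexity}, and the reduced inequality collapses to $(\alpha^{-1}-\beta^{-1})\big(\langle x,u\rangle-\frac12\|u\|^2\big)\ge 0$, which is equivalent to $\|u-x\|\le\|x\|$ and so only delivers $F(u)\ge F(0)$ on $\overline{B_{\|x\|}}(x)$. To close this case I would invoke item (i): because $f_\alpha\ge0$ forces $F$ to be coercive and $f_\alpha\in\Gamma(\mathbb{R}^n)$ makes $F$ lower semicontinuous, $\prox_{\beta f_\alpha}(x)$ is nonempty, and by item (i) any minimizer $p$ lies in $\overline{B_{\|x\|}}(x)$, where the bound gives $F(p)\ge F(0)$; combined with $F(p)\le F(0)$ this forces $F(0)=\min F$, i.e. $0\in\prox_{\beta f_\alpha}(x)$. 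The one subtlety to emphasize is that in the semiconvex regime first-order optimality at $0$ is not sufficient for global minimality, which is precisely why the ball estimate from item (i), rather than a direct subdifferential computation, is what finishes the argument.
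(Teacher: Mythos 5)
Your proof is correct and follows essentially the same route as the paper's: the same quadratic minorant $f_\alpha(u)\ge \frac{1}{\min\{\alpha,\beta\}}\langle x,u\rangle-\frac{1}{2\alpha}\|u\|^2$ obtained from the subgradient inequality and the trivial envelope bound, the same case split on $\min\{\alpha,\beta\}$, and the same use of the ball estimate from item (i) to close the case $\beta>\alpha$. The only (harmless) difference is at the very end of that case: the paper simply observes that $F(u)>F(0)$ off the closed ball because $f_\alpha\ge 0$, so $F\ge F(0)$ everywhere and $0\in\argmin F$ directly, whereas your detour through coercivity and lower semicontinuity to secure nonemptiness of $\prox_{\beta f_\alpha}(x)$ is valid but not needed.
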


\begin{proof}\ \ For a fixed $x \in \mathbb{R}^n$, define $F$ as in \eqref{temp:obj}, so that $\prox_{\beta f_\alpha}(x) = \argmin_{u \in \mathbb{R}^n} F(u).$

(i):  Since $F(0) = \frac{1}{2\beta} \|x\|^2$ and $0 \in \overline{B_{\|x\|}}(x)$, to show $\prox_{\beta f_\alpha}(x) \subseteq \overline{B_{\|x\|}}(x)$, we only need to show that for all $u \in \mathbb{R}^n \backslash \overline{B_{\|x\|}}(x)$, $F(u) > F(0)$. Actually, if $u \in \mathbb{R}^n \backslash \overline{B_{\|x\|}}(x)$, then $\|u-x\|^2 > \|x\|^2$. Since $f_\alpha$ is non-negative, it follows from \eqref{temp:obj} that
$F(u) > \frac{1}{2\beta}\|x\|^2 = F(0)$. Thus the conclusion of Item (i) holds.

(ii): To prove Item (ii), from Item (i) and $F(0) = \frac{1}{2\beta}\|x\|^2$, it suffices to show $F(u) \geq \frac{1}{2\beta}\|x\|^2$ for all $u \in \overline{B_{\|x\|}}(x)$. From the assumption of $x \in \min\{\alpha, \beta\} \cdot \partial f(0)$, we have that  for all $u \in \mathbb{R}^n$,
$
f(u) \geq \frac{1}{\min\{\alpha, \beta\}}\langle x, u \rangle.
$
Since $f(0) = 0$, we have $\env_{\alpha f}(u) \leq \frac{1}{2\alpha}\|u\|^2$ for all $u \in \mathbb{R}^n$.  Hence
$$
f_\alpha(u) \geq \frac{1}{\min\{\alpha, \beta\}}\langle x, u \rangle - \frac{1}{2\alpha}\|u\|^2.
$$
Therefore
\begin{align*}
F(u) &\geq\frac{1}{\min\{\alpha, \beta\}}\langle x, u \rangle - \frac{1}{2\alpha}\|u\|^2 + \frac{1}{2\beta}\|u-x\|^2 \\
&= \begin{cases} \big(\frac{1}{2\beta} - \frac{1}{2\alpha}\big)\|u\|^2 + \frac{1}{2\beta}\|x\|^2 , & \text{if } \beta \leq \alpha;\\
\big(\frac{1}{2\alpha} - \frac{1}{2\beta}\big)(\|x\|^2 - \|u-x\|^2) + \frac{1}{2\beta}\|x\|^2 , & \text{if } \alpha < \beta.
\end{cases}
\end{align*}
So, $F(u) \geq \frac{1}{2\beta}\|x\|^2 = F(0)$ holds for all $u \in \overline{B_{\|x\|}}(x)$. This completes the proof of the lemma.
\end{proof}

\begin{remark} From item (i) of Lemma \ref{lem:sparsity_technical2} we see for $x \in \mathbb{R}$, $\sign(x) = \sign(p)$ if $p \in \prox_{\beta f_\alpha}(x)$ and both $x$ and $p$ are simultaneously nonzero. We note that this is also true for $\prox_{\alpha f}(x)$. \end{remark}

The following technical lemma will greatly simplify the proof of Theorem~\ref{thm:major}, our main result.  While the lemma may seem strange at first glance, the conditions therein arise naturally from the computation of the proximity operator.
\begin{lemma}\label{lem:sparsity_technical1}
Let $f \in \Gamma_0(\mathbb{R}^n)$ be a sparsity promoting function and $w \in \dom (\partial f)$. If $w \in \partial f(0)$ and there exists a nonzero $\xi \in \ri (\partial f(0)) \cap \partial f(w)$, then $w = 0$.
\end{lemma}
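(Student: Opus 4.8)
The plan is to exploit the monotonicity of $\partial f$ together with the defining prolongation property of the relative interior, so that conjugate-function machinery can be avoided entirely. First I would record the facts already at hand: since $f$ is sparsity promoting, Fermat's rule gives $0 \in \partial f(0)$; since $f \in \Gamma_0(\mathbb{R}^n)$, the set $M \coloneqq \partial f(0)$ is convex and $\partial f$ is a monotone operator; and by hypothesis $w \in M$ while $\xi \in \partial f(w)$. The goal $w = 0$ will follow once I show that $w$ is orthogonal to everything in $M$, because $w$ itself belongs to $M$.

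The central step is to reinterpret the hypotheses through monotonicity. Applying monotonicity to the pairs $(0, \xi_0)$ and $(w, \xi)$, where $\xi_0$ is an arbitrary element of $\partial f(0)$ and $\xi \in \partial f(w)$, yields $\langle \xi - \xi_0, w - 0\rangle \ge 0$, that is, $\langle \xi_0, w\rangle \le \langle \xi, w\rangle$ for every $\xi_0 \in M$. In other words, the linear functional $\phi(\cdot) \coloneqq \langle \cdot, w\rangle$ attains its maximum over the convex set $M$ at the point $\xi$.

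This is where the relative-interior hypothesis enters, and I expect it to be the crux of the argument. Because $\xi \in \ri(M)$, the prolongation characterization of the relative interior guarantees that for each $\xi_1 \in M$ there is a scalar $\mu > 1$ with $\mu\xi - (\mu-1)\xi_1 \in M$. If $\phi(\xi_1)$ were strictly smaller than $\phi(\xi)$ for some $\xi_1 \in M$, then linearity would give $\phi(\mu\xi - (\mu-1)\xi_1) = \phi(\xi) + (\mu-1)\big(\phi(\xi) - \phi(\xi_1)\big) > \phi(\xi)$, contradicting that $\xi$ maximizes $\phi$ over $M$. Hence $\phi$ must be constant on $M$; the conceptual content of the lemma is precisely that a relative-interior maximizer of a linear functional forces that functional to be constant on the set.

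Finally I would evaluate this constant and close. Since $0 \in M$, the common value of $\phi$ on $M$ equals $\phi(0) = 0$, so $\langle \xi_0, w\rangle = 0$ for every $\xi_0 \in M$. As $w$ lies in $M$, taking $\xi_0 = w$ gives $\|w\|^2 = \langle w, w\rangle = 0$, whence $w = 0$. I note that this argument never actually uses $\xi \neq 0$; the nonzero requirement in the statement appears to be inherited from the setting in which the lemma will be applied rather than being needed for the conclusion itself.
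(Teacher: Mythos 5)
Your proof is correct, and it takes a genuinely cleaner route than the paper's. The paper argues by contradiction at the level of function values: from $w \in \partial f(0)$ it gets $f(w) \ge \|w\|^2 > 0$, from the Fenchel equality $f(w)+f^*(\xi)=\langle \xi, w\rangle$ (applied at both $0$ and $w$) it gets $f(w)=\langle \xi,w\rangle$, and then it prolongs $\xi$ past itself along the ray from $0$ (using $0\in\partial f(0)$) to find $\lambda>1$ with $\lambda\xi\in\partial f(0)$, whence $f(w)\ge\lambda f(w)$ and $f(w)\le 0$, a contradiction. You dispense with the conjugate and with function values entirely and work only with $\partial f$ as a monotone operator: monotonicity says $\xi$ maximizes $\langle\cdot,w\rangle$ over $\partial f(0)$, the prolongation characterization of the relative interior forces a linear functional maximized at a relative-interior point to be constant on the set, $0\in\partial f(0)$ pins that constant at $0$, and $w\in\partial f(0)$ then gives $\|w\|^2=0$. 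Both proofs share the same engine (monotonicity plus prolongation of $\ri$), but yours is direct rather than by contradiction, isolates the conceptual point more transparently, and generalizes verbatim to any maximal monotone operator with $0\in A(0)$ in place of $\partial f$. Your closing observation is also accurate and applies equally to the paper's proof: the hypothesis $\xi\neq 0$ is never used (if $\xi=0$ the paper's identity $f(w)=\langle\xi,w\rangle=0$ already contradicts $f(w)>0$), so it is indeed an artifact of how the lemma is invoked in Theorem~\ref{thm:major} rather than a logical necessity.
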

\begin{proof}\ \ Assume that $w \neq 0$. First, since $w \in \partial f(0)$ and $f(0) = 0$, we have $f(w) \geq \|w\|^2 > 0.$

Second, since $\xi \in \partial f(0)) \cap \partial f(w)$, then $\xi \in \partial f(0)$ implies $f(0)+f^*(\xi)=\langle 0, \xi \rangle$ while $\xi \in \partial f(w)$ implies $f(w)+f^*(\xi)=\langle \xi, w \rangle$.  Hence,
\begin{equation}\label{temp:f2}
f(w) = \langle \xi, w\rangle.
\end{equation}
By the monotonicity of $\partial f$, for any $\eta \in \partial f(0)$, $\langle \xi - \eta, w\rangle \geq 0.$ Together with \eqref{temp:f2} we get
\begin{equation}\label{temp:f3}
f(w) \geq \langle \eta, w \rangle.
\end{equation}

Finally, since $\xi \in \ri(\partial f(0))$ and $\partial f(0)$ is convex, there exists $\lambda > 1$ such that $\lambda \xi \in \partial f(0)$. By \eqref{temp:f2} and \eqref{temp:f3}, we get
$$f(w) \geq \langle \lambda \xi, w \rangle = \lambda f(w)$$
which implies $f(w) \leq 0$. This is a contradiction, so $w = 0$.

\end{proof}

Now for our main result which characterizes the sparsity promoting structure of $f_\alpha$ in terms of the sparsity threshold of its proximity operator.

\begin{theorem}\label{thm:major}
Let $f \in \Gamma_0(\mathbb{R}^n)$ be a sparsity promoting function. For any $x \in \dom(f)$, the following statements hold:
\begin{enumerate}[{\normalfont (i)}]
  \item If $\beta < \alpha$, then $\prox_{\beta f_\alpha}(x)=0$ for $x\in \beta \partial f(0)$;
  \item If $\beta = \alpha$, then $\prox_{\beta f_\alpha}(x)=0$ for $x\in \mathrm{ri}(\alpha \partial f(0))$;
  \item If $\beta > \alpha$, then $\prox_{\beta f_\alpha}(x)=0$ for $x\in \alpha \partial f(0)$.
\end{enumerate}
\end{theorem}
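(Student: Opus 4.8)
The plan is to show, in each regime, that the origin is the \emph{unique} minimizer of the function $F(u)=f_\alpha(u)+\frac{1}{2\beta}\|u-x\|^2$ from \eqref{temp:obj}, so that $\prox_{\beta f_\alpha}(x)=\{0\}$. Two facts already do most of the bookkeeping. First, the stated thresholds all give $x\in\min\{\alpha,\beta\}\cdot\partial f(0)$ (using $\ri(\alpha\partial f(0))\subseteq\alpha\partial f(0)$ in case (ii)), so Lemma~\ref{lem:sparsity_technical2}(ii) guarantees $0\in\prox_{\beta f_\alpha}(x)$; thus $0$ is a minimizer and $F(0)=\frac{1}{2\beta}\|x\|^2$. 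Second, Lemma~\ref{lem:sparsity_technical2}(i) confines every minimizer $v$ to $\overline{B_{\|x\|}}(x)$. The whole argument then reduces to analysing the \emph{equality case} of the chain of inequalities used to prove Lemma~\ref{lem:sparsity_technical2}(ii): any minimizer $v$ satisfies $F(v)=F(0)$, so every inequality in that chain must be tight at $v$.

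First I would read off what tightness forces. The chain rests on two bounds: $f(v)\ge\frac{1}{\min\{\alpha,\beta\}}\langle x,v\rangle$ (from $x\in\min\{\alpha,\beta\}\cdot\partial f(0)$ and $f(0)=0$) and $\env_{\alpha f}(v)\le\frac{1}{2\alpha}\|v\|^2$ (from $f(0)=0$). Equality in the first is exactly the statement that $\tfrac{x}{\min\{\alpha,\beta\}}\in\partial f(v)$, and equality in the second says the infimum defining $\env_{\alpha f}(v)$ is attained at $0$, i.e. $\prox_{\alpha f}(v)=0$, equivalently $\tfrac v\alpha\in\partial f(0)$ via \eqref{eq:diff-prox}. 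These two inclusions are the engine of the proof.

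In case (i) ($\beta<\alpha$) no further work is needed: the leftover quadratic carries the strictly positive coefficient $\tfrac{1}{2\beta}-\tfrac{1}{2\alpha}$, so the chain is strict for $v\neq0$ and $v=0$ follows at once (equivalently, $F$ is strongly convex by Corollary~\ref{cor:convexity}). In case (iii) ($\beta>\alpha$) tightness additionally forces the remainder $(\tfrac1{2\alpha}-\tfrac1{2\beta})(\|x\|^2-\|v-x\|^2)$ to vanish, i.e. $\|v-x\|=\|x\|$, hence $\|v\|^2=2\langle x,v\rangle$. Combining the two inclusions (here $\min\{\alpha,\beta\}=\alpha$): $\tfrac v\alpha\in\partial f(0)$ gives $f(v)\ge\tfrac1\alpha\|v\|^2$, while equality in the first bound gives $f(v)=\tfrac1\alpha\langle x,v\rangle$, so $\langle x,v\rangle\ge\|v\|^2$; together with $\|v\|^2=2\langle x,v\rangle$ this forces $\|v\|^2\le0$, i.e. $v=0$.

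The delicate case, and the main obstacle, is (ii) ($\beta=\alpha$). There the quadratic coefficient is exactly zero, so there is no strict term and no extra norm identity to exploit, and the two inclusions $x\in\alpha\partial f(v)$ and $\tfrac v\alpha\in\partial f(0)$ by themselves do not force $v=0$. This is precisely where the relative-interior hypothesis $x\in\ri(\alpha\partial f(0))$ and Lemma~\ref{lem:sparsity_technical1} enter: applying that lemma to the (still sparsity promoting) function $\alpha f$ with base point $v$, its hypotheses ``$v\in\partial(\alpha f)(0)$'' and ``there is a nonzero $x\in\ri(\partial(\alpha f)(0))\cap\partial(\alpha f)(v)$'' are exactly the two tightness inclusions above together with the relative-interior assumption, and its conclusion is $v=0$. (The degenerate subcase $x=0$ is handled separately, since then $F(u)\ge\tfrac1{2\alpha}\|u\|^2>0=F(0)$ for $u\neq0$.) I expect the bulk of the effort to lie in justifying the equality analysis cleanly and in verifying that the self-referential hypotheses of Lemma~\ref{lem:sparsity_technical1}, which look artificial in isolation, match the two tightness inclusions after rescaling $f$ by $\alpha$; this matching is the crux that lets the nonconvex threshold statement reduce to a convex-analytic fact.
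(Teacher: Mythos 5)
Your proposal is correct, and the overall skeleton matches the paper's: Lemma~\ref{lem:sparsity_technical2} supplies $0\in\prox_{\beta f_\alpha}(x)$ and confines minimizers to $\overline{B_{\|x\|}}(x)$, case (i) follows from strong convexity, and case (ii) terminates in Lemma~\ref{lem:sparsity_technical1} applied to $\alpha f$. Where you genuinely diverge is in how the two hypotheses of Lemma~\ref{lem:sparsity_technical1} are produced in case (ii): the paper invokes Fermat's rule for the nonconvex objective, rewrites $0\in\partial f_\alpha(w^*)+\frac1\beta(w^*-x)$ via \eqref{eq:sub-f-falpha} to get $x-\prox_{\alpha f}(w^*)\in\partial(\alpha f)(w^*)$, and then proves $\prox_{\alpha f}(w^*)=0$ by combining monotonicity of $\partial f$ with nonexpansiveness of $\prox_{\alpha f}$; you instead extract both inclusions $x\in\partial(\alpha f)(w^*)$ and $w^*\in\partial(\alpha f)(0)$ directly from the equality case of the chain of inequalities in the proof of Lemma~\ref{lem:sparsity_technical2}(ii), since the quadratic remainder vanishes identically when $\beta=\alpha$. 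Your route is arguably cleaner and more uniform across the three cases: it avoids the subdifferential sum rule and first-order conditions for the nonconvex $F$ entirely, using only the convex subgradient inequality for $f$ and the attainment characterization of $\env_{\alpha f}(v)=\frac1{2\alpha}\|v\|^2$, and the same tightness bookkeeping then handles case (iii) (your inner-product computation and the paper's observation that $w^*=\prox_{\alpha f}(w^*)=0$ are two equivalent endgames). You also explicitly dispose of the degenerate subcase $x=0$ in (ii), where Lemma~\ref{lem:sparsity_technical1} cannot be invoked because it requires a nonzero $\xi$; the paper passes over this point silently, so your patch is a small but real improvement. The price of your approach is that the equality analysis must be justified carefully (each one-sided bound in the sum must be shown individually tight), but that is routine.
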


\begin{proof} \ \
Given $x \in \mathbb{R}^n$, define $F(u) = f_\alpha(u) + \frac{1}{2\beta}\|u-x\|^2$.

(i) We first consider the situation $\beta<\alpha$. From Corollary~\ref{cor:convexity}, we know that $F$ is $\left(\frac{1}{\beta}-\frac{1}{\alpha}\right)$-strongly convex and therefore has a unique minimizer. By Lemma \ref{lem:sparsity_technical2}, $x \in \beta \partial f(0)$ implies that $0 = \argmin_{u \in \mathbb{R}^n} F(u)$. Together these imply that $\prox_{\beta f_\alpha}(x) = 0$.\\


(ii) Next we consider $\alpha=\beta$. From Corollary \ref{cor:convexity}, $F(u)$ is convex but not strongly, and the minimizer may no longer be unique. By Lemma \ref{lem:sparsity_technical2}, $0 \in \prox_{\beta f_\alpha}(x)$ for $x \in \alpha \partial f(0)$.

Now suppose $x \in \ri(\alpha \partial f(0))$  and let $w^*$ be an element of $\prox_{\beta f_\alpha}(x)$. To show that $w^* = 0$, by identifying $\alpha f $, $x$, and $w^*$, respectively,  as $f$, $\xi$, and $w$ in Lemma \ref{lem:sparsity_technical1}, it suffices to show that $x \in \partial (\alpha f)(w^*)$ and $w^* \in \partial (\alpha f)(0)$. By Fermat's rule, $w^* \in \prox_{\beta f_\alpha}(x)$ implies that $0 \in \partial f_{\alpha}(w^*) + \frac{1}{\beta}(w^* - x)$. As we saw earlier that $\partial f_\alpha (w^*) = \partial f(w^*) - \nabla \env_{\alpha f}(w^*)$ and $\nabla \env_{\alpha f}(w^*) = \frac{1}{\alpha}(w^* - \prox_{\alpha f}(w^*))$, this can be rewritten as
\begin{equation}\label{eq:fermat-2}
\frac{1}{\beta}x +\left(\frac{1}{\alpha}-\frac{1}{\beta}\right)w^* -\frac{1}{\alpha}\prox_{\alpha f}(w^*) \in \partial f(w^*).
\end{equation}
From~\eqref{eq:fermat-2}, we get $x - \prox_{\alpha f}(w^*) \in \partial (\alpha f)(w^*)$. Therefore the conditions $x \in \partial (\alpha f)(w^*)$ and $w^* \in \partial (\alpha f)(0)$ hold if and only if $\prox_{\alpha f}(w^*) = 0$.

Since $x \in \partial (\alpha f)(0)$, by the monotonicity of $\partial f$ we have
\begin{equation*}\label{eq:wprox}
\langle x -  \prox_{\alpha f}(w^*) - x, w^* \rangle  \geq 0.
\end{equation*}
That is, $\langle \prox_{\alpha f}(w^*), w^* \rangle \leq 0$.  But due to the nonexpansiveness of $\prox_{\alpha f}$ and the fact that $\prox_{\alpha f}(0)=0$, $$\langle \prox_{\alpha f}(w^*), w^* \rangle \geq \|\prox_{\alpha f}(w^*) \|^2.$$ This implies that $\prox_{\alpha f}(w^*)=0$. Thus by Lemma \ref{lem:sparsity_technical1}, $w^* = 0$.

(iii) Finally, we consider the situation of $\beta>\alpha$. In this case, we assume that $0 \neq x \in \alpha\partial f(0)$. From Lemma \ref{lem:sparsity_technical2}, we know that $0 \in \prox_{\beta f_\alpha}(x)$. We further show that the point $0$ is the only element in $\prox_{\beta f_\alpha}(x)$.

Recall from the proof of Lemma \ref{lem:sparsity_technical2} that when $\beta > \alpha$,
$$
F(u) \geq \left(\frac{1}{2\alpha} - \frac{1}{2\beta}\right)(\|x\|^2 - \|u-x\|^2) + \frac{1}{2\beta}\|x\|^2 \geq \frac{1}{2\beta}\|x\|^2.
$$
Actually, if $w^* \in \prox_{\beta f_\alpha}(x)$, then $w^*$ must be on the boundary of $\overline{B_{\|x\|}}(x)$ and $F(w^*)=f_\alpha(w^*)+\frac{1}{2\beta} \|w^*-x\|^2=\frac{1}{2\beta} \|x\|^2$. Thus, $f_\alpha(w^*)=0$, that is, $f(w^*)=\env_{\alpha f}(w^*)$. We also know that $f(w^*)\ge \frac{1}{\alpha}\langle x, w^*\rangle$ and $\env_{\alpha f}(w^*) \le \frac{1}{2\alpha}\|w^*\|^2$. Therefore, because $2\langle x, w^*\rangle=\|w^*\|^2$, we get
$$
\env_{\alpha f}(w^*)=\frac{1}{2\alpha}\|w^*\|^2,
$$
which implies that $0=\prox_{\alpha f}(w^*)$. On the other hand, the identity $f(w^*)=\env_{\alpha f}(w^*)$ indicates $w^*=\prox_{\alpha f}(w^*)$. Therefore, $w^*=0$. This completes the proof.
\end{proof}

\begin{remark}
Item (iii) of the theorem is not tight. In fact in every example, when $\beta > \alpha$, $\prox_{\beta f_\alpha}(x) = 0$ for all $x$ in a set strictly larger than $\alpha \partial f(0)$. However, the exact form of this set depends entirely on the function in question.
\end{remark}

\section{Some Special Functions}\label{sec:Special}
The last section dealt primarily with behavior around the origin for general semiconvex sparsity promoting functions. In this section, we describe the structure of $f_\alpha$ on the entire domain for special classes of sparsity promoting functions, namely indicator functions, piecewise quadratic functions, and their linear combinations. The study of these particular functions is motivated by the thresholding behavior of their proximity operators.

\subsection{Indicator Functions}
Indicator functions are commonly used to include constraints in the objective of an optimization problem. We show in this section that they are not only fixed by the mapping $f \mapsto f_\alpha$ but they are the only functions that are fixed.

Throughout, we assume $C$ is a closed convex set in $\mathbb{R}^n$ with boundary $\bd(C)$. Recall that the indicator function of $C$ is
\begin{equation}\label{def:ind}\tag{$\mathcal{I}$}
\iota_C(x) = \begin{cases} 0, & \text{if } x \in C;\\
+ \infty, & \text{ otherwise.} \end{cases} \end{equation}
We first determine when this is a sparsity promoting function.

\begin{lemma}\label{def:sparseind}The indicator function $\iota_C$ is sparsity promoting if and only if $0 \in \bd(C)$ and $\{0\} \\subsetneq C$. \end{lemma}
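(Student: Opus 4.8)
The plan is to read off the two requirements of Definition~\ref{def:SPF} through the normal cone of $C$ at the origin. Since $C$ is closed convex and (whenever $\iota_C$ is proper) contains $0$, we have $\iota_C \in \Gamma_0(\mathbb{R}^n)$, and for $x \in C$ its subdifferential coincides with the normal cone $N_C(x) = \{v \in \mathbb{R}^n : \langle v, y - x\rangle \le 0 \text{ for all } y \in C\}$; in particular $\partial \iota_C(0) = N_C(0) = \{v : \langle v, y\rangle \le 0 \text{ for all } y \in C\}$. With this identification, condition (i) becomes a statement about membership of $0$ in $C$, and condition (ii) becomes the nontriviality of $N_C(0)$. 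The heart of the proof is then the translation of ``$N_C(0) \neq \{0\}$'' into ``$0 \in \bd(C)$.''

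For the ``if'' direction I would assume $0 \in \bd(C)$ and $\{0\} \subsetneq C$. Condition (i) is immediate: $0 \in \bd(C) \subseteq \overline{C} = C$, so $\iota_C(0) = 0$, and since $\iota_C \ge 0$ everywhere the origin is a global minimizer. For condition (ii) I would exhibit a nonzero element of $N_C(0)$. If $\interior(C) \neq \emptyset$, the supporting hyperplane theorem applied at the boundary point $0$ furnishes a nonzero $a$ with $\langle a, y\rangle \le 0$ for all $y \in C$, i.e. $a \in N_C(0) \setminus \{0\}$. If instead $\interior(C) = \emptyset$, then $C$ lies in its affine hull $L$, which is a proper linear subspace since $0 \in C$; any nonzero $a$ orthogonal to $L$ satisfies $\langle a, y\rangle = 0 \le 0$ for all $y \in C$, so again $a \in N_C(0)\setminus\{0\}$. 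Either way (ii) holds.

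For the ``only if'' direction I would assume $\iota_C$ is sparsity promoting. Condition (i) forces $\iota_C(0)=0$, hence $0 \in C$. Condition (ii) yields a nonzero $v \in N_C(0)$, and I would use it to exclude $0 \in \interior(C)$: were $0$ interior, then for every direction $d$ both $\pm\epsilon d$ would lie in $C$ for small $\epsilon>0$, so $\langle v, d\rangle \le 0$ and $\langle v, -d\rangle \le 0$ would force $\langle v, d\rangle = 0$ for all $d$, i.e. $v = 0$, a contradiction. Since $C$ is closed this gives $0 \in C \setminus \interior(C) = \bd(C)$.

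The delicate point, and the one place where the bookkeeping must be done carefully, is the nondegeneracy clause $\{0\} \subsetneq C$. The arguments above show cleanly that sparsity promotion is equivalent to $0 \in \bd(C)$ alone; the clause $\{0\} \subsetneq C$ serves only to exclude the degenerate singleton $C = \{0\}$, for which $\iota_C$ is finite solely at the origin yet $N_{\{0\}}(0) = \mathbb{R}^n$ is (vacuously) nontrivial, so that $\iota_{\{0\}}$ already satisfies Definition~\ref{def:SPF}. I would therefore treat $\{0\} \subsetneq C$ as the standing nondegeneracy hypothesis ruling out this trivial constraint, and remark explicitly that, absent it, the characterization reads simply ``$\iota_C$ is sparsity promoting if and only if $0 \in \bd(C)$.'' The remaining verifications—that $\iota_C$ is proper, lower semicontinuous, and convex, and that $\partial \iota_C(0) = N_C(0)$—are standard facts from convex analysis that I would cite rather than reprove.
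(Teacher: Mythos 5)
Your proof is correct and follows essentially the same route as the paper: identify $\partial\iota_C(0)$ with the normal cone $N_C(0)$ and translate condition (ii) of Definition~\ref{def:SPF} into $0\in\bd(C)$ via the equivalence $N_C(0)=\{0\}\iff 0\in\interior(C)$, which the paper simply cites from the literature and you reprove directly with supporting hyperplanes and the empty-interior case. Your handling of the clause $\{0\}\subsetneq C$ is also on target: the paper's own proof never addresses it, and as you observe the literal ``only if'' direction would fail for $C=\{0\}$ (where $N_{\{0\}}(0)=\mathbb{R}^n$ makes $\iota_{\{0\}}$ satisfy Definition~\ref{def:SPF}), so treating that clause as a standing nondegeneracy hypothesis, as you propose, is the right reading.
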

\begin{proof} As long as $0 \in C$, $\iota_C(0) = 0$, but to be sparsity promoting, there must also be a nonzero element in $\partial \iota_C(0)$. Recall that for any $x$, $\partial \iota_C(x)$ is the normal cone to $C$ at $x$. That is,
$$\partial \iota_C(x) =  N_C(x) \coloneqq \begin{cases} \{ u : \sup \langle C-x, u \rangle \leq 0 \}, & \text{if } x \in C\\
\emptyset, & \text{ otherwise.} \end{cases}$$
Note that for $x \in C$, the normal cone is nonempty because $\{ 0 \} \subseteq N_C(x)$.  We further recall the following result from \cite{Bauschke-Combettes:11}:
$$x \in \interior(C) \iff N_C(x) = \{ 0\}.$$
If $0 \in \bd(C)$, it follows that $N_C(x)$ is nonempty and contains a nonzero element. Conversely, if we assume $N_C(0)$ is nonempty, we must have $0 \in C$. If we further assume that $N_C(0)$ contains a nonzero element, then $0 \not\in \interior(C)$. So we see that $0 \in \bd(C)$ is equivalent to the sparsity promoting definition given in Section \ref{sec:CSPF}.
\end{proof}

It is well known (see, e.g. \cite{Bauschke-Combettes:11}) that $\prox_{\alpha \iota_C}(x) = P_C(x)$ and that $p = P_C(x)$ if and only if $x-p \in N_C(p)$. Here $P_C (x)$ is the unique operator such that $\|x-P_C(x)\|$ is the distance from $x$ to $C$. In terms of the proximity operator, this becomes $0 = \prox_{\alpha \iota_C}(x)$ if and only if $x \in N_C(0)$. Moreover $\env_{\alpha \iota_C}(x) = \frac{1}{2\alpha}\|P_C(x) - x\|^2$ and
\begin{equation}\tag{$\mathcal{I}_\alpha$}
(\iota_C)_\alpha(x) \coloneqq \iota_C(x) - \env_{\alpha \iota_C}(x) = \iota_C(x).
\end{equation} This immediately implies that $\prox_{\beta (\iota_C)_\alpha}(x) = P_C(x)$ as well. The converse of the above is also true.

\begin{proposition}\label{prop:indicator}
Let $f \in \Gamma_0(\mathbb{R}^n)$ be sparsity promoting. If $f = f_\alpha$ as defined by \eqref{def:falpha}, then $f = \iota_{\dom (f)}$.
\end{proposition}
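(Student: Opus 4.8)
The plan is to exploit the defining hypothesis $f = f_\alpha$ directly, converting it into a pointwise statement about the Moreau envelope. Since $f_\alpha = f - \env_{\alpha f}$, on the domain $\dom(f)$ the value $f(x)$ is finite, so the identity $f(x) = f(x) - \env_{\alpha f}(x)$ rearranges to $\env_{\alpha f}(x) = 0$. Off $\dom(f)$ the identity holds trivially: there $f(x) = +\infty$ while $\env_{\alpha f}(x)$ is finite (the Moreau envelope of a proper lsc convex function is real-valued everywhere), so both sides equal $+\infty$. Thus the entire content of the hypothesis is that $\env_{\alpha f}$ vanishes on $\dom(f)$.

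Next I would use that $f$ is sparsity promoting to conclude $f \geq 0$ everywhere: by Definition~\ref{def:SPF}, $f$ attains its global minimum $f(0) = 0$ at the origin, so $\min_x f(x) = 0$. Consequently every term appearing in the infimum $\env_{\alpha f}(x) = \inf_u \{ f(u) + \frac{1}{2\alpha}\|u - x\|^2 \}$ is nonnegative. Because $f \in \Gamma_0(\mathbb{R}^n)$, the objective $u \mapsto f(u) + \frac{1}{2\alpha}\|u-x\|^2$ is $\frac{1}{\alpha}$-strongly convex and lower semicontinuous, so its infimum is attained at the unique point $p = \prox_{\alpha f}(x)$.

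The key step is then to read off $f(x) = 0$ from $\env_{\alpha f}(x) = 0$. Fix $x \in \dom(f)$. Attainment gives $f(p) + \frac{1}{2\alpha}\|p - x\|^2 = \env_{\alpha f}(x) = 0$, and since both summands are nonnegative each must vanish: $f(p) = 0$ and $p = x$. Hence $f(x) = f(p) = 0$. As $x$ was an arbitrary point of $\dom(f)$, the function $f$ is identically zero on its domain and $+\infty$ off it, which is precisely the assertion $f = \iota_{\dom(f)}$.

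The argument is short, and the only delicate point is the inference $\env_{\alpha f}(x) = 0 \Rightarrow f(x) = 0$; this is where nonnegativity of $f$ (from sparsity promotion) and attainment of the proximal minimum (from strong convexity of the regularized objective) do the real work. If one prefers to avoid invoking attainment, the same conclusion follows by taking a minimizing sequence $u_k$ with $f(u_k) + \frac{1}{2\alpha}\|u_k - x\|^2 \to 0$, which forces $u_k \to x$ and $f(u_k) \to 0$, and then combining lower semicontinuity of $f$ with $f \geq 0$ to get $f(x) = 0$. Either route closes the proof.
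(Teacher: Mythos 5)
Your proof is correct and follows essentially the same route as the paper's: reduce the hypothesis to $\env_{\alpha f}\equiv 0$ on $\dom(f)$, invoke nonnegativity of $f$ from the sparsity promoting property, and conclude $f\equiv 0$ on its domain. You actually fill in a step the paper leaves implicit — that $\env_{\alpha f}(x)=0$ forces $f(x)=0$ via attainment of the proximal minimum at $p$ with both nonnegative summands vanishing, hence $p=x$ — which is a welcome bit of extra care.
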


\begin{proof}
Notice that $\dom(\env_{\alpha f}) = \mathbb{R}^n$ so $\dom(f_\alpha) = \dom(f)$. Hence $f = f_\alpha$ implies that $\env_{\alpha f}(x) = 0$ for all $x \in \dom(f)$. Because $f$ is sparsity promoting, $f(x) \geq 0$ for all $x$. Hence, $0=\env_{\alpha f}(x) = \min_{u \in \mathbb{R}^n}\{f(u) + \frac{1}{2\alpha}\|u-x\|^2\}$ for all $x \in \dom(f)$ implies that $f(x) = 0$ for all $x \in \dom(f)$.
\end{proof}

\begin{remark} The proposition is true more generally if $f \in \Gamma_0(\mathbb{R}^n)$ is simply nonnegative. \end{remark}

\subsection{Piecewise Quadratic Functions} \label{subsec:quadratic}

Piecewise quadratic functions include a variety of important examples: absolute value, ReLU (rectified linear unit), and elastic net. We generalize the proximity-related properties of these functions and provide a framework for generating customized penalty functions.

The piecewise quadratic functions we consider here have the following form
\begin{equation}\label{def:quad}\tag{$\mathcal{Q}$}
f(x) = \begin{cases} \frac{1}{2}a_1x^2 + b_1x, & \text{if } x \leq 0;\\
\frac{1}{2}a_2x^2 + b_2x, & \text{if } x \ge 0,
\end{cases}
\end{equation}
where the coefficients $a_1$, $a_2$, $b_1$, and $b_2$ are real numbers.  The characterization of sparsity promoting functions having a form given \eqref{def:quad} is established in the following lemma.
\begin{lemma}\label{lemma:NS-conditions-quadratic}
Let $f$ be a piecewise quadratic function defined by \eqref{def:quad}. Then $f$ is sparsity promoting if and only if
\begin{equation}\label{eq:requirements}
a_1 \ge 0, \quad a_2\ge 0, \quad b_1 \le 0\le b_2, \quad \mbox{and} \quad b_2-b_1>0.
\end{equation}
\end{lemma}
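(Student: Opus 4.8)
The plan is to unpack Definition~\ref{def:SPF} into its two requirements and check each against the sign conditions in \eqref{eq:requirements}. Since both pieces of $f$ vanish at $x=0$, the function is continuous and finite-valued, hence $f \in \Gamma(\mathbb{R})$ regardless of the coefficients, so only conditions (i) and (ii) of the definition need attention. Moreover $f(0)=0$ is automatic, so condition (i) reduces to the global nonnegativity $f(x) \ge 0$ for all $x$, while condition (ii) concerns $\partial f(0)$. The two halves of the equivalence then decouple cleanly: the four sign inequalities $a_1,a_2\ge 0$ and $b_1\le 0\le b_2$ will be shown equivalent to condition (i), and the strict inequality $b_2-b_1>0$ equivalent to condition (ii) once (i) holds.

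First I would handle condition (i) by factoring $f(x) = x\big(\tfrac12 a_1 x + b_1\big)$ for $x \le 0$ and $f(x) = x\big(\tfrac12 a_2 x + b_2\big)$ for $x \ge 0$. For sufficiency, under $a_1,a_2\ge 0$ and $b_1\le 0\le b_2$ each factor keeps a constant sign on its half-line, so the product is $\ge 0$. For necessity I would read off the constraints from two regimes: as $x \to 0^{\mp}$ the linear term dominates, so $f\ge 0$ forces $b_1\le 0$ and $b_2\ge 0$; as $x\to\mp\infty$ the quadratic term dominates, so $f\ge 0$ forces $a_1\ge 0$ and $a_2\ge 0$. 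This gives condition (i) $\iff$ $a_1\ge 0$, $a_2\ge 0$, $b_1\le 0\le b_2$.

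Next I would compute $\partial f(0)$ directly from the Fr\'echet definition rather than appeal to convexity, since in the ``only if'' direction the sign conditions are not yet in hand. For $d\in\mathbb{R}$, splitting the defining liminf into one-sided limits gives, as $u\to 0^+$, the ratio $\tfrac{f(u)-du}{|u|} = \tfrac12 a_2 u + (b_2 - d)\to b_2 - d$, and as $u\to 0^-$, the ratio $\to d - b_1$. Hence the two-sided liminf is $\min\{b_2-d,\,d-b_1\}$, so $\partial f(0) = [b_1,b_2]$ whenever $b_1\le b_2$ and is empty otherwise. Under condition (i) we already have $b_1\le 0\le b_2$, so this interval is valid and contains $0$; it contains a nonzero element exactly when $b_1<0$ or $b_2>0$, i.e. when $b_2-b_1>0$. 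Thus condition (ii), given condition (i), is equivalent to the last inequality of \eqref{eq:requirements}.

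Assembling the two directions then completes the proof. I expect the only delicate point to be the necessity half of condition (i): one must verify that nonnegativity cannot be violated in either the near-origin regime (which pins down the $b_i$) or the far-field regime (which pins down the $a_i$), and confirm that these yield genuinely independent constraints. The direct liminf computation of $\partial f(0)$ is what keeps the argument self-contained, since it avoids having to first prove $f$ convex before identifying its subdifferential at the origin.
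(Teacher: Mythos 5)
Your proof is correct and follows essentially the same route as the paper's: both decouple the two requirements of Definition~\ref{def:SPF}, matching the sign conditions on $a_1,a_2,b_1,b_2$ to the global-minimum property and the strict inequality $b_2-b_1>0$ to the existence of a nonzero element of $\partial f(0)=[b_1,b_2]$. The only differences are cosmetic: you verify the minimum condition by factoring and nonnegativity where the paper uses monotonicity on each half-line, and you carry out explicitly the liminf computation of $\partial f(0)$ that the paper dismisses with ``one can directly verify.''
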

\begin{proof}\ \
``$\Rightarrow$'': Since $f$ is sparsity promoting, then the assumption that $f$ attains its minimum at $0$ implies that $a_1 \ge0$, $a_2\ge 0$, $b_1\le 0$, and $b_2\ge 0$. One can directly verify that $\partial f(0)=[b_1,b_2]$. This must contain at least one nonzero element, hence, $b_2-b_1>0$.

``$\Leftarrow$'': One can see that $f$ is nonincreasing on $(-\infty, 0]$ from $a_1 \ge 0$ and $b_1\le 0$ and that  $f$ is nondecreasing on $[0,\infty)$ from $a_2 \ge 0$ and $b_2\ge 0$. So $f$ achieves its global minimum at $0$. The condition $b_2-b_1>0$ implies that the set $\partial f(0)=[b_1,b_2]$ has nonzero elements. Therefore, $f$ is a sparsity promoting function.
\end{proof}

\begin{remark} As a by-product of the above lemma, if $f$ given by \eqref{def:quad} is a sparsity promoting function, then $f$ must be convex, hence $f \in \Gamma_0(\mathbb{R})$. \end{remark}

In the rest of this section, we always assume that the coefficients in  \eqref{def:quad} satisfy the conditions listed in \eqref{eq:requirements}. The proximity operator and Moreau envelope of $f$ with index $\alpha$ at $x \in \mathbb{R}$ are
$$\prox_{\alpha f}(x) = \begin{cases} \min\big\{ 0, \frac{1}{\alpha a_1 + 1}(x - \alpha b_1)\big\}, & \text { if } x \leq 0;\\
\max\big\{0, \frac{1}{\alpha a_2 + 1}(x- \alpha b_2) \big\}, & \text{if } x \ge 0; \end{cases}$$
and
$$\env_{\alpha f}(x) = \begin{cases} \frac{1}{\alpha a_1 + 1}(f(x) - \frac{\alpha b_1^2}{2}), & \text{if } x \leq \alpha b_1;\\
\frac{1}{2\alpha}x^2, & \text{if } \alpha b_1 \leq x \leq \alpha b_2;\\
\frac{1}{\alpha a_2 + 1}(f(x) - \frac{\alpha b_2^2}{2}), & \text{if } x \ge \alpha b_2.
\end{cases}$$
respectively.  From the above two equations, we get
\begin{equation}\label{def:falpha_quad}\tag{$\mathcal{Q}_\alpha$}
f_\alpha(x) = \begin{cases}
\frac{\alpha a_1}{\alpha a_1 + 1}f(x) + \frac{ \alpha b_1^2}{2(\alpha a_1+ 1)}, & \text{if } x \leq \alpha b_1;\\
f(x) - \frac{1}{2\alpha}x^2, & \text{if } \alpha b_1 \leq x \leq \alpha b_2;\\
\frac{\alpha a_2}{\alpha a_2 + 1}f(x) + \frac{ \alpha b_2^2}{2(\alpha a_2 + 1)}, & \text{if } x \ge \alpha b_2,
\end{cases}
\end{equation}
which is a piecewise quadratic polynomial with possible breakpoints at $\alpha b_1$, $0$, and $\alpha b_2$. We know this $f_\alpha$ is sparsity promoting by Theorem~\ref{thm:sparsity}. Some other properties of this function which follow immediately from \eqref{def:falpha_quad} are collected in the following lemma.

\begin{lemma}\label{lem:sparsequadratic} Let $f \in \Gamma_0(\mathbb{R})$ be a sparsity promoting function defined by \eqref{def:quad}. Then the following hold:
\begin{itemize}
\item[(i)] $f_\alpha$ is nonincreasing on $(-\infty, 0]$ and is nondecreasing on $[0, \infty)$;
\item[(ii)] $f_\alpha$ on $(-\infty, \alpha b_1]$ is convex and is a degree $2$ polynomial if $a_1>0$ or constant if $a_1=0$;
\item[(iii)] $f_\alpha$ on $[\alpha b_2, \infty)$ is convex and is a degree $2$ polynomial if $a_2>0$ or a constant if $a_2=0$;
\item[(iv)] $f_\alpha$ on $[\alpha b_1, \alpha b_2]$ is convex if $\min\{a_1,a_2\} \ge \frac{1}{\alpha}$.
\end{itemize}
\end{lemma}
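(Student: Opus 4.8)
The plan is to establish all four parts by directly analyzing the explicit piecewise description of $f_\alpha$ in \eqref{def:falpha_quad}, treating each quadratic piece separately and then patching the pieces together using continuity of $f_\alpha$ (which holds since $f$ and $\env_{\alpha f}$ are both continuous). Throughout, the sign conditions $a_1, a_2 \ge 0$ and $b_1 \le 0 \le b_2$ from \eqref{eq:requirements} do all the work.

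Parts (ii) and (iii) are the most immediate. On $(-\infty, \alpha b_1]$ one substitutes $f(x) = \frac{1}{2} a_1 x^2 + b_1 x$ into the top branch of \eqref{def:falpha_quad} and reads off the leading coefficient $\frac{\alpha a_1^2}{2(\alpha a_1 + 1)}$. When $a_1 > 0$ this coefficient is strictly positive, so $f_\alpha$ restricted to this ray is a genuine degree-two polynomial and hence convex; when $a_1 = 0$ the whole branch collapses to the constant $\frac{\alpha b_1^2}{2}$. Part (iii) follows by the identical computation on $[\alpha b_2, \infty)$ with the roles of $(a_1, b_1)$ replaced by $(a_2, b_2)$.

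For part (iv) I would work on the middle branch $f_\alpha(x) = f(x) - \frac{1}{2\alpha} x^2$, which over $[\alpha b_1, \alpha b_2]$ splits at the single interior breakpoint $x = 0$ into $\frac{1}{2}(a_1 - \frac{1}{\alpha}) x^2 + b_1 x$ on $[\alpha b_1, 0]$ and $\frac{1}{2}(a_2 - \frac{1}{\alpha}) x^2 + b_2 x$ on $[0, \alpha b_2]$. Under the hypothesis $\min\{a_1, a_2\} \ge \frac{1}{\alpha}$ both leading coefficients are nonnegative, so each piece is convex; the remaining point is to confirm that convexity survives the junction at $0$. Here I would invoke the characterization of convexity by a nondecreasing derivative: the left derivative at $0$ equals $b_1$ and the right derivative equals $b_2$, and $b_1 \le 0 \le b_2$ gives exactly the required ordering $b_1 \le b_2$. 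This junction check is the one genuinely non-routine point of the lemma, since convexity of each piece in isolation is not sufficient.

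Finally, for part (i) I would compute $f_\alpha'$ on each of the (up to) four pieces. On the outer pieces the derivative is $\frac{\alpha a_i}{\alpha a_i + 1}(a_i x + b_i)$, whose sign is forced by $a_i \ge 0$, the sign of $b_i$, and the position of $x$ relative to $\alpha b_i$. On the two inner pieces the derivative is affine, so it suffices to check its sign at the endpoints: at the breakpoints $\alpha b_1$ and $\alpha b_2$ it evaluates to $\alpha a_1 b_1 \le 0$ and $\alpha a_2 b_2 \ge 0$, and at $0$ it equals $b_1 \le 0$ from the left and $b_2 \ge 0$ from the right. This shows $f_\alpha' \le 0$ on $(-\infty, 0]$ and $f_\alpha' \ge 0$ on $[0, \infty)$; continuity of $f_\alpha$ then patches the closed subintervals together to give the claimed monotonicity.
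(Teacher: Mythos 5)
Your proof is correct, and it follows exactly the route the paper intends: the paper states that the lemma ``follows immediately'' from the explicit piecewise formula \eqref{def:falpha_quad} and supplies no further argument, and your write-up is precisely that direct verification, piece by piece, with the sign conditions \eqref{eq:requirements} doing the work. The one step you rightly flag as non-routine --- checking that the one-sided derivatives $b_1 \le 0 \le b_2$ are correctly ordered at the junction $x=0$ in part (iv) --- is handled correctly and is a worthwhile addition to what the paper leaves implicit.
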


Just as the sparsity promoting property corresponds to certain behavior in the proximity operator near the origin, this result in Lemma~\ref{lem:sparsequadratic}  guarantees special properties of the proximity operator away from the origin. To illustrate, we return to $f(x) = |x|$. This satisfies \eqref{def:quad} with $a_1=a_2=0$, $b_1 = -1$, and $b_2 = 1$. We saw in Section \ref{sec:NSPF} that $f_\alpha(x) = \min\{|x| - \frac{1}{2\alpha}x^2, \frac{\alpha}{2}\}$. Because this function is constant away from the origin, $\prox_{\beta f_\alpha}(x)$ must be the identity for large values of $x$.  For example, if $\beta > \alpha$, $\prox_{\beta f_\alpha}(x)=x$ when $|x| \sqrt{\alpha \beta}$. Some other detail can be found in Example 1 of Section \ref{sec:Examples}.

In the rest of this subsection, we will give a general discussion on  the proximity operator $\prox_{\beta f_\alpha}$ for $f$ defined by  \eqref{def:falpha_quad}. We assume that $x \geq 0$ for a moment. By Lemma~\ref{lem:sparsity_technical2}, we know that $\prox_{\beta f_\alpha}(x) \subseteq  [0, \infty)$, therefore by the definition of the proximity operator,
$$
\prox_{\beta f_\alpha}(x) = \argmin_{u \in [0, \infty)} E(x,u):= f_\alpha(u) + \frac{1}{2\beta}(u-x)^2.
$$
In view of \eqref{def:falpha_quad}, the objective function $E(x,u)$ with $(x,u) \in [0, \infty)\times [0, \infty)$ is
\begin{equation}\label{eq:E}
E(x,u)=\left\{
       \begin{array}{ll}
         E_1(x,u), & \hbox{if $u \in [0,\alpha b_2]$;} \\
         E_2(x,u), & \hbox{if $u \in [\alpha b_2, \infty)$,}
       \end{array}
     \right.
\end{equation}
where
\begin{eqnarray}
E_1(x,u) &=& \frac{1}{2}\left(a_2 - \frac{1}{\alpha} + \frac{1}{\beta}\right)u^2 + \left(b_2 - \frac{1}{\beta}x\right)u + \frac{1}{2\beta}x^2, \label{eq:E_1} \\
E_2(x,u) &=& \frac{1}{2} \left(\frac{\alpha a_2^2}{\alpha a_2 + 1} + \frac{1}{\beta}\right)u^2 + \left(\frac{\alpha a_2 b_2}{\alpha a_2 + 1} - \frac{1}{\beta} x \right) u + \frac{\alpha b_2^2}{2(\alpha a_2 + 1)} + \frac{1}{2\beta} x^2. \label{eq:E_2}
\end{eqnarray}
These two functions match at the line $u=\alpha b_2$, that is,  for all $x \ge 0$,
\begin{equation}\label{eq:e1-E2}
E_1(x,\alpha b_2)=E_2(x, \alpha b_2),
\end{equation}
which will facilitate the proofs of technical lemmas given later.

Define
$$
s_1(x)=\argmin_{u \in [0, \alpha b_2]} E_1(x,u) \quad \mbox{and} \quad s_2(x)=\argmin_{u \in [\alpha b_2, \infty)} E_2(x,u).
$$
Obviously,
\begin{equation}\label{eq:prox-s1cups2}
\prox_{\beta f_\alpha}(x) \subset s_1(x) \cup  s_2(x).
\end{equation}
Therefore, to figure out the expression of $\prox_{\beta f_\alpha}(x)$, there is a need to know the structures of the sets $s_1(x)$ and $s_2(x)$.

Since the quadratic polynomial $E_2(x,\cdot)$ is strictly convex, then we have for each $x \ge 0$, $s_2(x)$ is a singleton set as follows:
\begin{eqnarray}
s_2(x)&=&\max\left\{\alpha b_2, \frac{\alpha a_2 + 1}{\alpha a_2(a_2\beta + 1)+1}\left(x-\frac{\alpha a_2 \beta b_2}{\alpha a_2 + 1}\right)\right\}   \nonumber  \\
&=& \begin{cases}
\alpha b_2, & \text { if } 0 \le x \leq \alpha b_2(a_2 \beta + 1);\\
 \frac{\alpha a_2 + 1}{\alpha a_2(a_2\beta + 1)+1}\big(x-\frac{\alpha a_2 \beta b_2}{\alpha a_2 + 1}\big), & \text{if } x \geq \alpha b_2(a_2 \beta + 1),
\end{cases}  \label{eq:s2}
\end{eqnarray}
which clearly is a piecewise linear function of $x$.


\begin{lemma}\label{lemma:b2=0}
Let $f$ be a piecewise quadratic sparsity promoting function as defined by \eqref{def:quad}. If $b_2=0$, then $\prox_{\beta f_\alpha}(x) = s_2(x)$ for all $x\ge 0$, where $s_2$ is given by \eqref{eq:s2}.
\end{lemma}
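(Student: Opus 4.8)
The plan is to exploit the degeneracy that the hypothesis $b_2 = 0$ forces on the decomposition \eqref{eq:E}: the subinterval on which the objective equals $E_1$ collapses to a single point, so that minimizing over the positive axis reduces to minimizing $E_2$, whose minimizer is by definition $s_2(x)$. In other words, with $b_2 = 0$ the middle quadratic piece of $f_\alpha$ disappears on $[0,\infty)$ and the problem becomes a single strictly convex quadratic program on a half-line.

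Concretely, I would first recall from Lemma~\ref{lem:sparsity_technical2} that for $x \geq 0$ we have $\prox_{\beta f_\alpha}(x) \subseteq [0, \infty)$, so that $\prox_{\beta f_\alpha}(x) = \argmin_{u \in [0,\infty)} E(x,u)$ with $E$ given by \eqref{eq:E}. Substituting $b_2 = 0$, the region $u \in [0, \alpha b_2]$ on which $E = E_1$ shrinks to the single point $\{0\}$, while the region $u \in [\alpha b_2, \infty)$ on which $E = E_2$ becomes all of $[0,\infty)$. I would then argue that $E(x, \cdot) = E_2(x, \cdot)$ identically on $[0,\infty)$: for $u > 0$ this is immediate from \eqref{eq:E}, and at the single overlap point $u = 0 = \alpha b_2$ the matching identity \eqref{eq:e1-E2} gives $E_1(x,0) = E_2(x,0)$. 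Consequently $\argmin_{u \in [0,\infty)} E(x,u) = \argmin_{u \in [\alpha b_2, \infty)} E_2(x,u) = s_2(x)$, which is exactly the claim. Because $E_2(x, \cdot)$ is strictly convex, $s_2(x)$ is the unique minimizer, so the operator is the singleton $s_2(x)$ and the candidate $s_1(x) = \{0\}$ appearing in the containment \eqref{eq:prox-s1cups2} contributes nothing unless $x = 0$, where $s_2(0) = 0$ anyway.

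There is no serious obstacle here; the whole content lies in observing that $b_2 = 0$ deletes the middle region $[\alpha b_1, \alpha b_2]$ from the positive half-line. The only point that warrants a word of care is the degenerate overlap at $u = 0$, where both $E_1$ and $E_2$ are nominally defined: invoking \eqref{eq:e1-E2} both legitimizes the reduction $E = E_2$ and confirms $f_\alpha$ is single-valued there. It is also worth recording that when $b_2 = 0$ the formula \eqref{eq:s2} collapses to the linear map $x \mapsto \frac{\alpha a_2 + 1}{\alpha a_2(a_2\beta + 1)+1}\,x$, which is nonnegative for $x \geq 0$ and hence genuinely lands in $[\alpha b_2, \infty) = [0,\infty)$, so that $s_2$ is indeed the minimizer over the full domain rather than being pinned at the endpoint.
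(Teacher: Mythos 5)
Your proof is correct and follows essentially the same route as the paper: the paper's (one-line) argument is precisely that $b_2=0$ makes $E(x,u)=E_2(x,u)$ on $[0,\infty)\times[0,\infty)$, so the prox reduces to minimizing $E_2$, i.e.\ to $s_2(x)$. Your additional checks (the degenerate overlap at $u=0$ via \eqref{eq:e1-E2}, strict convexity of $E_2(x,\cdot)$, and the simplified form of \eqref{eq:s2}) are all sound elaborations of that same observation.
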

\begin{proof}\ \ This follows from \eqref{eq:E} and \eqref{eq:E_2} that $E(x,u)=E_2(x,u)$ for $(x,u) \in [0, \infty)\times [0, \infty)$.
\end{proof}

Next, we assume that $b_2>0$ by Lemma~\ref{lemma:NS-conditions-quadratic}. In view of the form of $E_1(x,\cdot)$ in \eqref{eq:E_1}, we consider three cases: $a_2 - \frac{1}{\alpha} + \frac{1}{\beta} > 0$, $a_2 - \frac{1}{\alpha} + \frac{1}{\beta} = 0$, and $a_2 - \frac{1}{\alpha}+\frac{1}{\beta} < 0$ which are equivalently to  (i) $\alpha b_2 (a_2 \beta + 1) > \beta b_2$, (ii) $\alpha b_2 (a_2 \beta + 1) = \beta b_2$, and  (iii) $\alpha b_2 (a_2 \beta + 1) < \beta b_2$, respectively. Accordingly, $E_1(x,\cdot)$ is strongly convex, convex, or concave on $[0, \alpha b_2]$. The result for case (i) is stated in the following lemma.

\begin{lemma}\label{lemma:b2>0:case1}
Let $f$ be a piecewise quadratic sparsity promoting function as defined by \eqref{def:quad}. If $b_2>0$ and  $\alpha b_2 (a_2 \beta + 1) > \beta b_2$, then
\begin{equation}\label{eq:general-prox-case1}
\prox_{\beta f_\alpha}(x) =\left\{
         \begin{array}{ll}
           0, & \hbox{if $0\le x < \beta b_2$;} \\
          \frac{\alpha}{(a_2\beta+1)\alpha-\beta}(x-\beta b_2), & \hbox{if $\beta b_2 \le x \le \alpha b_2 (a_2\beta+1)$;} \\
           \frac{\alpha a_2 + 1}{\alpha a_2(a_2\beta + 1)+1}\left(x-\frac{\alpha a_2 \beta b_2}{\alpha a_2 + 1}\right), & \hbox{if $x >\alpha b_2 (a_2\beta+1)$.}
         \end{array}
       \right.
\end{equation}
\end{lemma}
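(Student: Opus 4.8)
The plan is to use the containment $\prox_{\beta f_\alpha}(x)\subset s_1(x)\cup s_2(x)$ from \eqref{eq:prox-s1cups2}, compute $s_1(x)$ and $s_2(x)$ explicitly, and then decide which of the two is the genuine minimizer of $E(x,\cdot)$ over $[0,\infty)$ in each range of $x\ge 0$. Because we are in case (i), the leading coefficient $a_2-\frac{1}{\alpha}+\frac{1}{\beta}$ of $E_1(x,\cdot)$ in \eqref{eq:E_1} is strictly positive, so $E_1(x,\cdot)$ is a strictly convex parabola whose unconstrained vertex is
$$
u^*(x)=\frac{\alpha}{(a_2\beta+1)\alpha-\beta}(x-\beta b_2).
$$
First I would obtain $s_1(x)$ as the projection of $u^*(x)$ onto $[0,\alpha b_2]$: a direct check shows $u^*(x)\le 0$ exactly when $x\le\beta b_2$ and $u^*(x)\ge\alpha b_2$ exactly when $x\ge\alpha b_2(a_2\beta+1)$, so $s_1(x)$ equals $0$, $u^*(x)$, or $\alpha b_2$ on the three consecutive subintervals determined by the breakpoints $\beta b_2$ and $\alpha b_2(a_2\beta+1)$ (these are correctly ordered precisely by the case (i) hypothesis). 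The formula for $s_2(x)$ is already available from \eqref{eq:s2}.

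The heart of the argument is the gluing step, where the matching identity \eqref{eq:e1-E2} is decisive. Since $E(x,\cdot)$ is continuous at the junction $u=\alpha b_2$ and convex on each piece, its global minimizer on $[0,\infty)$ is dictated by which piece carries its constrained minimizer in the interior. I would split at the threshold $x=\alpha b_2(a_2\beta+1)$. When $x\le\alpha b_2(a_2\beta+1)$, the formula \eqref{eq:s2} gives $s_2(x)=\alpha b_2$, i.e.\ $E_2(x,\cdot)$ is nondecreasing on $[\alpha b_2,\infty)$; combined with $E_1(x,\alpha b_2)=E_2(x,\alpha b_2)$, the minimum over $[0,\infty)$ is then attained on $[0,\alpha b_2]$ and therefore equals $s_1(x)$. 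Symmetrically, when $x\ge\alpha b_2(a_2\beta+1)$ the vertex $u^*(x)$ lies at or beyond $\alpha b_2$, so $s_1(x)=\alpha b_2$ and $E_1(x,\cdot)$ is nonincreasing on $[0,\alpha b_2]$, which forces the minimum onto $[\alpha b_2,\infty)$, where it equals $s_2(x)$. Reading off $s_1$ on $[0,\alpha b_2(a_2\beta+1)]$ and $s_2$ on $[\alpha b_2(a_2\beta+1),\infty)$ then assembles exactly the three-branch formula \eqref{eq:general-prox-case1}.

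I expect the gluing to be the main obstacle: one must argue rigorously that a boundary minimizer on one piece forces the global minimizer into the adjacent piece. The clean way is to compare values at the junction, namely $E_1(x,u^*(x))\le E_1(x,\alpha b_2)=E_2(x,\alpha b_2)$ below the threshold and $E_2(x,s_2(x))\le E_2(x,\alpha b_2)=E_1(x,\alpha b_2)$ above it, so in each regime the interior minimizer of the active piece beats every competitor on the other piece. A reassuring consistency check is that at $x=\alpha b_2(a_2\beta+1)$ all descriptions agree, since there $u^*(x)=\alpha b_2$ and the interior branch of \eqref{eq:s2} also reduces to $\alpha b_2$; this is what makes the two halves of \eqref{eq:general-prox-case1} dovetail continuously.
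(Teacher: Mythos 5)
Your proposal is correct and follows essentially the same route as the paper: compute $s_1$ as the truncation of the vertex of the strictly convex parabola $E_1(x,\cdot)$ to $[0,\alpha b_2]$, take $s_2$ from \eqref{eq:s2}, and decide between them by chaining inequalities through the junction value $E_1(x,\alpha b_2)=E_2(x,\alpha b_2)$ of \eqref{eq:e1-E2}. The paper presents this comparison via a six-region figure rather than your monotonicity phrasing, but the underlying argument is identical.
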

\begin{proof}\ \ From \eqref{eq:prox-s1cups2}, we first find the set $s_1(x)$ since the set $s_2(x)$ is already given in \eqref{eq:s2}.   By the assumption of this lemma, for each $x \ge 0$, $s_1(x)$ contains only one element and is given as follows:
$$
s_1(x)=\left\{
         \begin{array}{ll}
           0, & \hbox{if $0\le x < \beta b_2$;} \\
          \frac{\alpha}{(a_2\beta+1)\alpha-\beta}(x-\beta b_2), & \hbox{if $\beta b_2 \le x \le \alpha b_2 (a_2\beta+1)$;} \\
           \alpha b_2, & \hbox{if $x >\alpha b_2 (a_2\beta+1)$.}
         \end{array}
       \right.
$$
To determine the expression of $\prox_{\beta f_\alpha}(x)$ from the sets $s_1(x)$ and $s_2(x)$, we look at the behaviours of the functions $E_1$ and $E_2$ in the first quadrant of the $(x,u)$-plane.

We use Figure \ref{figure:b2>0:case1} to visualize the minimizers of $E_1$ and $E_2$. Three vertical lines $x=0$, $x=\beta b_2$, and $x=\alpha b_2 (a_2\beta+1)$, and two horizontal lines $u=0$ and $u=\alpha b_2$ partition the first quadrant into six rectangular regions (I to VI).  The solid red line is the graph of $s_1(x)$ while the dashed blue line is the graph of $s_2(x)$.

We know $E_1(x, 0) \leq E_1(x, u)$ in region I and $E_2(x, \alpha b_2) \leq E_2(x, u)$ in region II, so $E_1(x,0) < E_2(x, \alpha b_2)$ by Equation \eqref{eq:e1-E2} for $0\le x\le \beta b_2$. We observe  $E_1(x, s_1(x)) \leq E_1(x, u)$ in region III and $E_2(x, \alpha b_2) \leq E_2(x, u)$ in region IV, so $E_1(x,s_1(x)) < E_2(x, \alpha b_2)$ by Equation \eqref{eq:e1-E2} for $\beta b_2 \le x \le \alpha b_2 (a_2\beta+1)$; Finally, we know $E_1(x, \alpha b_2) \leq E_1(x, u)$ in region V and $E_2(x, s_2(x)) \leq E_2(x, u)$ in region VI, so $E_2(x,s_2(x)) < E_1(x, \alpha b_2)$ by Equation \eqref{eq:e1-E2} for $x >\alpha b_2 (a_2\beta+1)$. Thus $\prox_{\beta f_\alpha}$ is given by \eqref{eq:general-prox-case1}.
\begin{figure}[h]
\centering
\begin{tabular}{ccc}
\includegraphics[scale=0.45]{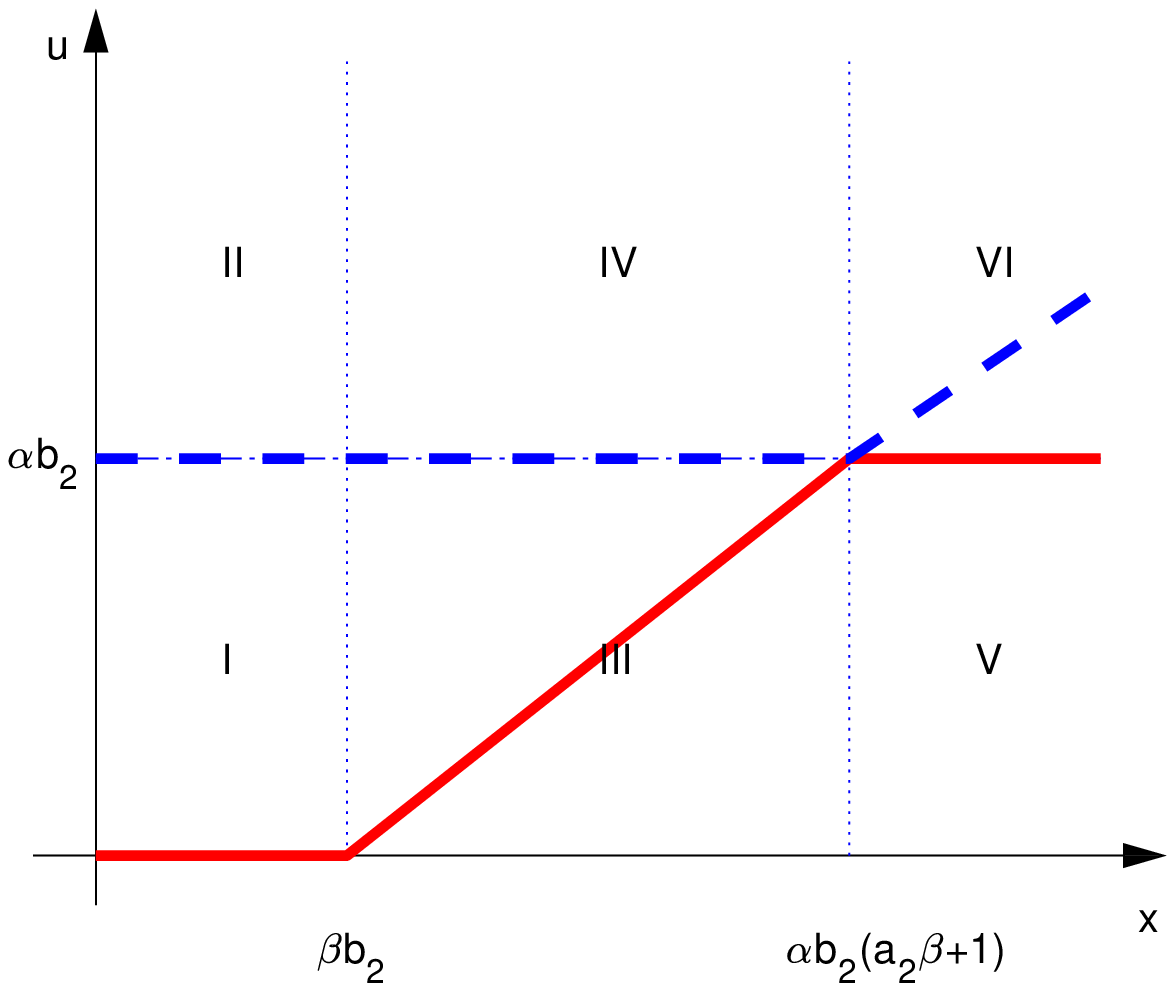} & & \includegraphics[scale=0.45]{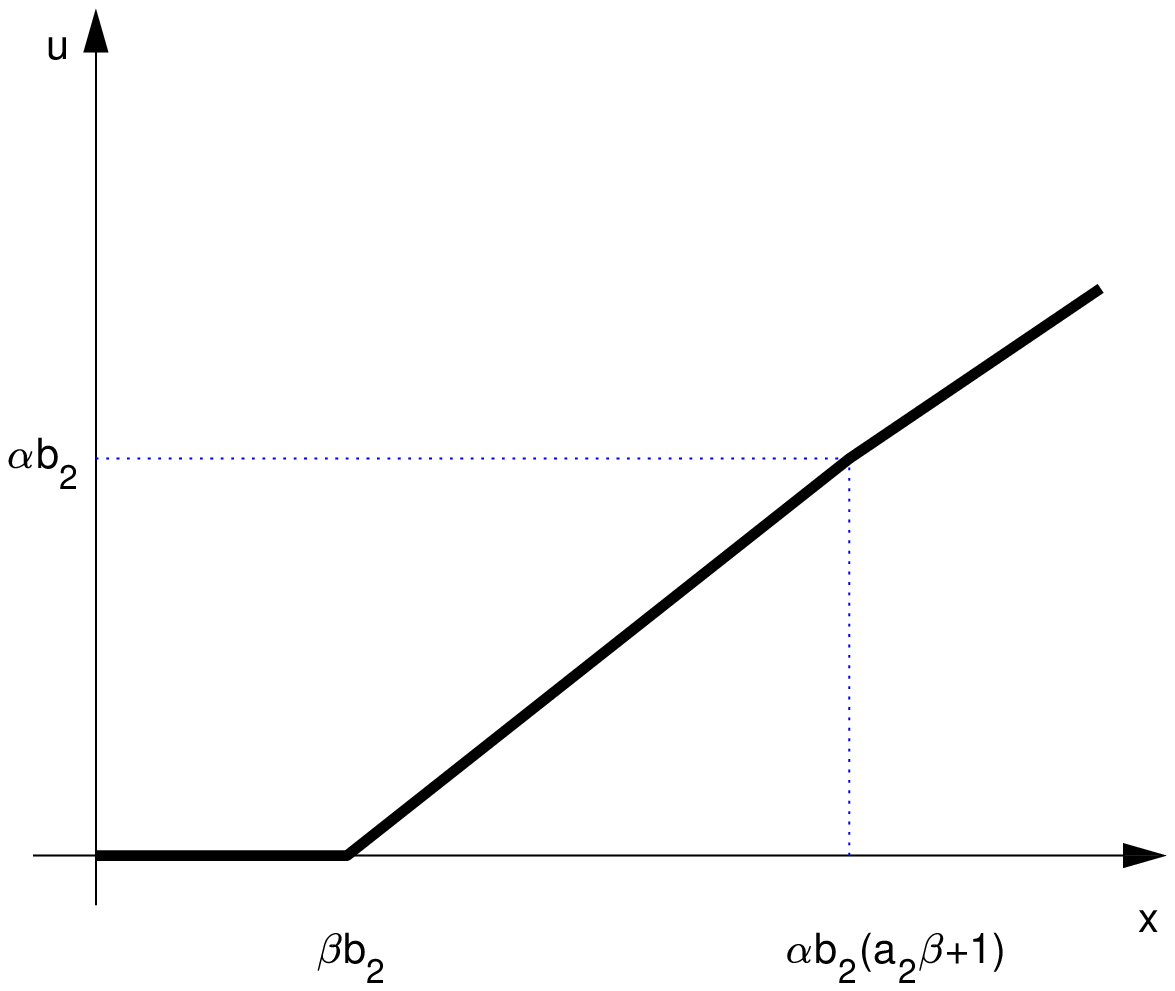}\\
(a) & & (b)
\end{tabular}
\caption{An illustration of case (i): $b_2>0$ and $\alpha b_2 (a_2 \beta + 1) > \beta b_2$. The graphs of (a) $s_1(x)$ (solid) and $s_2(x)$ (dashed) and (b) the resulting proximity operator $\prox_{\beta f_\alpha}(x)$.} \label{figure:b2>0:case1}
\end{figure}
\end{proof}

Next result is for case (ii).

\begin{lemma}\label{lemma:b2>0:case2}
Let $f$ be a piecewise quadratic sparsity promoting function as defined by \eqref{def:quad}. If $b_2>0$ and  $\alpha b_2 (a_2 \beta + 1) = \beta b_2$, then
\begin{equation}\label{eq:general-prox-case2}
\prox_{\beta f_\alpha}(x) =\left\{
         \begin{array}{ll}
           0, & \hbox{if $0\le x < \beta b_2$;} \\
          {[0, \alpha b_2]}, & \hbox{if $x=\beta b_2$;} \\
           \frac{\alpha a_2 + 1}{\alpha a_2(a_2\beta + 1)+1}\left(x-\frac{\alpha a_2 \beta b_2}{\alpha a_2 + 1}\right), & \hbox{if $x >\beta b_2$.}
         \end{array}
       \right.
\end{equation}
\end{lemma}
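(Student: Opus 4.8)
The plan is to follow the template set by the proof of Lemma~\ref{lemma:b2>0:case1}: determine the candidate minimizers $s_1(x)$ and $s_2(x)$ explicitly, and then use the matching identity \eqref{eq:e1-E2} together with the containment \eqref{eq:prox-s1cups2} to decide, on each range of $x$, whether the global minimizer of $E(x,\cdot)$ comes from the piece on $[0,\alpha b_2]$ or the piece on $[\alpha b_2,\infty)$. The key simplification specific to case (ii) is that the hypothesis $\alpha b_2(a_2\beta+1)=\beta b_2$ is precisely the condition $a_2-\frac{1}{\alpha}+\frac{1}{\beta}=0$, so the leading coefficient in \eqref{eq:E_1} vanishes and $E_1(x,\cdot)$ is \emph{affine} in $u$ with slope $b_2-\frac{1}{\beta}x$. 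This same hypothesis forces the single breakpoint of $s_2$ in \eqref{eq:s2} to sit exactly at $x=\beta b_2$.

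First I would read off $s_1(x)=\argmin_{u\in[0,\alpha b_2]}E_1(x,u)$ from the sign of the affine slope $b_2-\frac{1}{\beta}x$. For $0\le x<\beta b_2$ the slope is positive, $E_1(x,\cdot)$ is strictly increasing, and $s_1(x)=\{0\}$; for $x=\beta b_2$ the slope is zero, $E_1(x,\cdot)$ is constant, and $s_1(x)=[0,\alpha b_2]$; for $x>\beta b_2$ the slope is negative and $s_1(x)=\{\alpha b_2\}$. Simultaneously, \eqref{eq:s2} with the branch condition rewritten via $\alpha b_2(a_2\beta+1)=\beta b_2$ gives $s_2(x)=\alpha b_2$ for $0\le x\le\beta b_2$ and the strictly convex interior minimizer $\frac{\alpha a_2+1}{\alpha a_2(a_2\beta+1)+1}\big(x-\frac{\alpha a_2\beta b_2}{\alpha a_2+1}\big)$ for $x\ge\beta b_2$.

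Next I would compare the two candidate minimum values on each range using \eqref{eq:e1-E2}. For $0\le x<\beta b_2$, monotonicity gives $E_1(x,0)<E_1(x,\alpha b_2)=E_2(x,\alpha b_2)=E_2(x,s_2(x))$, so $0$ is the unique global minimizer. For $x>\beta b_2$, since $E_1(x,\cdot)$ is strictly decreasing and $E_2(x,\cdot)$ is strictly convex with interior minimizer $s_2(x)\neq\alpha b_2$, we get $E_2(x,s_2(x))<E_2(x,\alpha b_2)=E_1(x,\alpha b_2)=E_1(x,s_1(x))$, so the unique minimizer is $s_2(x)$, which is the third branch of \eqref{eq:general-prox-case2}. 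Note that the figure-based, region-by-region argument of Lemma~\ref{lemma:b2>0:case1} is not needed here, because the affineness of $E_1$ collapses every comparison to the single identity \eqref{eq:e1-E2}.

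The delicate point to get right is the boundary value $x=\beta b_2$, which produces the set-valued branch. Here $E_1(x,\cdot)$ is constant on $[0,\alpha b_2]$ with common value $E_1(x,\alpha b_2)$, and by \eqref{eq:e1-E2} this equals $E_2(x,\alpha b_2)$; since $s_2(\beta b_2)=\alpha b_2$ and $E_2(x,\cdot)$ is strictly convex, $E(x,u)>E(x,\alpha b_2)$ for every $u>\alpha b_2$. Thus $E(x,\cdot)$ attains its minimum at exactly the points of $[0,\alpha b_2]$ and nowhere else, so $\prox_{\beta f_\alpha}(\beta b_2)=[0,\alpha b_2]$, giving the middle branch of \eqref{eq:general-prox-case2}. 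The main obstacle is purely bookkeeping: confirming that the vanishing of both the quadratic and the linear coefficient of $E_1$ happens simultaneously at $x=\beta b_2$, and that $E_2$ contributes no smaller value at that point.
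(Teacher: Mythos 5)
Your proposal is correct and follows essentially the same route as the paper: it computes $s_1(x)$ and $s_2(x)$ explicitly (obtaining exactly the paper's formula for $s_1$, with the set-valued branch at $x=\beta b_2$), and then decides between them on each range of $x$ via the containment \eqref{eq:prox-s1cups2} and the matching identity \eqref{eq:e1-E2}. The only difference is presentational --- where the paper appeals to the region-by-region figure argument of Lemma~\ref{lemma:b2>0:case1}, you carry out the same comparisons directly from the affineness of $E_1(x,\cdot)$ and the strict convexity of $E_2(x,\cdot)$, which makes the argument self-contained but is not a different method.
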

\begin{proof}\ \ Similar to the proof of Lemma~\ref{lemma:b2>0:case1}, we first give the explicit form of the set $s_1(x)$:
$$
s_1(x)=\left\{
         \begin{array}{ll}
           0, & \hbox{if $0\le x < \beta b_2$;} \\
          {[0, \alpha b_2]}, & \hbox{if $x=\beta b_2$;} \\
           \alpha b_2, & \hbox{if $x >\beta b_2$.}
         \end{array}
       \right.
$$
We note that $\prox_{\beta f_\alpha}$ can be set-valued only at $\beta b_2$.

In Figure~\ref{figure:b2>0:case2}, two vertical lines $x=0$ and $x=\beta b_2$,  and two horizontal lines $u=0$ and $u=\alpha b_2$ partition the first quadrant into four rectangular regions (I to IV).  The solid red line is the graph of $s_1(x)$ while the dashed blue line is the graph of $s_2(x)$. It is identical to Figure \ref{figure:b2>0:case1} with the middle regions collapsed to a line. Following the same reasoning as in Lemma \ref{lemma:b2>0:case1}, we see that \eqref{eq:general-prox-case2} holds.
\begin{figure}[h]
\centering
\begin{tabular}{ccc}
\includegraphics[scale=0.45]{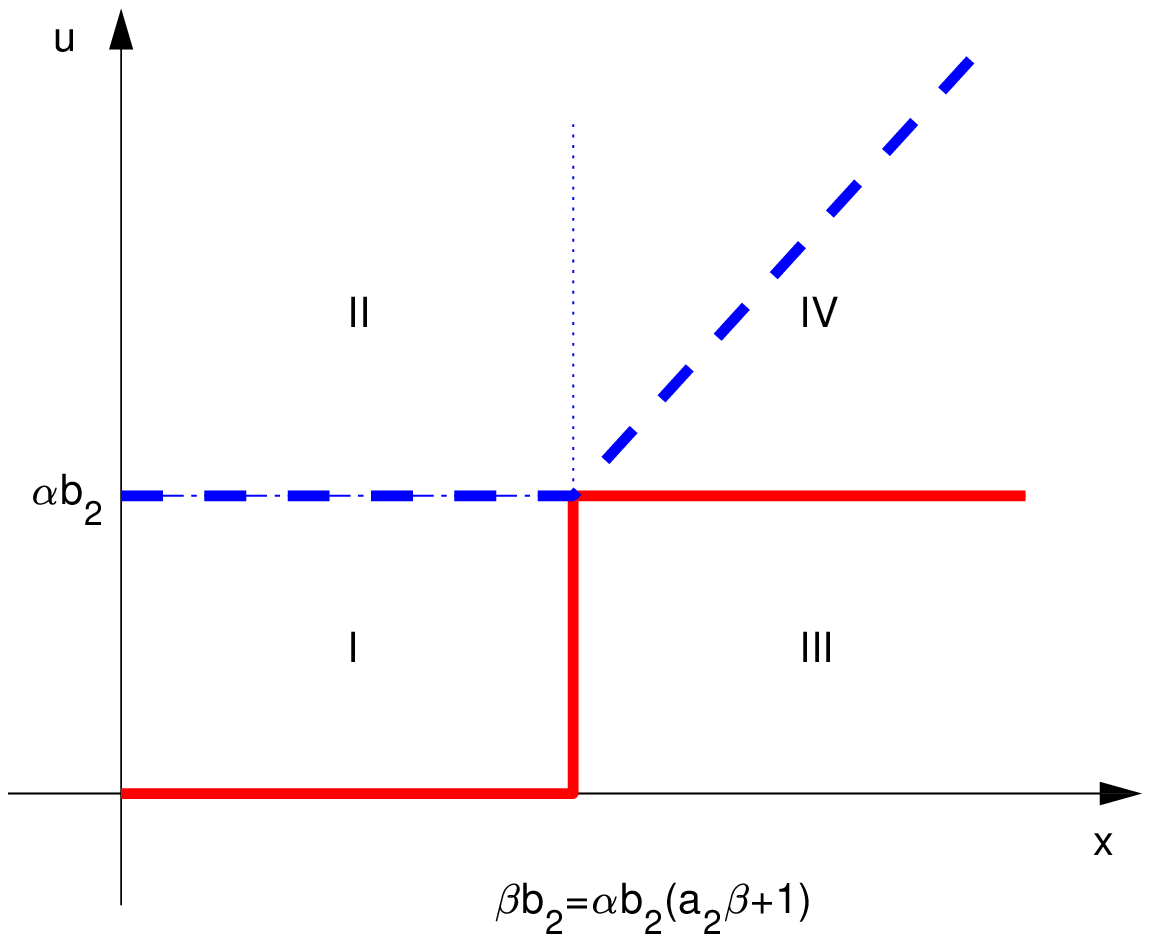} & & \includegraphics[scale=0.45]{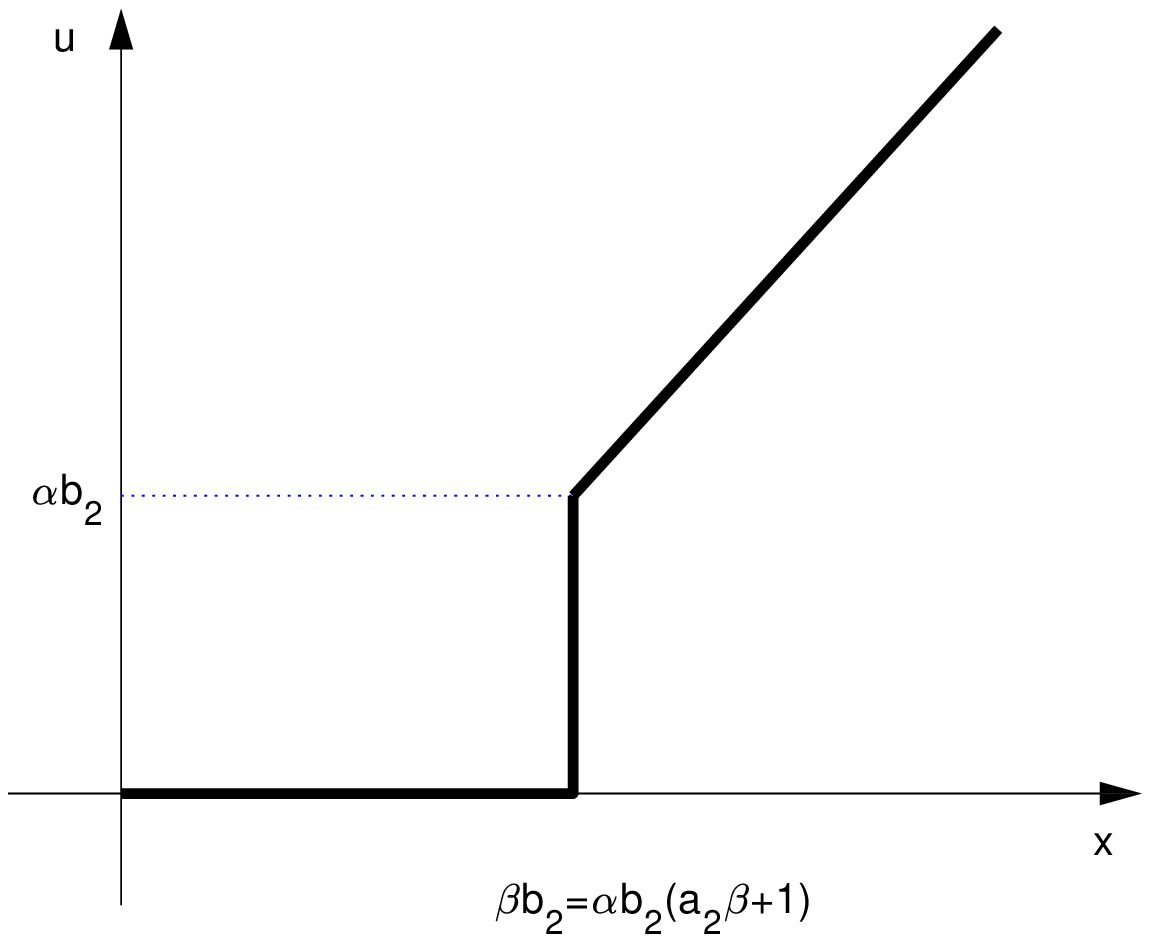} \\
(a) & & (b)
\end{tabular}
\caption{An illustration of case  (ii): $b_2 > 0$ and $\alpha b_2 (a_2 \beta + 1) = \beta b_2$. The graphs of (a) $s_1(x)$ (solid) and $s_2(x)$ (dashed) and (b) the resulting proximity operator $\prox_{\beta f_\alpha}(x)$.}\label{figure:b2>0:case2}
\end{figure}
\end{proof}

Finally, we consider case (iii). Because $\beta b_2$ and $\alpha b_2 (a_2 \beta + 1)$ have now switched positions, we see that we must take care when dealing with the intermediate $x$ values.

\begin{lemma}\label{lemma:b2>0:case3}
Let $f$ be a piecewise quadratic sparsity promoting function as defined by \eqref{def:quad}. Define
$$
\tau^+ = \frac{\alpha a_2 \beta b_2}{\alpha a_2 + 1} + \frac{\sqrt{\alpha \beta (\alpha a_2^2 \beta + \alpha a_2 + 1)}b_2}{\alpha a_2 + 1}.
$$
If $b_2>0$ and  $\alpha b_2 (a_2 \beta + 1) < \beta b_2$,
\begin{equation}\label{eq:general-prox-case3}
\prox_{\beta f_\alpha}(x) =\left\{
         \begin{array}{ll}
           0, & \hbox{if $0\le x < \tau^+$;} \\
          \left\{0, \frac{\alpha a_2 + 1}{\alpha a_2(a_2\beta + 1)+1}\left(\tau^+-\frac{\alpha a_2 \beta b_2}{\alpha a_2 + 1}\right)\right\}, & \hbox{if $x=\tau^+$;} \\
           \frac{\alpha a_2 + 1}{\alpha a_2(a_2\beta + 1)+1}\left(x-\frac{\alpha a_2 \beta b_2}{\alpha a_2 + 1}\right), & \hbox{if $x >\tau^+$.}
         \end{array}
       \right.
\end{equation}
\end{lemma}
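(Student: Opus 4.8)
The plan is to determine the global minimizer of $E(x,\cdot)$ over $[0,\infty)$ by the endpoint-comparison strategy of Lemmas~\ref{lemma:b2>0:case1} and \ref{lemma:b2>0:case2}, now exploiting the concavity of $E_1(x,\cdot)$ that the hypothesis forces. The condition $\alpha b_2(a_2\beta+1)<\beta b_2$ is equivalent to $a_2-\frac{1}{\alpha}+\frac{1}{\beta}<0$, so by \eqref{eq:E_1} the quadratic $E_1(x,\cdot)$ has negative leading coefficient and is strictly concave on $[0,\alpha b_2]$. A concave function on a compact interval attains its minimum only at an endpoint, so $s_1(x)\subseteq\{0,\alpha b_2\}$ for every $x\ge 0$. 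This is the qualitative departure from cases (i) and (ii): no interior stationary point of $E_1$ is available, so the minimizer must jump between $u=0$ and the far branch.

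Next I would show that the candidate $u=\alpha b_2$ never yields the global minimum unless it coincides with $s_2(x)$. By \eqref{eq:e1-E2} we have $E_1(x,\alpha b_2)=E_2(x,\alpha b_2)$, and since $E_2(x,\cdot)$ is strictly convex with minimizer $s_2(x)$ on $[\alpha b_2,\infty)$, we get $E_2(x,s_2(x))\le E_2(x,\alpha b_2)=E_1(x,\alpha b_2)$, with equality only when $s_2(x)=\alpha b_2$. Combined with \eqref{eq:prox-s1cups2}, this reduces the computation of $\prox_{\beta f_\alpha}(x)$ to the two-point comparison $\argmin_{u\in\{0,\,s_2(x)\}}E(x,u)$; equivalently I compare $g_0(x):=E(x,0)=\frac{1}{2\beta}x^2$ against $g_2(x):=E_2(x,s_2(x))$.

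The heart of the proof is locating the threshold $\tau^+$ at which this comparison switches. On $0\le x\le\alpha b_2(a_2\beta+1)$ we have $s_2(x)=\alpha b_2$ by \eqref{eq:s2}, and evaluating $g_0-g_2=E_1(x,0)-E_1(x,\alpha b_2)$ together with the case-(iii) inequality shows $g_0(x)<g_2(x)$ throughout, so $u=0$ strictly wins there. On $x\ge\alpha b_2(a_2\beta+1)$, where $s_2(x)$ is the moving linear branch of \eqref{eq:s2}, substituting its closed form into \eqref{eq:E_2} gives $g_0(x)-g_2(x)=\frac{1}{2A}\big(\frac{x}{\beta}-B\big)^2-C$ with $A=\frac{\alpha a_2^2}{\alpha a_2+1}+\frac{1}{\beta}$, $B=\frac{\alpha a_2 b_2}{\alpha a_2+1}$, and $C=\frac{\alpha b_2^2}{2(\alpha a_2+1)}$. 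Since $\frac{x}{\beta}-B>0$ on this range, $g_0-g_2$ is strictly increasing and vanishes exactly at $x=\beta\big(B+\sqrt{2AC}\big)$; a short algebraic simplification identifies this root with the stated $\tau^+$. Because $g_0<g_2$ at the left endpoint $\alpha b_2(a_2\beta+1)$, the root satisfies $\tau^+>\alpha b_2(a_2\beta+1)$, confirming that $s_2(\tau^+)$ genuinely lies on the linear branch.

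Assembling these facts yields \eqref{eq:general-prox-case3}: $g_0<g_2$ (hence $\prox_{\beta f_\alpha}(x)=0$) for $0\le x<\tau^+$, $g_0=g_2$ (hence both $0$ and $s_2(\tau^+)$ minimize) at $x=\tau^+$, and $g_0>g_2$ (hence $\prox_{\beta f_\alpha}(x)=s_2(x)$) for $x>\tau^+$, with the explicit expressions coming from \eqref{eq:s2}. The main obstacle I anticipate is bookkeeping rather than conceptual: one must treat the kink of $s_2$ at $\alpha b_2(a_2\beta+1)$ carefully to guarantee that the crossover sits on the linear piece, and then carry out the simplification turning $\beta(B+\sqrt{2AC})$ into the displayed $\tau^+$. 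The phenomenon to highlight is that concavity of $E_1$ makes $\prox_{\beta f_\alpha}$ jump discontinuously from $0$ straight to the far-branch value $s_2(\tau^+)>0$, skipping the intermediate linear piece present in case (i).
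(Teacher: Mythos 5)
Your proposal is correct and follows essentially the same route as the paper: both reduce the problem to comparing $E_1(x,0)$ against $E_2(x,s_2(x))$ and locate $\tau^+$ as the unique relevant root of the same quadratic $\frac{1}{2A}\bigl(\frac{x}{\beta}-B\bigr)^2-C$. Your domination step $E_1(x,\alpha b_2)=E_2(x,\alpha b_2)\ge E_2(x,s_2(x))$ is a tidy shortcut that lets you skip the paper's explicit computation of the switch point of $s_1$ at $\frac{1}{2}(\alpha b_2(a_2\beta+1)+\beta b_2)$ and its six-region figure argument, but the substance of the proof is the same.
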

\begin{proof}\ \ Again, we first give the explicit form of the set $s_1(x)$. Note that $E_1(x, \cdot)$ is concave in this case, so the minimum occurs at the endpoints according to the position of the vertex. Thus,
$$
s_1(x)=\left\{
         \begin{array}{ll}
           0, & \hbox{if $0\le x < \frac{1}{2}(\alpha b_2(a_2\beta+1)+\beta b_2)$;} \\
          \{0, \alpha b_2\}, & \hbox{if $x=\frac{1}{2}(\alpha b_2(a_2\beta+1)+\beta b_2)$;} \\
           \alpha b_2, & \hbox{if $x>\frac{1}{2}(\alpha b_2(a_2\beta+1)+\beta b_2)$.}
         \end{array}
       \right.
$$
This is set-valued at $\frac{1}{2}(\alpha b_2(a_2\beta+1)+\beta b_2)$.

As before, we plot $s_1(x)$ and $s_2(x)$ in Figure \ref{figure:b2>0:case3}. Three vertical lines $x=0$, $x=\alpha b_2(a_2\beta+1)$, and $x=\frac{1}{2}(\alpha b_2(a_2\beta+1)+\beta b_2)$,  and two horizontal lines $u=0$ and $u=\alpha b_2$ partition the first quadrant into six rectangular regions as shown in Figure~\ref{figure:b2>0:case3}(a).The solid red line is the graph of $s_1(x)$ while the dashed blue line is the graph of $s_2(x)$. From this figure and \eqref{eq:e1-E2}, it is easy to see that regions I, II, V, and VI behave as in the previous cases. That is, $\prox_{\beta f_\alpha}(x) = s_1(x)$ for $0 \leq x \leq \alpha b_2(a_2 \beta + 1)$ and $\prox_{\beta f_\alpha}(x) = s_2(x)$ for $x \ge \frac{1}{2}(\alpha b_2(a_2 \beta + 1) + \beta b_2)$.

To find the expression of $\prox_{\beta f_\alpha}(x)$ for $\alpha b_2(a_2\beta+1)<x< \frac{1}{2}(\alpha b_2(a_2\beta+1)+\beta b_2)$, from the solid red line and the dashed blue in regions III and IV, we need to compare the value of $E_1(x,0)$ with $E_2(x,s_2(x))$. Using \eqref{eq:s2}, a direct computation gives
$$
E_2(x,s_2(x)) - E_1(x,0) = - \frac{\alpha a_2 + 1}{2\beta(\alpha a_2(a_2 \beta + 1)+1)}\left(x - \frac{\alpha a_2 \beta b_2}{\alpha a_2 + 1}\right)^2 + \frac{\alpha b_2^2}{2(\alpha a_2 + 1)}.
$$
Notice that $E_2(x,s_2(x)) - E_1(x,0)>0$ at $x=\alpha b_2(a_2\beta+1)$ and $E_2(x,s_2(x)) - E_1(x,0)<0$ at $x=\frac{1}{2}(\alpha b_2(a_2\beta+1)+\beta b_2)$. Hence, the quadratic polynomial $E_2(x,s_2(x)) - E_1(x,0)$ has only one root at $\tau^+$ that is between $\alpha b_2 (a_2 \beta + 1)$ and $ \frac{1}{2}(\alpha b_2(a_2\beta+1)+\beta b_2)$. So, the result of this lemma holds and is illustrated in Figure~\ref{figure:b2>0:case3}(c).
\begin{figure}[h]
\centering
\begin{tabular}{ccc}
\includegraphics[scale=0.45]{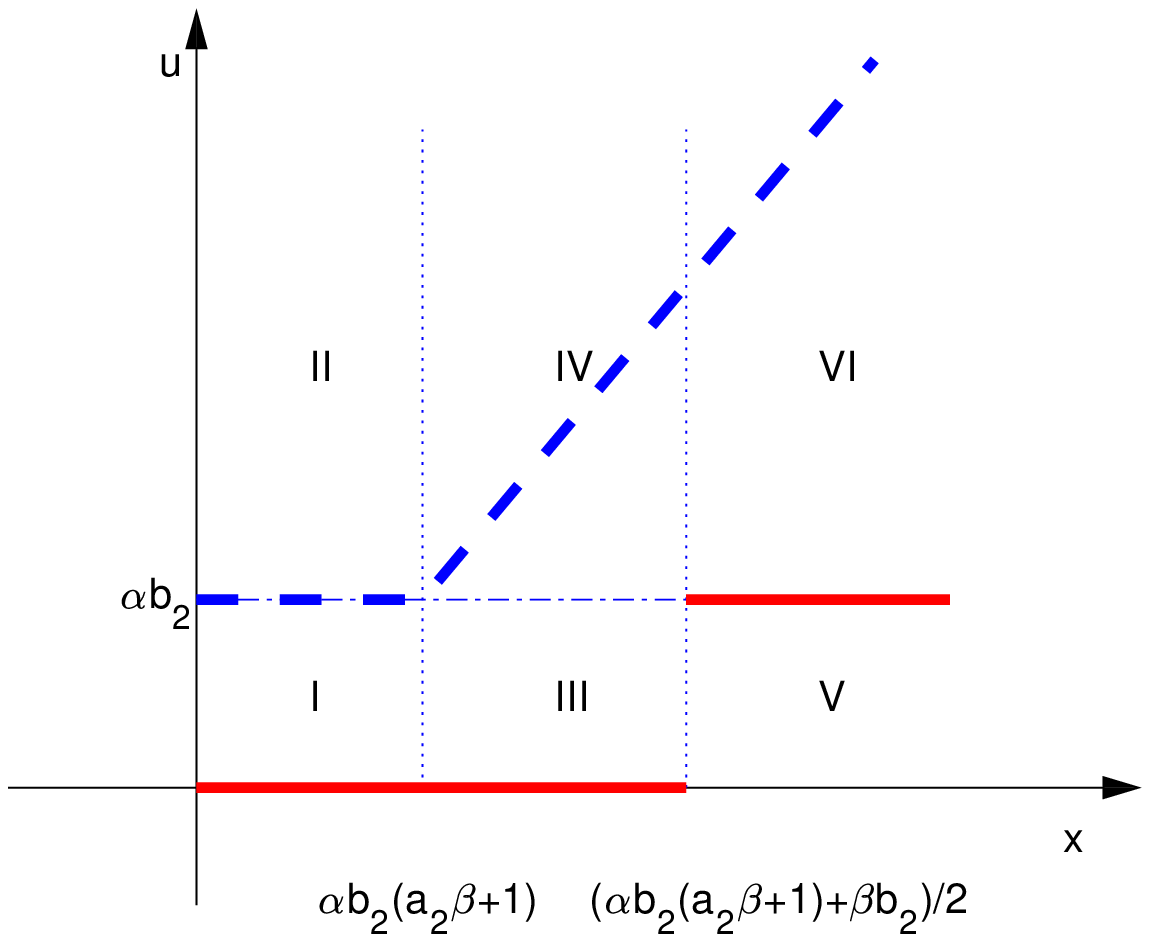} &  \includegraphics[scale=0.45]{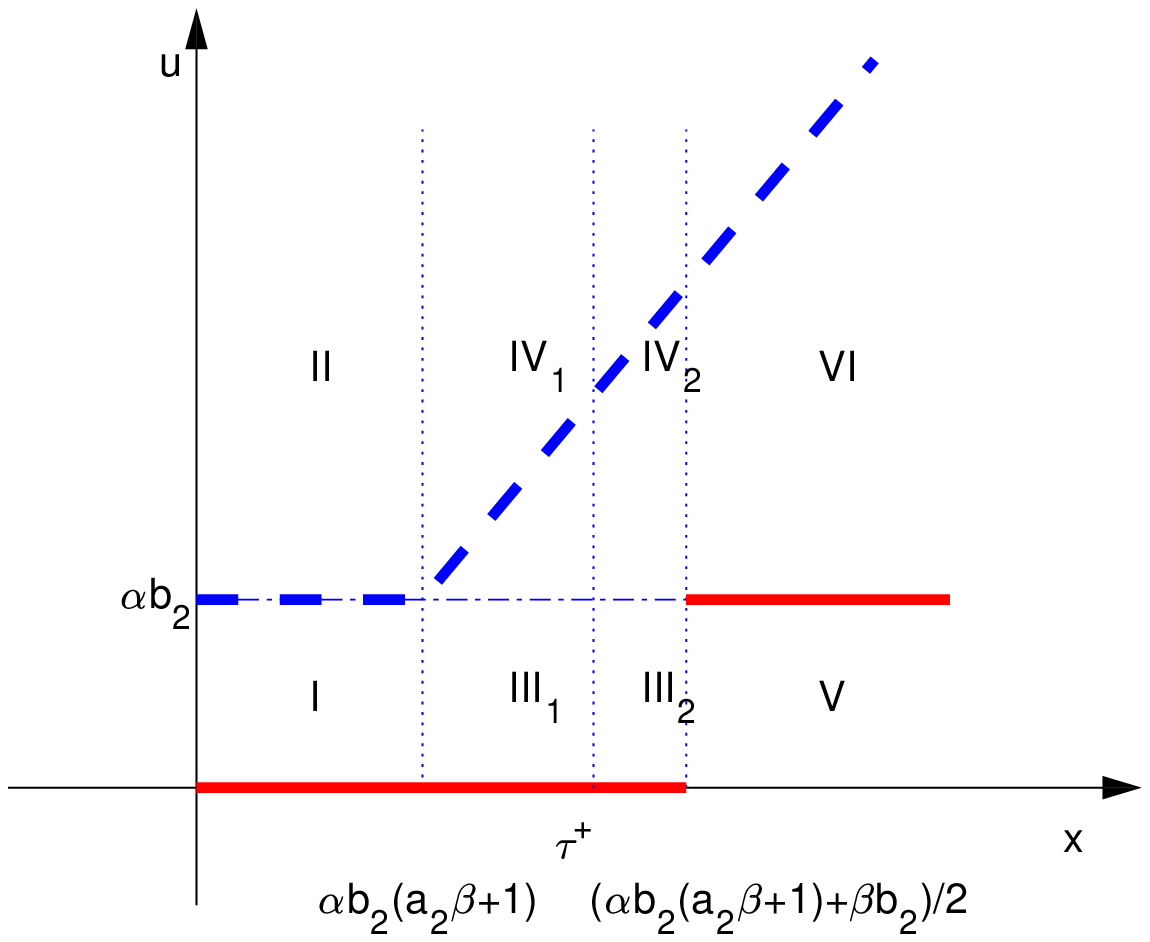} & \includegraphics[scale=0.45]{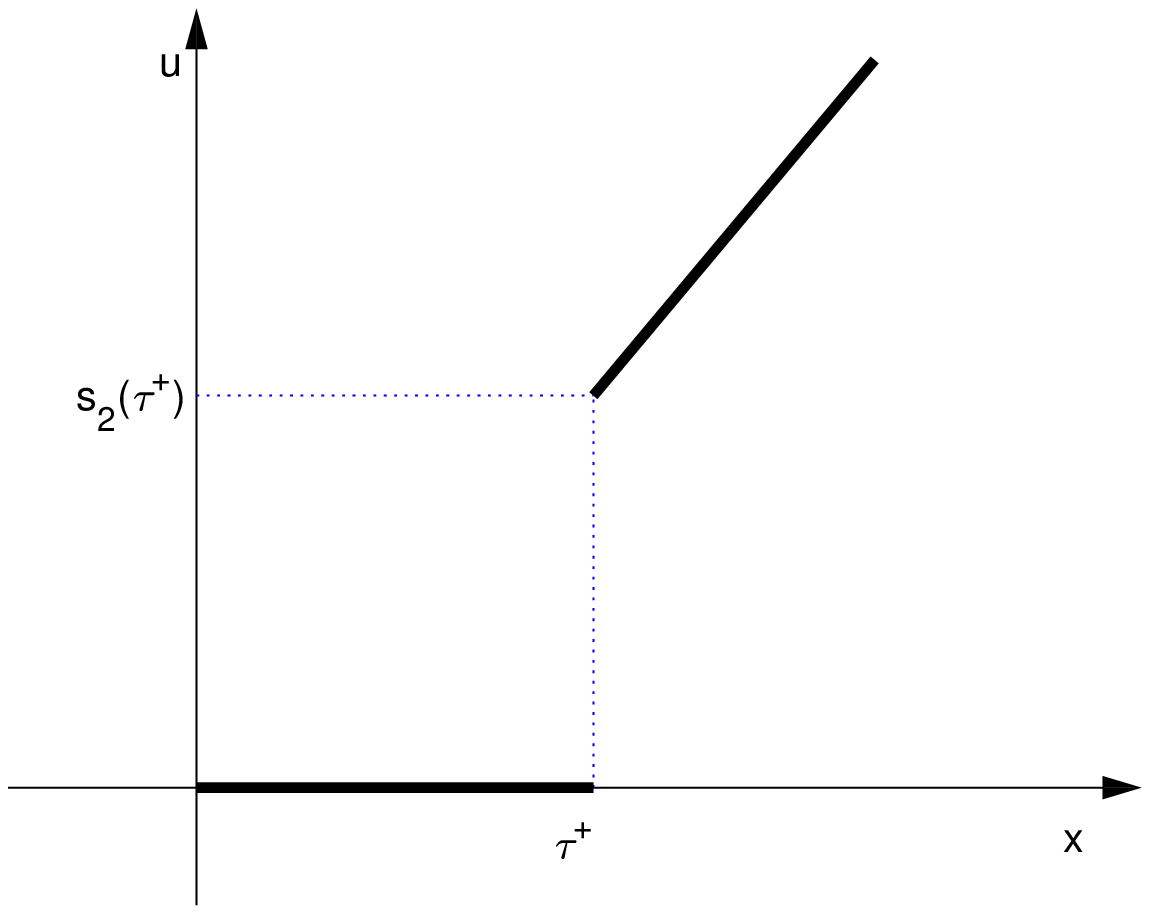}\\
(a) &  (b) & (c)
\end{tabular}
\caption{An illustration of case (iii): $b_2 > 0$ and $\alpha b_2 (a_2 \beta + 1) < \beta b_2$. The graphs of (a), (b) $s_1(x)$ (solid) and $s_2(x)$ (dashed) and (c) the resulting proximity operator $\prox_{\beta f_\alpha}(x)$.}\label{figure:b2>0:case3}
\end{figure}
\end{proof}

With the above results, we know $\prox_{\beta f_\alpha}(x)$ for $x \geq 0$. The following lemma extends these results to $x \leq 0$.
\begin{lemma}\label{lemma:x<=0}
Let $f$ be a piecewise quadratic sparsity promoting function as defined by \eqref{def:quad}. Define $g: x \mapsto f(-x)$. Then for $x\le 0$ and any positive numbers $\alpha$ and $\beta$, we have $\prox_{\beta f_\alpha}(x)= -\prox_{\beta g_\alpha}(-x)$ where $\prox_{\beta g_\alpha}(-x)$ can be evaluated using the results in Lemmas~\ref{lemma:b2=0}-\ref{lemma:b2>0:case3}.
\end{lemma}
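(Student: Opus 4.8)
The plan is to reduce the case $x \le 0$ to the already-resolved nonnegative case by means of the reflection $g(x) = f(-x)$, exploiting the symmetry recorded in Theorem~\ref{thm:sparsity}(ii), namely $g_\alpha = f_\alpha(-\cdot)$.

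First I would check that $g$ is itself a piecewise quadratic sparsity promoting function, so that Lemmas~\ref{lemma:b2=0}--\ref{lemma:b2>0:case3} are genuinely applicable to it. Writing $g(x) = f(-x)$ out piecewise shows that $g$ has the form \eqref{def:quad} with coefficients $(a_2, a_1, -b_2, -b_1)$ in place of $(a_1,a_2,b_1,b_2)$, and these are readily seen to satisfy \eqref{eq:requirements}: $a_2, a_1 \ge 0$, $-b_2 \le 0 \le -b_1$, and $(-b_1)-(-b_2) = b_2 - b_1 > 0$. In particular, since $x \le 0$ forces $-x \ge 0$, the quantity $\prox_{\beta g_\alpha}(-x)$ is evaluated by exactly the lemmas cited.

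The heart of the argument is then a single change of variables. Since $f_\alpha(u) = g_\alpha(-u)$ for all $u$, the substitution $u = -v$ in
$$
\prox_{\beta f_\alpha}(x) = \argmin_{u \in \mathbb{R}} \Big\{ f_\alpha(u) + \tfrac{1}{2\beta}(u-x)^2 \Big\},
$$
together with the identity $(u-x)^2 = (-v-x)^2 = (v-(-x))^2$, identifies the objective with $g_\alpha(v) + \frac{1}{2\beta}(v-(-x))^2$; hence the minimizing values of $u$ are precisely the negatives of the minimizing values of $v$, which gives $\prox_{\beta f_\alpha}(x) = -\prox_{\beta g_\alpha}(-x)$.

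There is no obstacle of substance here, as the statement is a pure reflection symmetry. The only point requiring care is the bookkeeping in the set-valued situations (Lemmas~\ref{lemma:b2>0:case2} and~\ref{lemma:b2>0:case3}): because $v \mapsto -v$ is a bijection of $\mathbb{R}$, it carries the entire argmin set to its negative, so the claimed identity holds as an equality of sets and not merely as an equality of single minimizers.
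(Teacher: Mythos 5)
Your proposal is correct and follows essentially the same route as the paper: both verify that $g$ is a piecewise quadratic sparsity promoting function (with coefficients $(a_2,a_1,-b_2,-b_1)$) so the preceding lemmas apply, and both derive $\prox_{\beta f_\alpha}(x) = -\prox_{\beta g_\alpha}(-x)$ from the reflection identity $f_\alpha = g_\alpha(-\cdot)$ of Theorem~\ref{thm:sparsity}(ii). Your explicit change of variables $u=-v$ in the argmin and the remark about set-valued cases merely spell out details the paper leaves implicit.
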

\begin{proof} Since $f$ is sparsity promoting, so is $g$ by Theorem~\ref{thm:sparsity}. Moreover, $f_\alpha = g_\alpha(-\cdot)$ which leads to $\prox_{\beta f_\alpha}(x)= -\prox_{\beta g_\alpha}(-x)$ for all $x$. Note that
\begin{equation*}
g(x) = \begin{cases} \frac{1}{2}a_2x^2 - b_2x, & \text{if } x \leq 0;\\
\frac{1}{2}a_1x^2 - b_1x, & \text{if } x \ge 0,
\end{cases}
\end{equation*}
which is a piecewise quadratic sparsity promoting function. All results developed in Lemmas~\ref{lemma:b2=0}-\ref{lemma:b2>0:case3} can be applied for $g$. Therefore, the results of this lemma follow immediately.
\end{proof}

In summary, we have the following result.
\begin{theorem}\label{thm:sparsequadratic}
If $f \in \Gamma_0(\mathbb{R})$ is a quadratic sparsity promoting function as defined by \eqref{def:quad}, then the following statements hold.
\begin{itemize}
\item[(i)] $\prox_{\beta f_\alpha}$ is set-valued at at most one point at each side of the origin. Moreover, $\prox_{\beta f_\alpha}$ is piecewise linear on any interval not containing these possible set-valued points.
\item[(ii)] For any $p \in \prox_{\beta f_\alpha}(x)$, $|p| \leq |x|$. Furthermore, $\sign(p) = \sign(x)$ if both $p$ and $x$ are nonzero.
\end{itemize}
\end{theorem}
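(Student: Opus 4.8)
The plan is to assemble Theorem~\ref{thm:sparsequadratic} from the case analysis already carried out in Lemmas~\ref{lemma:b2=0}--\ref{lemma:x<=0}, supplying only the two short arguments that tie the pieces together.

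For part (i), I would first treat $x \ge 0$. If $b_2 = 0$, Lemma~\ref{lemma:b2=0} gives $\prox_{\beta f_\alpha}(x) = s_2(x)$, which by \eqref{eq:s2} is single-valued and piecewise linear, so there is no set-valued point. If $b_2 > 0$, exactly one of the three mutually exclusive cases in Lemmas~\ref{lemma:b2>0:case1}--\ref{lemma:b2>0:case3} applies; inspecting \eqref{eq:general-prox-case1}--\eqref{eq:general-prox-case3} shows that $\prox_{\beta f_\alpha}$ is single-valued and piecewise linear except possibly at a single abscissa (none in case (i), $x = \beta b_2$ in case (ii), and $x = \tau^+$ in case (iii)). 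Thus on $[0,\infty)$ there is at most one set-valued point, and the operator is piecewise linear on any interval avoiding it. For $x \le 0$, Lemma~\ref{lemma:x<=0} gives $\prox_{\beta f_\alpha}(x) = -\prox_{\beta g_\alpha}(-x)$ with $g$ again a piecewise quadratic sparsity promoting function, so the same dichotomy produces at most one set-valued point on $(-\infty,0]$ and piecewise linearity elsewhere. Since reflection preserves both properties, part (i) follows.

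For part (ii), the sign statement is immediate from Lemma~\ref{lem:sparsity_technical2}(i): for $x > 0$ we have $\overline{B_{\|x\|}}(x) = [0, 2x]$, so every $p \in \prox_{\beta f_\alpha}(x)$ satisfies $p \ge 0$, and hence $\sign(p) = \sign(x)$ whenever $p \neq 0$; the case $x < 0$ is symmetric. For the shrinkage bound I would exploit the monotonicity recorded in Lemma~\ref{lem:sparsequadratic}(i). Fix $x > 0$ and $p \in \prox_{\beta f_\alpha}(x)$, so $p \ge 0$ by the sign argument. If $p > x$, then because $f_\alpha$ is nondecreasing on $[0,\infty)$ we have $f_\alpha(x) \le f_\alpha(p)$, while $(p-x)^2 > 0$, giving
\[
F(x) = f_\alpha(x) \le f_\alpha(p) < f_\alpha(p) + \tfrac{1}{2\beta}(p-x)^2 = F(p),
\]
which contradicts $p$ minimizing $F(u) = f_\alpha(u) + \tfrac{1}{2\beta}(u-x)^2$. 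Hence $0 \le p \le x$, i.e. $|p| \le |x|$; the case $x < 0$ follows from the reflection $g$, and $x = 0$ is trivial.

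The argument is essentially bookkeeping, so there is no deep obstacle; the only point requiring genuine (if brief) work is the shrinkage inequality, since Lemma~\ref{lem:sparsity_technical2}(i) alone only confines $p$ to $[0,2x]$ and does not by itself rule out $p > x$. The monotonicity of $f_\alpha$ supplies exactly the missing comparison. The remaining care is purely organizational: verifying that each of the mutually exclusive cases for $x \ge 0$ contributes at most one set-valued abscissa and that the reflection in Lemma~\ref{lemma:x<=0} transports this structure faithfully to $x \le 0$.
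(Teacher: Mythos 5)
Your proposal is correct and follows essentially the same route as the paper, whose entire proof is the observation that both claims can be read off from the explicit formulas in Lemmas~\ref{lemma:b2=0}--\ref{lemma:x<=0}; your part (i) is precisely that bookkeeping made explicit. The only variation is in part (ii), where instead of inspecting the formulas you derive the shrinkage bound from Lemma~\ref{lem:sparsity_technical2}(i) together with the monotonicity of $f_\alpha$ from Lemma~\ref{lem:sparsequadratic}(i); this is a valid and somewhat more conceptual substitute that does not change the overall structure of the argument.
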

\begin{proof} \ \ All results follows directly from the expressions of $\prox_{\beta f_\alpha}(x)$ given in Lemma~\ref{lemma:b2=0}-Lemma~\ref{lemma:x<=0}.
\end{proof}

\begin{remark}
Theorem~\ref{thm:sparsequadratic} guarantees that $\prox_{\beta f_\alpha}$ will be a thresholding operator for any $f_\alpha$ given by \eqref{def:falpha_quad}. Furthermore, Lemmas~\ref{lemma:b2=0}-\ref{lemma:b2>0:case3} provide detailed and easily customizable forms which can be tailored to applications.
\end{remark}

\subsection{Piecewise Quadratic on Intervals}
Let $C$ be a closed interval containing the origin and $f$ a piecewise quadratic function defined by \eqref{def:quad}. We consider a function $\widetilde{f}$ that is the restriction of $f$ on the interval $C$ as follows:
\begin{equation}\label{def:ftidle}\tag{$\widetilde{\mathcal{Q}}$}
\widetilde{f} = f+\iota_C.
\end{equation}
\begin{lemma}\label{lemma:f+iota}
Let $f$ be a piecewise quadratic sparsity promoting function defined by \eqref{def:quad} and let $C$ be a closed interval on $\mathbb{R}$ such that $\{0\} \subsetneq \partial f(0) \cap C$. Then  $\widetilde{f}$ defined in \eqref{def:ftidle} is a sparsity promoting function. Moreover,
\begin{equation}\tag{$\widetilde{\mathcal{Q}}_\alpha$}
\widetilde{f}_\alpha = f_\alpha + \iota_C.
\end{equation}
\end{lemma}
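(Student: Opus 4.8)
The plan is to dispatch the two claims separately: first that $\widetilde f$ is sparsity promoting, then the splitting identity $\widetilde f_\alpha = f_\alpha + \iota_C$.

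For sparsity promotion, I would first observe that $\widetilde f = f + \iota_C \in \Gamma_0(\mathbb{R})$, being a sum of convex functions which is proper because the hypothesis $\{0\} \subsetneq \partial f(0) \cap C$ guarantees $0 \in C$. Since $f$ is sparsity promoting we have $f \ge 0$ and $f(0)=0$, so $\widetilde f \ge f \ge 0$ with $\widetilde f(0)=0$; this gives condition (i). For condition (ii) I would invoke the convex subdifferential sum rule --- valid since $\ri(\dom f)\cap\ri(C)=\ri(C)\neq\emptyset$, the nondegeneracy of $C$ again coming from the hypothesis --- to write $\partial\widetilde f(0)=\partial f(0)+N_C(0)$. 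Because $0\in N_C(0)$ and $\partial f(0)=[b_1,b_2]$ by Lemma~\ref{lemma:NS-conditions-quadratic}, the nonzero point of $\partial f(0)\cap C$ furnished by the hypothesis lies in $\partial\widetilde f(0)$, so $\widetilde f$ is sparsity promoting.

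For the identity, note $\env_{\alpha\widetilde f}$ is finite everywhere whereas $\iota_C=+\infty$ off $C$, so both $\widetilde f_\alpha$ and $f_\alpha+\iota_C$ equal $+\infty$ outside $C$; it therefore suffices to prove equality on $C$. Fix $x\in C$. There $\widetilde f(x)=f(x)$, so the claim $\widetilde f_\alpha(x)=f_\alpha(x)$ collapses to $\env_{\alpha\widetilde f}(x)=\env_{\alpha f}(x)$. The inequality $\env_{\alpha\widetilde f}(x)=\inf_{u\in C}\{f(u)+\tfrac{1}{2\alpha}(u-x)^2\}\ge\env_{\alpha f}(x)$ is immediate, since we minimize over a smaller set. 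The reverse --- hence the whole lemma --- reduces to showing that the unconstrained minimizer $\prox_{\alpha f}(x)$ already belongs to $C$, for then it is feasible in the constrained problem and the two infima coincide.

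The inclusion $\prox_{\alpha f}(x)\in C$ is the main obstacle, and it is precisely here that the interval structure of $C$ is used. I would establish it either from the explicit formula for $\prox_{\alpha f}$ recorded after Lemma~\ref{lemma:NS-conditions-quadratic}, which exhibits $\prox_{\alpha f}(x)$ as lying between $0$ and $x$, or more conceptually from the shrinkage bound $|\prox_{\alpha f}(x)|\le|x|$ of Lemma~\ref{lemma:convex-prox-sparsity}(ii) together with the sign-preservation noted in the remark following Lemma~\ref{lem:sparsity_technical2}. Either route places $\prox_{\alpha f}(x)$ on the segment joining $0$ and $x$; since $C$ is a closed interval containing both $0$ and $x$, it contains that segment, so $\prox_{\alpha f}(x)\in C$. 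Once $\prox_{\alpha f}(C)\subseteq C$ is secured the remaining bookkeeping is routine, and the hypotheses enter only through $0\in C$ (needed for the minimum and for the segment) and the convexity/nondegeneracy of the interval $C$.
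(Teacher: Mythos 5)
Your proposal is correct and follows essentially the same route as the paper: both reduce the identity to showing $\env_{\alpha\widetilde f}=\env_{\alpha f}$ on $C$ via the inclusion $\prox_{\alpha f}(x)\in C$ for $x\in C$, obtained from the shrinkage bound of Lemma~\ref{lemma:convex-prox-sparsity}(ii) together with the ball/sign observation accompanying Lemma~\ref{lem:sparsity_technical2} and the fact that $C$ is an interval containing $0$ and $x$. Your treatment of the sparsity-promoting part via the subdifferential sum rule is, if anything, slightly more careful than the paper's bare assertion that $\partial\widetilde f(0)=\partial f(0)$.
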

\begin{proof}\ \  Since $f$ is sparsity promoting, $\min_{x\in \mathbb{R}} f(x) = f(0) = 0$. Because  $\{0\} \subsetneq \partial f(0) \cap C$, we know that $\widetilde{f}(0)=\min_{x\in C} f(x)=\min_{x\in \mathbb{R}} f(x)=0$. That is, $\widetilde{f}$ achieves its minimum at the origin. We further know that $\partial f(0) = \partial \widetilde{f}(0)$, hence $\{0\} \subsetneq \partial f(0) \cap C = \partial \widetilde{f}(0) \cap C$. Therefore, $\widetilde{f}$ is sparsity promoting.

By Lemma~\ref{lemma:convex-prox-sparsity} and Lemma~\ref{lem:sparsity_technical2}, $\prox_{\alpha f} (x) \in C$ if $x \in C$. This indicates that for $x \in C$
$$
\env_{\alpha f}(x)=\min_{u \in \mathbb{R}} \left\{f(u)+\frac{1}{2\alpha}(u-x)^2\right\} = \min_{u \in C} \left\{f(u)+\frac{1}{2\alpha}(u-x)^2\right\}=\env_{\alpha \widetilde{f}}(x).
$$
The above identities yield $\widetilde{f}_\alpha = f_\alpha + \iota_C$. This completes the proof of the result.
\end{proof}

By the above lemma, for $\widetilde{f}$ defined in \eqref{def:ftidle} we always assume that the coefficients in $f$ satisfy \eqref{eq:requirements} and that $C=[\lambda_1, \lambda_2]$ with $\lambda_1\le 0 \le \lambda_2$ and $\lambda_2-\lambda_1>0$.

\begin{theorem}\label{thm:f+iota}
Let $\widetilde{f}$ be defined in \eqref{def:ftidle}, let $x \in \mathbb{R}$, an let $\alpha$ and $\beta$ be two positive numbers. Then the following statements hold.
\begin{itemize}
\item[\normalfont(i)] If the set $\prox_{\beta {f}_\alpha}(x) \cap C$ is not empty, then $\prox_{\beta {f}_\alpha}(x)\cap C \subseteq \prox_{\beta \widetilde{f}_\alpha}(x)$;

\item[\normalfont(ii)] If $\lambda_2 \in  \prox_{\beta \widetilde{f}_\alpha}(x)$, then $\lambda_2 \in  \prox_{\beta \widetilde{f}_\alpha}(y)$ for all $y>x$;

\item[\normalfont(iii)] If $\lambda_1 \in  \prox_{\beta \widetilde{f}_\alpha}(x)$, then $\lambda_1 \in  \prox_{\beta \widetilde{f}_\alpha}(y)$ for all $y<x$;

\end{itemize}
\end{theorem}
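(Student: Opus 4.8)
The plan is to reduce the entire statement to the constrained minimization characterization furnished by Lemma~\ref{lemma:f+iota}. Since that lemma gives $\widetilde{f}_\alpha = f_\alpha + \iota_C$, for every $x$ we may write
$$
\prox_{\beta \widetilde{f}_\alpha}(x) = \argmin_{u \in C} E(x,u), \qquad E(x,u) := f_\alpha(u) + \frac{1}{2\beta}(u-x)^2,
$$
the same objective whose unconstrained minimizers give $\prox_{\beta f_\alpha}(x)$. Part (i) is then immediate: if $p \in \prox_{\beta f_\alpha}(x) \cap C$, then $p$ minimizes $E(x,\cdot)$ over all of $\mathbb{R}$ while lying in $C$, so it a fortiori minimizes $E(x,\cdot)$ over the smaller feasible set $C$; hence $p \in \prox_{\beta \widetilde{f}_\alpha}(x)$. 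No structure of $f_\alpha$ beyond nonnegativity is used here.

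For parts (ii) and (iii) I would argue directly from the optimality inequalities, exploiting that $\lambda_2$ and $\lambda_1$ are the endpoints of $C$. Fix $u \in C$ and, for a center $t$, set
$$
\Delta_\lambda(t) := E(t,u) - E(t,\lambda) = f_\alpha(u) - f_\alpha(\lambda) + \frac{1}{2\beta}\big[(u-t)^2 - (\lambda-t)^2\big].
$$
The bracket expands to $(u-\lambda)(u+\lambda-2t)$, from which a one-line computation gives the key monotonicity identity
$$
\Delta_\lambda(y) - \Delta_\lambda(x) = \frac{1}{\beta}(u-\lambda)(x-y).
$$
For (ii) take $\lambda = \lambda_2$: the hypothesis $\lambda_2 \in \prox_{\beta \widetilde{f}_\alpha}(x)$ says $\Delta_{\lambda_2}(x) \ge 0$ for every $u \in C$. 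Since $\lambda_2 = \max C$ we have $u - \lambda_2 \le 0$, and for $y > x$ we have $x - y < 0$; the identity then yields $\Delta_{\lambda_2}(y) - \Delta_{\lambda_2}(x) \ge 0$, so $\Delta_{\lambda_2}(y) \ge 0$ for all $u \in C$, i.e. $\lambda_2 \in \prox_{\beta \widetilde{f}_\alpha}(y)$. Part (iii) is the mirror image with $\lambda = \lambda_1 = \min C$: now $u - \lambda_1 \ge 0$ and $y < x$ gives $x - y > 0$, again forcing $\Delta_{\lambda_1}(y) - \Delta_{\lambda_1}(x) \ge 0$.

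The only point requiring care — and the step I expect to be the main obstacle — is bookkeeping the two sign factors in $\Delta_\lambda(y) - \Delta_\lambda(x) = \frac{1}{\beta}(u-\lambda)(x-y)$ and pairing them with the correct endpoint: the right endpoint $\lambda_2$ must be matched with moving the center rightward ($y > x$), and the left endpoint $\lambda_1$ with moving it leftward ($y < x$), so that the product of the two nonpositive (resp. nonnegative) factors is nonnegative. Everything else is elementary algebra; in particular neither the convexity nor the piecewise-quadratic form of $f_\alpha$ enters, so the result is really a general order-monotonicity property of the proximity operator constrained to an interval, valid for any $f_\alpha$ arising from \eqref{def:falpha}.
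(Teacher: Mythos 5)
Your proposal is correct and follows essentially the same route as the paper: part (i) by observing that a global minimizer of $f_\alpha(\cdot)+\tfrac{1}{2\beta}(\cdot-x)^2$ lying in $C$ also minimizes over $C$ (which equals $\prox_{\beta\widetilde{f}_\alpha}(x)$ by Lemma~\ref{lemma:f+iota}), and parts (ii)--(iii) by the quadratic expansion $(u-y)^2-(u-x)^2$, which the paper writes as a chained inequality using $(y-x)(y+x-2\lambda_2)\le(y-x)(y+x-2u)$ and you package as the equivalent identity $\Delta_\lambda(y)-\Delta_\lambda(x)=\tfrac{1}{\beta}(u-\lambda)(x-y)$. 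The sign bookkeeping you flag is handled correctly, so no gap.
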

\begin{proof}\ \
(i): Assume $p$ is an element in $\prox_{\beta {f}_\alpha}(x) \cap C$. We have
\begin{eqnarray*}
f_\alpha(p)+\frac{1}{2\beta}(p-x)^2 &=& \min_{u \in \mathbb{R}}\left\{f_\alpha(u)+\frac{1}{2\beta}(u-x)^2\right\} \\
&=& \min_{u \in C}\left\{f_\alpha(u)+\frac{1}{2\beta}(u-x)^2\right\} \\
&=& \min_{u \in \mathbb{R}}\left\{\widetilde{f}_\alpha(u)+\frac{1}{2\beta}(u-x)^2\right\},
\end{eqnarray*}
where the first equation is due to $p \in \prox_{\beta {f}_\alpha}(x)$, the second equation is due to $p\in C$, the last one is due to Lemma~\ref{lemma:f+iota}, hence, $p\in \prox_{\beta \widetilde{f}_\alpha}(x)$.

(ii): Since $\lambda_2 \ge 0$, the inclusion $\lambda_2 \in  \prox_{\beta \widetilde{f}_\alpha}(x)$ together with Lemma~\ref{lem:sparsity_technical2} implies that $x\ge 0$  and for all $u \in [\lambda_1, \lambda_2]$,
$$
\widetilde{f}_\alpha(u)+\frac{1}{2\beta}(u-x)^2 \ge \widetilde{f}_\alpha(\lambda_2)+\frac{1}{2\beta}(\lambda_2-x)^2.
$$
With the above inequality, when $y>x$, we have that
\begin{eqnarray*}
\widetilde{f}_\alpha(\lambda_2)+\frac{1}{2\beta}(\lambda_2-y)^2&=&\widetilde{f}_\alpha(\lambda_2)+\frac{1}{2\beta}(\lambda_2-x)^2+\frac{1}{2\beta}(y-x)(y+x-2\lambda_2) \\
&\le&\widetilde{f}_\alpha(u)+\frac{1}{2\beta}(u-x)^2+\frac{1}{2\beta}(y-x)(y+x-2u) \\
&=&\widetilde{f}_\alpha(u)+\frac{1}{2\beta}(u-y)^2
\end{eqnarray*}
hold for all $u \in [\lambda_1, \lambda_2]$.  This yields $\lambda_2 \in  \prox_{\beta \widetilde{f}_\alpha}(y)$.

(iii): The proof is similar to (ii).
\end{proof}

%
%

Theorem~\ref{thm:f+iota} tells us that the for $\widetilde{f}$ as in \eqref{def:ftidle}, $\prox_{\beta \tilde{f}_\alpha}$ will resemble the proximity operator of $f_\alpha$ around the origin and the proximity operator of $\iota_C$ elsewhere. Due to the number of parameters, there are a huge number of possible combinations. Rather than list all of the combinations here, we provide the details for a specific function in Example 4 of Section \ref{sec:Examples}.

We have shown that sparsity promoting quadratic and indicator functions have thresholding proximity operators. The results essentially rely on the fact that $\env_{\alpha f}$ is quadratic for these functions. In fact, quadratic and indicator functions are the only ones with this property \cite{Planiden-Wang:JOTA:2018}, so our discussion is a comprehensive method for obtaining thresholding rules.


\section{Examples}\label{sec:Examples}
In this section, we illustrate our theory by presenting several examples that are of practical interest.

For the first example, we collect and expand upon the previous discussion of $f(x)=\|x\|_1=\sum_{i=1}^n|x_i|$ for $x \in \mathbb{R}^n$.  The $\ell_1$-norm has been extensively used in myriad applications for promoting sparsity.

The second example is the ReLU (Rectified Linear Unit) function. It is the most commonly used activation function in convolutional neural networks or deep learning. The ReLU function on $\mathbb{R}^n$ is defined as follows:
$f(x)=\sum_{i=1}^n \max\{0, x_i\},$ where $x \in \mathbb{R}^n$.

The third example is the elastic net penalty function which is widely used in statistics (see \cite{Zou:Hastie:JRSS:2006}). The general form of the elastic net is the linear combination of the $\ell_1$ and $\ell_2$ norms as follows: $f(x)=\frac{\lambda_1}{2}\|x\|^2+\lambda_2\|x\|_1,$  where $\lambda_1$ and $\lambda_2$ are two nonnegative parameters. In our discussion, we will simply choose $\lambda_1=\lambda_2=1$. This is known as the naive elastic net.

The last example is similar to the first one, but restricted to a cube centered at the origin. The function $f$ is given as follows:
$f(x)=\|x\|_1 + \iota_C(x),$  where $C=[-\lambda, \lambda]^n$. Generally speaking, this function promotes the sparsity on $C$.

We notice that the function $f$ in the above four examples can be written as
$$
f(x)=\sum_{i=1}^n g(x_i)
$$
for $x\in \mathbb{R}^n$ and some specific function $g$. For example, $g$ is $|\cdot|$, $\max\{0, \cdot\}$, $\frac{1}{2}|\cdot|^2 + |\cdot|$, or $|\cdot|+\iota_{[-\lambda, \lambda]}$, in examples 1, 2, 3, or 4, an analogue of $f$ when $\mathbb{R}^n$ reduces to $\mathbb{R}$. We further have that $\prox_{\alpha f}(x)=\prox_{\alpha g}(x_1) \times \prox_{\alpha g}(x_2) \times \cdots \times \prox_{\alpha g}(x_n)$, $\env_{\alpha f}(x)=\sum_{i=1}^n \env_{\alpha g}(x_i)$, $\prox_{\beta f_\alpha}(x)=\prox_{\beta g_\alpha}(x_1) \times \prox_{\beta g_\alpha}(x_2) \times \cdots \times \prox_{\beta g_\alpha}(x_n)$, and $\env_{\beta f_\alpha}(x)=\sum_{i=1}^n \env_{\beta g_\alpha}(x_i)$. Therefore, in the following discussion we will restrict ourself on $n=1$.

\subsection{Example 1: The absolute value function}

The first example is the absolute value function $f: \mathbb{R} \rightarrow \mathbb{R}: x \longmapsto |x|$, which is a special case of the piecewise quadratic function in \eqref{def:quad} with $a_1=a_2=0$, $b_1=-1$, and $b_2=1$. This function is nondifferentiable at the origin with $\mathrm{argmin}_{x \in \mathbb{R}} f(x) =\{0\}$ and $\partial f (0) =\partial |\cdot|(0)=[-1,1]$.

\begin{figure}[h]\centering
\begin{tabular}{ccc}
\includegraphics[scale=0.35]{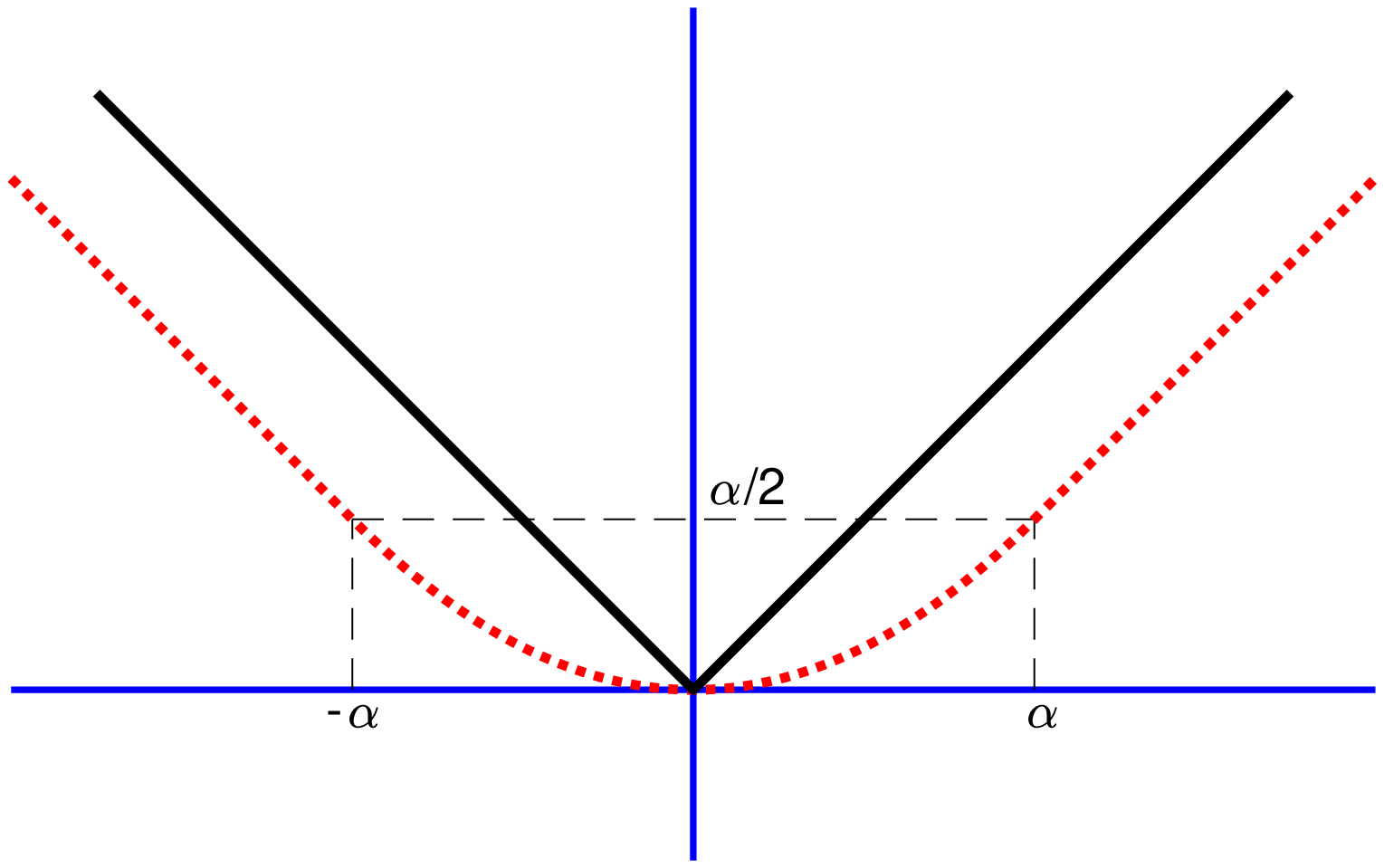} & &\includegraphics[scale=0.35]{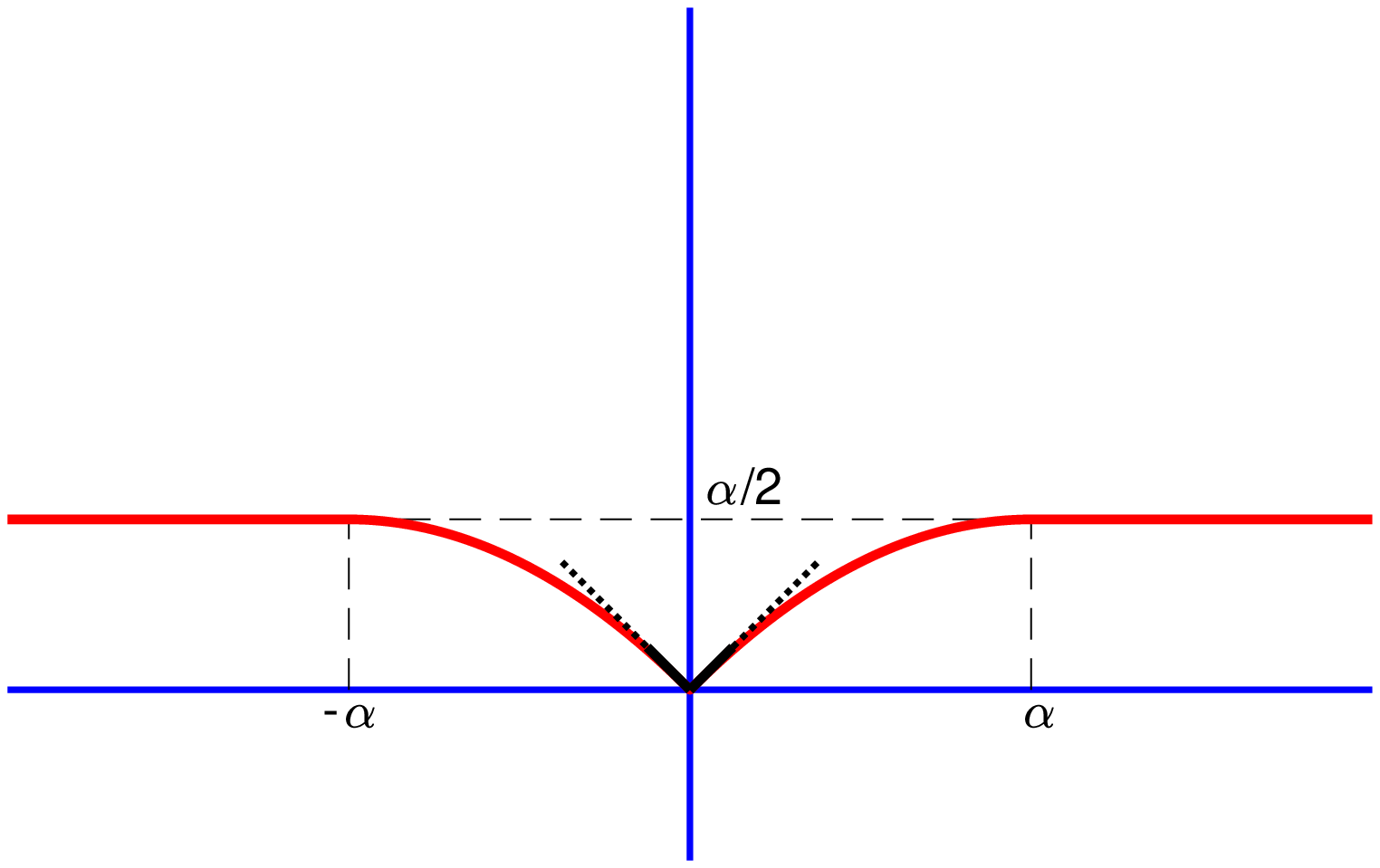}\\
(a) & & (b)
\end{tabular}
\caption{Example 1. (a) The graphs of $f$ (solid), $\env_{\alpha f}$ (dotted),  and (b) the graph of $f_\alpha = f(x) - \env_{\alpha f(x)}$. Near the origin $f_\alpha$ retains the structure of $f$, which is emphasized in black (solid-dotted).} \label{figure:ex1-f-envf}

\end{figure}

The proximity operator and the Moreau envelope of $f$ with parameter $\alpha>0$ are
$$
\mathrm{prox}_{\alpha |\cdot|} (x)=\mathrm{sgn}(x) \max\{0, |x|-\alpha\} \quad \mbox{and} \quad
\mathrm{env}_{\alpha |\cdot|} (x) = \left\{
                                \begin{array}{ll}
                                  \frac{1}{2\alpha}x^2, & \hbox{if $|x|\le \alpha$;} \\
                                  |x|-\frac{1}{2}\alpha, & \hbox{otherwise,}
                                \end{array}
                              \right.
$$
respectively. It is well know that $\mathrm{prox}_{\alpha |\cdot|}$ is called the soft thresholding in literature of wavelet \cite{Donoho:ieeeIT:06} and $\mathrm{env}_{\alpha |\cdot|}$ is Huber's function in robust statistics \cite{Huber:09}. Figure~\ref{figure:ex1-proxf} shows the typical shape of the proximity operator of $f$.

\begin{figure}[h]
\centering
\includegraphics[scale=0.35]{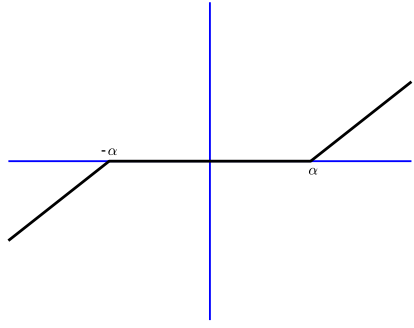}
\caption{Example 1. The typical shape of $\prox_{\alpha f}$.}\label{figure:ex1-proxf}
\end{figure}

As defined in \eqref{def:falpha}, for the absolute value function $f$,
$$
f_\alpha(x):= |x|-\mathrm{evn}_{\alpha |\cdot|} (x)=\left\{
                                \begin{array}{ll}
                                  |x|-\frac{1}{2\alpha}x^2, & \hbox{if $|x|\le \alpha$;} \\
                                  \frac{1}{2}\alpha, & \hbox{otherwise.}
                                \end{array}
                              \right.
$$
This function $f_\alpha$ (see Figure~\ref{figure:ex1-f-envf}(b)) is identical to the minimax convex penalty (MCP) function given in \cite{Zhang:AS:2010}, but motivated from statistic perspective.



The expression of $\mathrm{prox}_{\beta f_{\alpha}}$ depends on the relative values of $\alpha$ and $\beta$. If $\beta<\alpha$, Lemma~\ref{lemma:b2>0:case1} gives
\begin{equation}\label{eq:beta<alpha}
\mathrm{prox}_{\beta f_{\alpha}} (x) =    \left\{
                           \begin{array}{ll}
                           0, & \hbox{if $|x|\le \beta$;}\\
                           \frac{\alpha}{\alpha-\beta}(|x|-\beta)\mathrm{sgn}(x), & \hbox{if $\beta<|x|\le\alpha$;} \\
                           x, & \hbox{if $|x| \ge \alpha$.}
                         \end{array}
                        \right.
\end{equation}
This is the firm thresholding operator \cite{Gao-Bruce:SS:97}. If $\beta=\alpha$, Lemma~\ref{lemma:b2>0:case2} gives
\begin{equation}\label{eq:beta=alpha}
\mathrm{prox}_{\beta f_{\alpha}} (x) =\left\{
                           \begin{array}{ll}
                           0, & \hbox{if $|x|< \alpha$;}\\
                           {[0, \alpha]}, & \hbox{if $|x| = \alpha$;}\\
                            x, & \hbox{if $|x| > \alpha$,}
                         \end{array}
\right.
\end{equation}

Finally, if $\beta>\alpha$, Lemma~\ref{lemma:b2>0:case3} gives
\begin{equation}\label{eq:beta>alpha}
\mathrm{prox}_{\beta f_{\alpha}} (x) =\left\{
                           \begin{array}{ll}
                           0, & \hbox{if $|x|< \sqrt{\alpha \beta}$;}\\
                           \{0,x\}, & \hbox{if $|x| = \sqrt{\alpha \beta}$;}\\
                            x, & \hbox{if $|x| > \sqrt{\alpha \beta}$;}
                         \end{array}
\right.
\end{equation}
The proximity operator $\mathrm{prox}_{\beta f_{\alpha}}$ for different values of $\alpha$ and $\beta$ is plotted in Figure~\ref{fig:prox-|x|}.

\begin{figure}[h]
\centering
\begin{tabular}{ccc}
\includegraphics[scale=0.4]{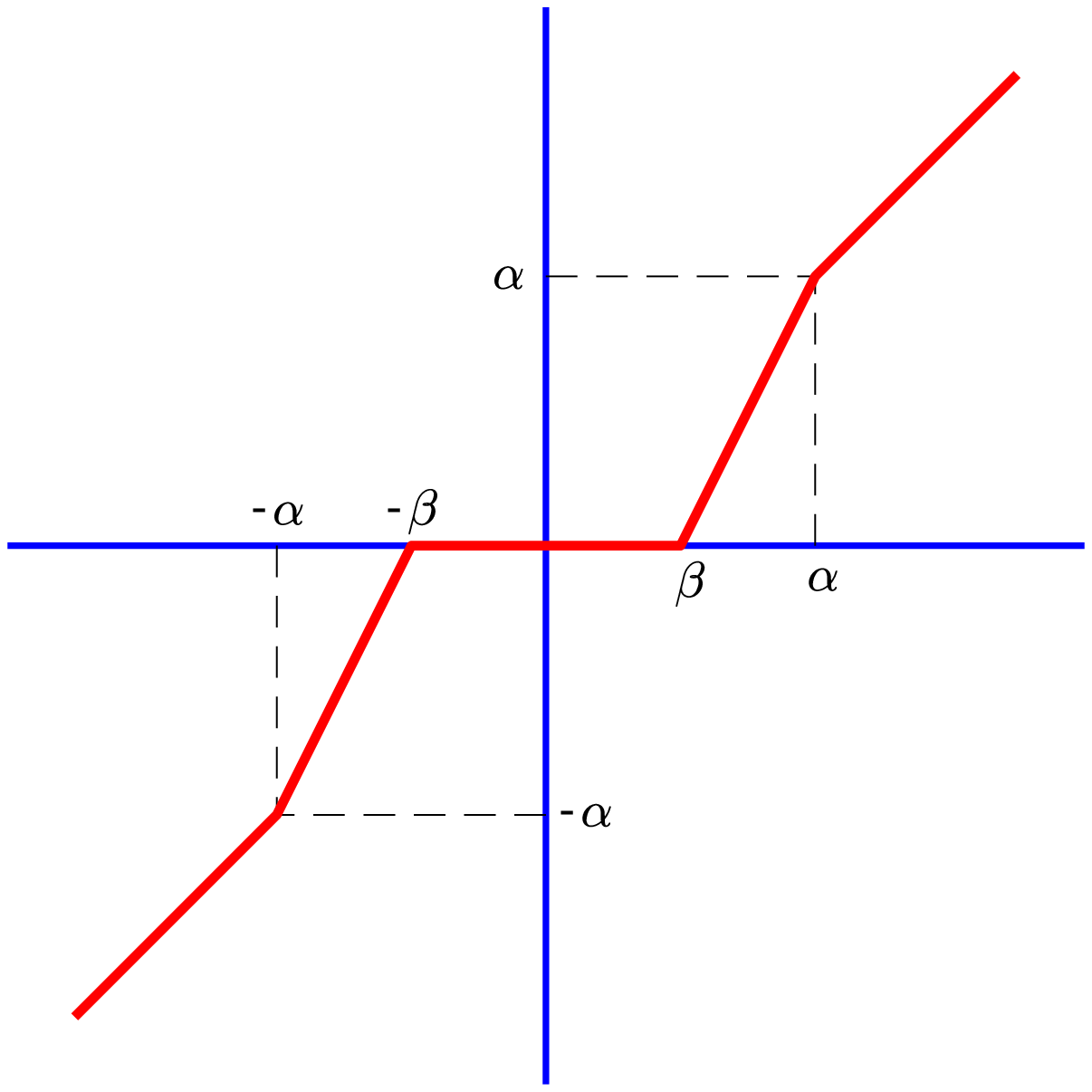} & \includegraphics[scale=0.4]{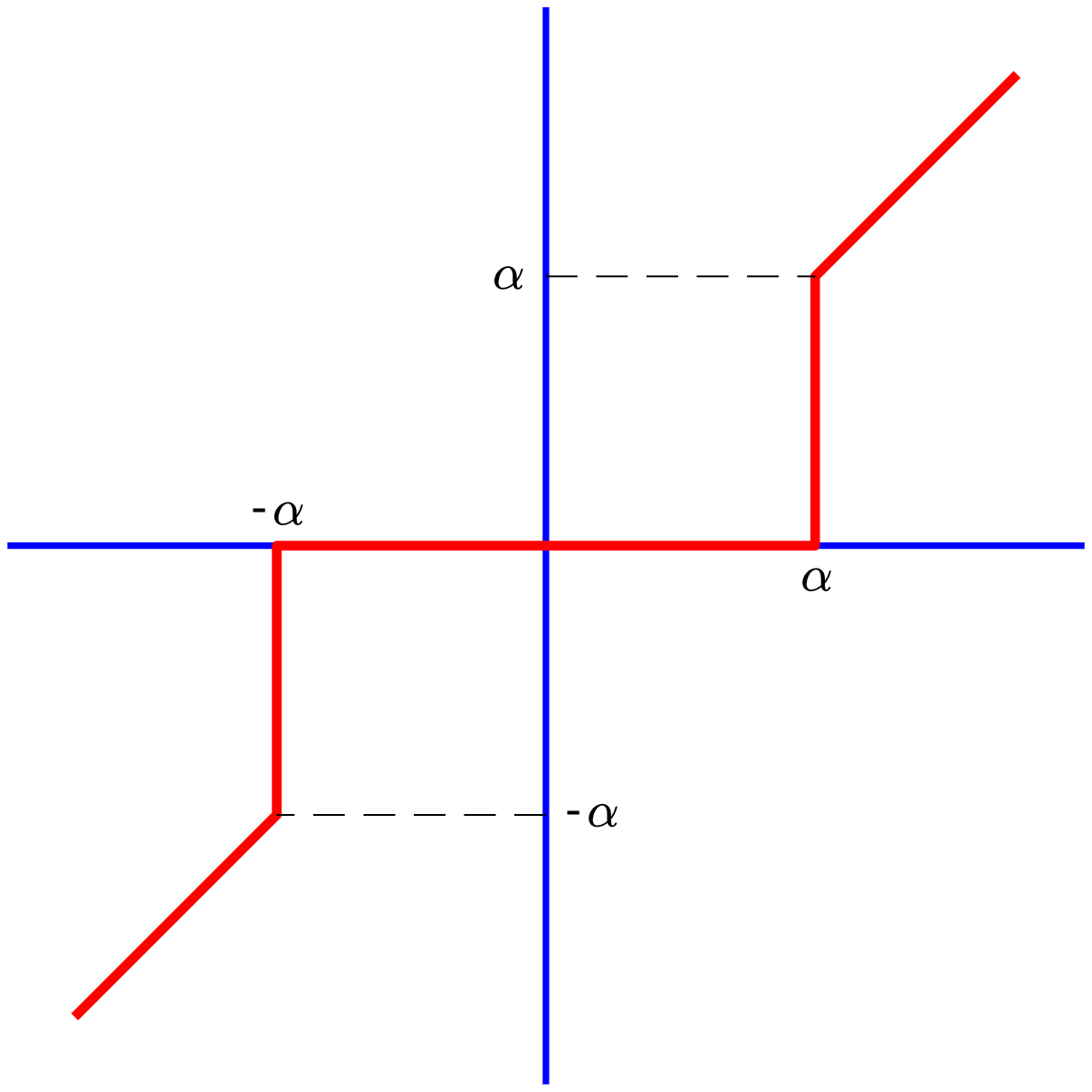} & \includegraphics[scale=0.4]{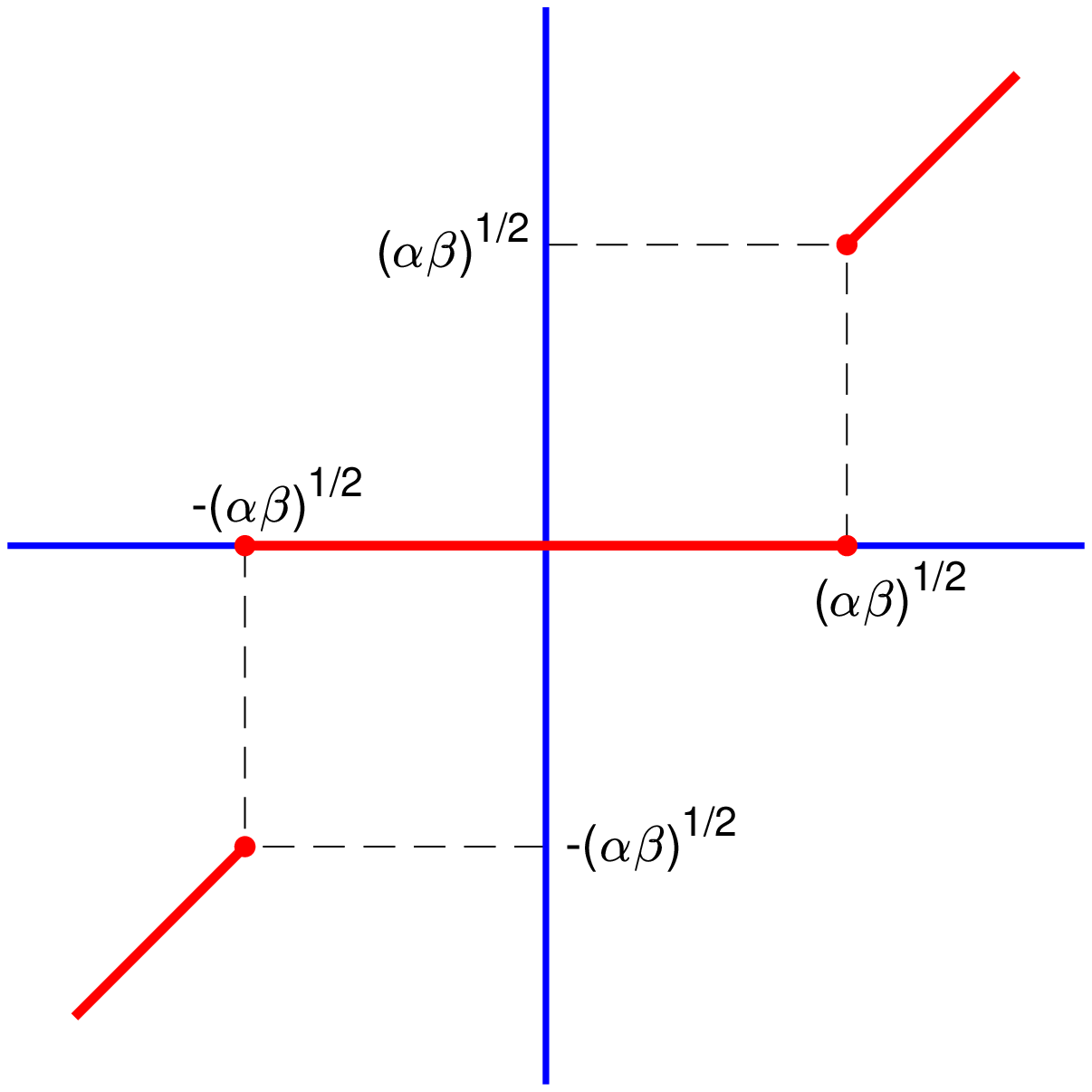}\\
(a) & (b) & (c)
\end{tabular}
\caption{Typical shapes of the proximity operator of $| \cdot |_\alpha$ for (a) $\beta < \alpha$, (b) $\beta = \alpha$, (c) $\beta > \alpha$. The sparsity threshold and the thresholding behavior depend on the relationship between $\alpha$ and $\beta$.}
\label{fig:prox-|x|}
\end{figure}

To end this example, we give several remarks on the proximity operators of $\mathrm{prox}_{\alpha f}$ and $\mathrm{prox}_{\beta f_{\alpha}}$ as follows:
\begin{itemize}
\item Note that $\partial f(0)=[-1,1]$. The results given in \eqref{eq:beta<alpha} (for $\beta<\alpha$) and \eqref{eq:beta=alpha} (for $\beta=\alpha$)  exactly match the first two statements of Theorem~\ref{thm:major}. For $\beta>\alpha$, the $\mathrm{prox}_{\beta f_\alpha}(x)=0$ for all $x\in [-\sqrt{\alpha\beta},\sqrt{\alpha\beta}]$ which includes the interval $[-\alpha, \alpha]=\alpha \partial f(0)$ as indicated in the third statement of Theorem~\ref{thm:major}.

\item The operator $\prox_{\alpha f}$ forces its variable to zero when the absolute value is less than a given threshold, and otherwise reduces the variable, in absolute value, by the amount of the threshold. Like $\prox_{\alpha f}$, $\prox_{\beta f_\alpha}$ forces its variable to zero when the absolute value is less than a given threshold, but it fixes variables whose absolute value exceeds a certain threshold.


\item For $\beta \ge \alpha$ the proximity operator $\mathrm{prox}_{\beta f_{\alpha}}$ is almost identical to the hard threshold operator. Let $|\cdot|_0$ be the $\ell_0$ ``norm'' on $\mathbb{R}$, that is, $|x|_0$ equals $1$ if $x$ is nonzero, $0$ otherwise. The proximity operator of $|\cdot|_0$ with parameter $\gamma$ at $x$ is
    $$
    \prox_{\gamma |\cdot|_0}(x)=\left\{
                           \begin{array}{ll}
                           \{0\}, & \hbox{if $|x|< \sqrt{2\gamma}$;}\\
                           \{0,x\}, & \hbox{if $|x| = \sqrt{2\gamma}$;}\\
                            \{x\}, & \hbox{if $|x| > \sqrt{2\gamma}$,}
                         \end{array}
\right.
    $$
which is called the hard thresholding operator with threshold $\sqrt{2\gamma}$.  We can see that $\prox_{\gamma |\cdot|_0}=\mathrm{prox}_{\beta f_{\alpha}}$ as long as $2\gamma=\alpha \beta$ and $\beta>\alpha$. It is interesting that although $|\cdot|_0$ is discontinuous and $f_\alpha$ is continuous, they have the same proximity operator. Moreover, by fixing $\alpha$ and varying the parameter $\beta$,  the proximity operator $\mathrm{prox}_{\beta f_{\alpha}}$ changes from the firm thresholding operator to the hard thresholding operator.

\end{itemize}

\subsection{Example 2: ReLU function}

The ReLU (Rectified Linear Unit) function  on $\mathbb{R}$ is
$$
f(x):=\max\{0, x\},
$$
which is a special case of the piecewise quadratic function in \eqref{def:quad} with $a_1=b_1=a_2=0$ and $b_2=1$. The proximity operator and the Moreau envelope of $f$ with parameter $\alpha>0$ are
$$
\mathrm{prox}_{\alpha f} (x)=\min\{x, \max\{0, x-\alpha\}\} \quad \mbox{and} \quad
\mathrm{env}_{\alpha f} (x) = \left\{
                                \begin{array}{ll}
                                0   & \hbox{if $x \leq 0$;}\\
                                  \frac{1}{2\alpha}x^2, & \hbox{if $0 \leq x \leq \alpha$;} \\
                                  x-\frac{1}{2}\alpha, & \hbox{if $x \ge \alpha$,}
                                \end{array}
                              \right.
$$
respectively. By~\eqref{def:falpha}, $f_\alpha(x) = f(x) - \env_{\alpha f}(x)$ is
\begin{align*}
f_\alpha (x) = \begin{cases}
0, & \text{if } x < 0\\
x - \frac{1}{2\alpha}x^2, & \text{if } 0 \leq x \leq \alpha \\
\frac{\alpha}{2}, & \text{if } x>\alpha
\end{cases}
\end{align*}

Figure~\ref{figure:ex2-functions}(a) depicts the graphs of $f$ and $\env_{\alpha f}$ while Figure~\ref{figure:ex2-functions}(b) presents the function $f_\alpha$. The graph of  $\prox_{\alpha f}$ is given in Figure~\ref{figure:ex2-f-prox}.

\begin{figure}[h]
\centering
\begin{tabular}{cccc}
\includegraphics[scale=0.35]{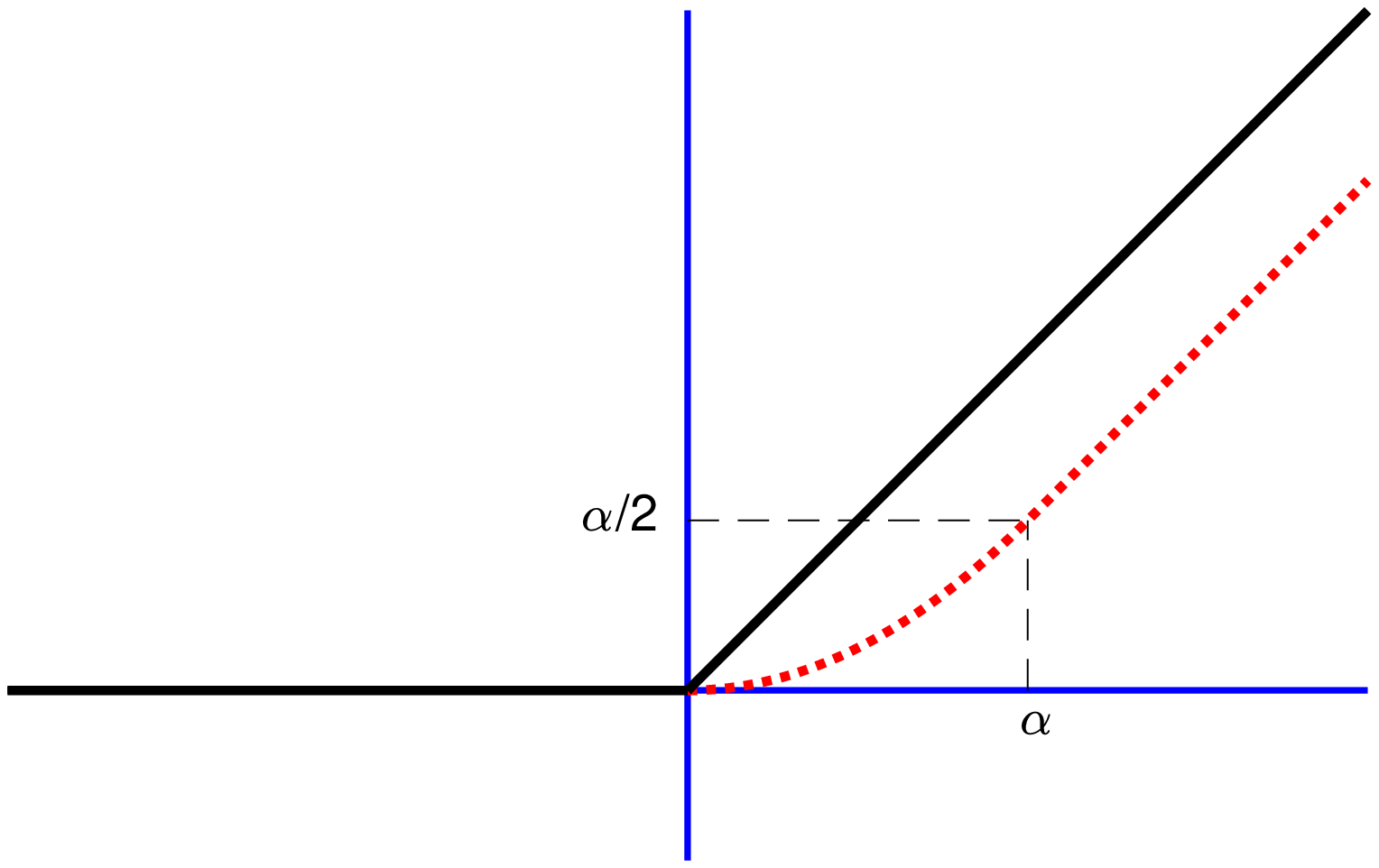} & & & \includegraphics[scale=0.35]{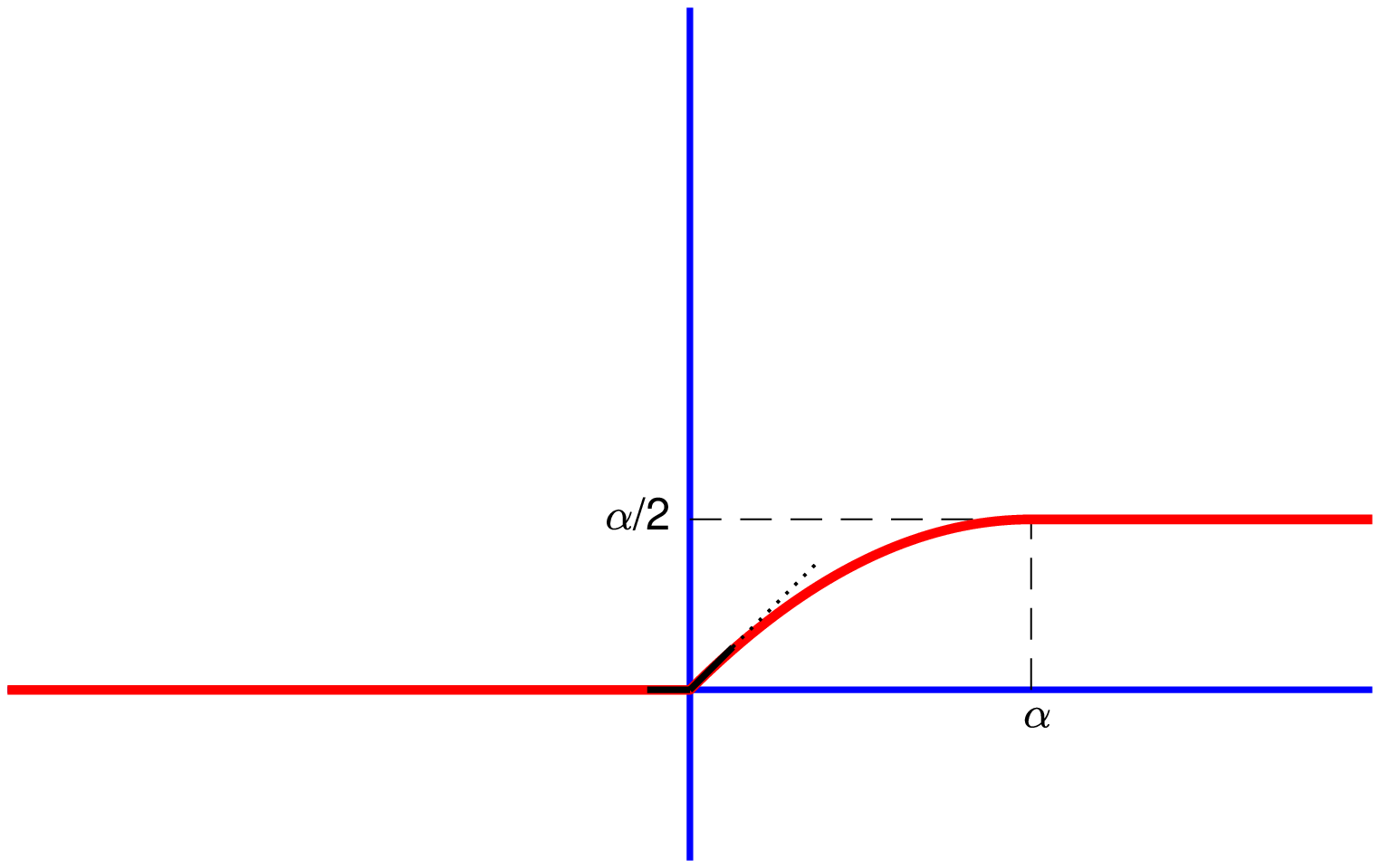} \\
(a) & & & (b)
\end{tabular}
\caption{Example 2. (a) The graphs of $f$ (solid), $\env_{\alpha f}$ (dotted), and (b) their difference $f_\alpha = f - \env_{\alpha f}$. The singularity of $f_\alpha$ at zero is emphasized in black (solid-dotted).}
\label{figure:ex2-functions}
\end{figure}

\begin{figure}[h]
\centering
\includegraphics[scale=0.35]{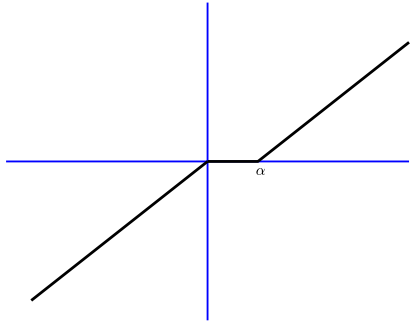}
\caption{Example 2. The typical shape of $\prox_{\alpha f}$. The parameter $\alpha$ is the sparsity threshold.}
\label{figure:ex2-f-prox}
\end{figure}

As in example 1, the expression of $\mathrm{prox}_{\beta f_{\alpha}}$ depends on  the relative values of $\alpha$ and $\beta$. If $\beta<\alpha$,
\begin{equation}\label{ex2:beta<alpha}
\prox_{\beta f_\alpha} (x) =
\begin{cases}
x, & \text{if }  x \leq 0 \text{ or } x \ge \alpha; \\
0, & \text{if } 0 \leq x \leq \beta ; \\
\frac{\alpha(x-\beta)}{\alpha - \beta};  & \text{if }  \beta \leq x \leq \alpha.
\end{cases}
\end{equation}
If $\beta = \alpha$,
\begin{equation}\label{ex2:beta=alpha}
\prox_{\beta f_\alpha} (x) = \begin{cases}
x, & \text{if }  x \leq 0 \text{ or } x>\alpha; \\
0, & \text{if } 0 \leq x < \alpha; \\
[0, \alpha] & \text{if } x = \alpha.
\end{cases}
\end{equation}
Finally, if $\beta > \alpha$,
\begin{equation}\label{ex2:beta>alpha}
\prox_{\beta f_\alpha} (x) = \begin{cases}
x,  & \text{if }  x \leq 0 \text{ or } x>\sqrt{\alpha \beta}; \\
0, & \text{if }  0 \leq x < \sqrt{\alpha \beta}; \\
\{0, \sqrt{\alpha \beta}\}, & \text{if } x = \sqrt{\alpha \beta}.
\end{cases}
\end{equation}

\begin{figure}[h]
\begin{centering}
\begin{tabular}{ccc}
\includegraphics[scale=0.35]{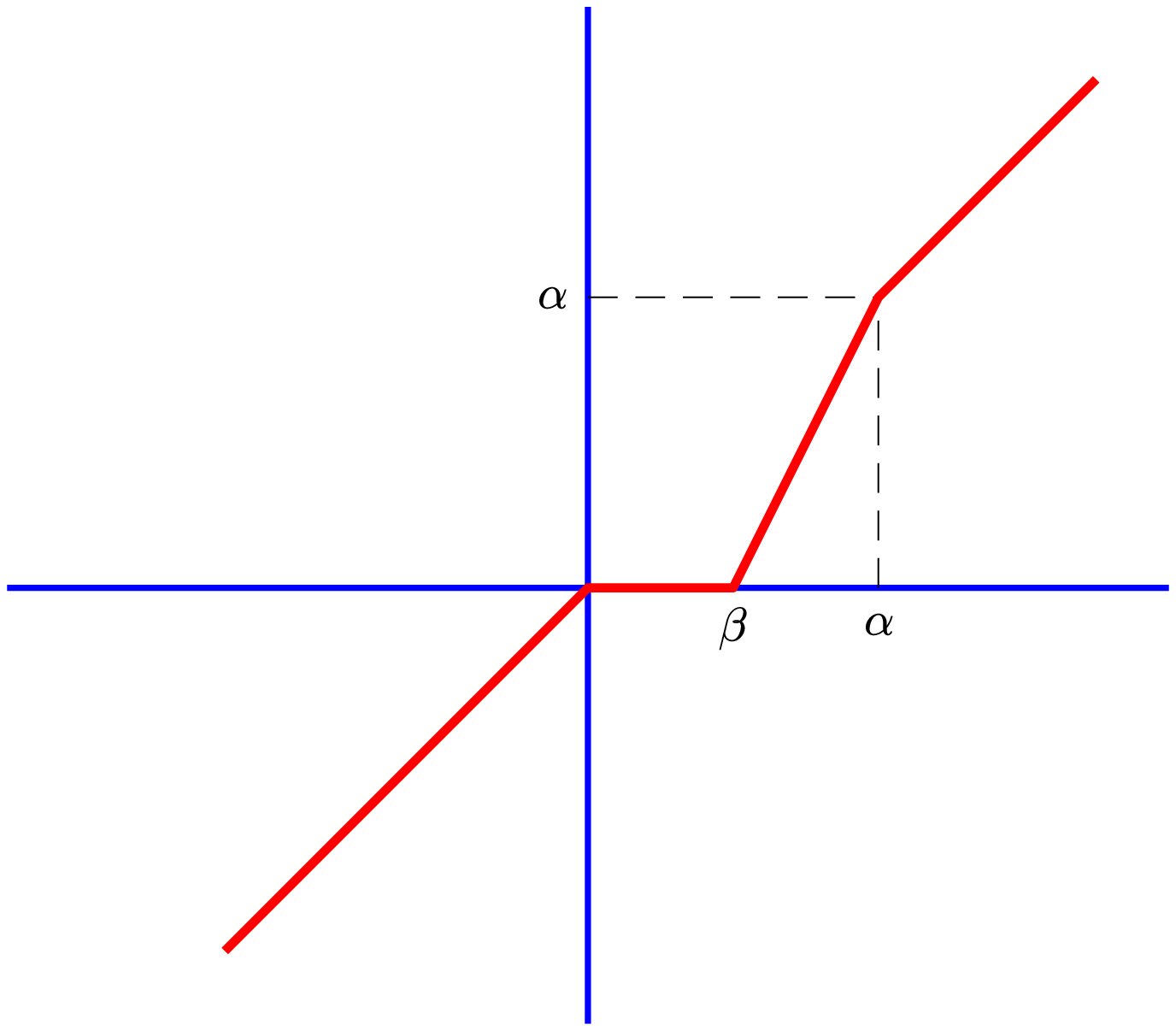} & \includegraphics[scale=0.35]{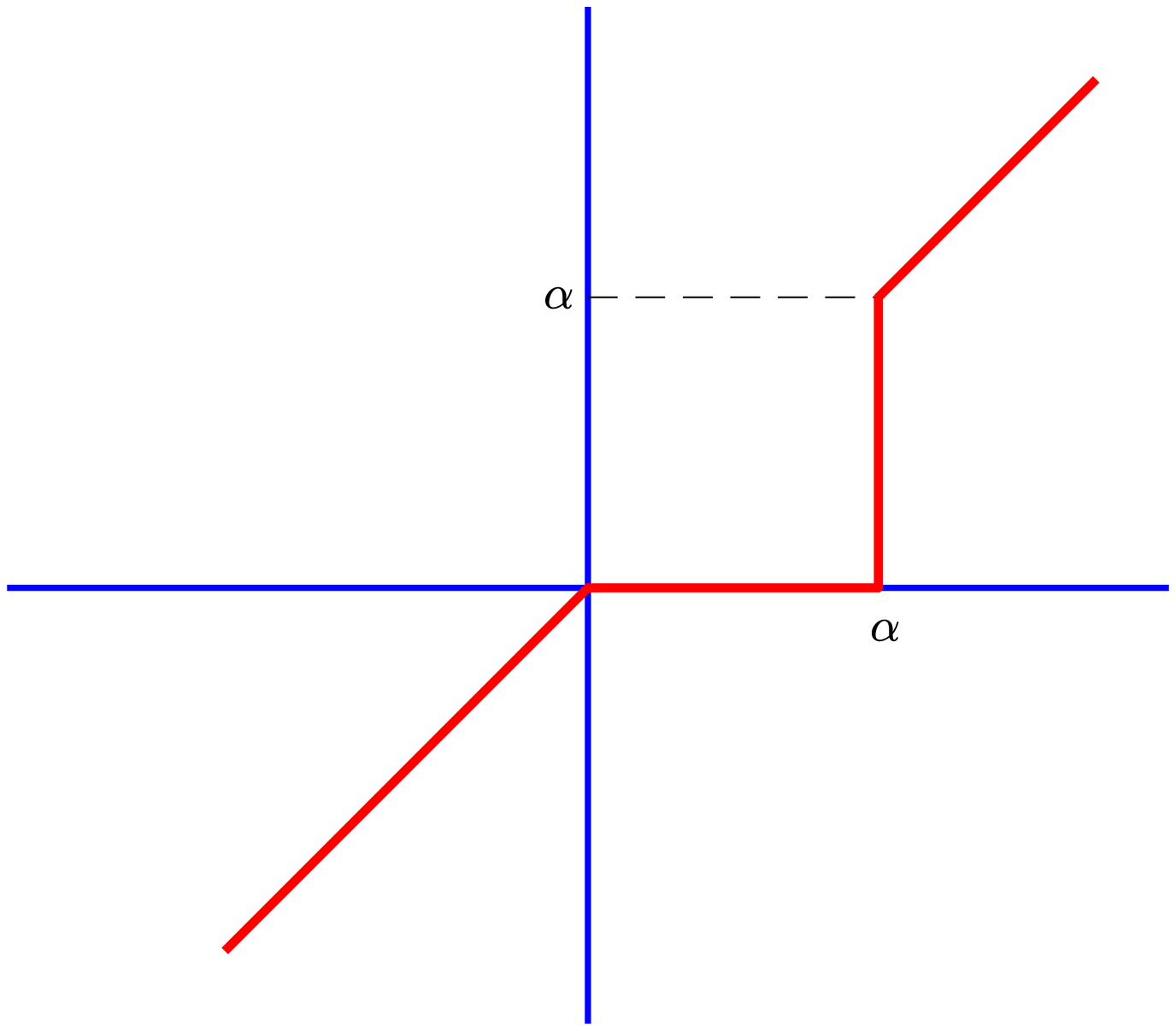} & \includegraphics[scale=0.35]{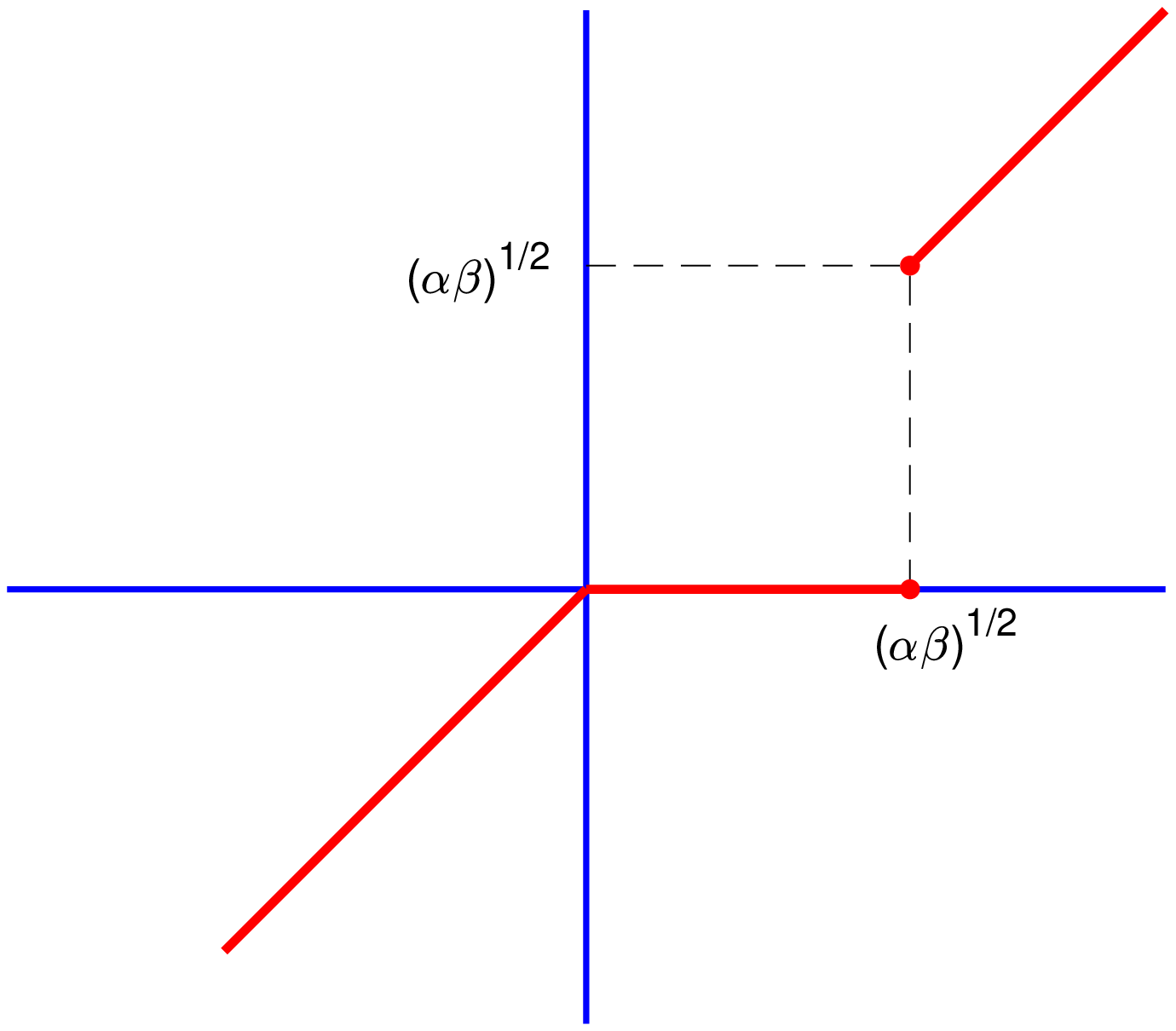} \\
(a) & (b) & (c)
\end{tabular}
\caption{Example 2. Typical shapes of the proximity operator of $f_\alpha$ for (a) $\beta < \alpha$; (b) $\beta = \alpha$; and (c) $\beta > \alpha$.}
\end{centering}
\end{figure}

Note that $\partial f(0)=[0,1]$. The results given in \eqref{ex2:beta<alpha} (for $\beta<\alpha$) and \eqref{ex2:beta=alpha} (for $\beta=\alpha$)  exactly match the first two statements of Theorem~\ref{thm:major}. For $\beta>\alpha$, equation~\eqref{ex2:beta>alpha} shows that $\mathrm{prox}_{\beta f_\alpha}(x)=0$ for all $x\in [0,\sqrt{\alpha\beta}]$, which includes the interval $[0, \alpha]=\alpha \partial f(0)$ as indicated in the third statement of Theorem~\ref{thm:major}.

\subsection{Example 3: Elastic Net}

The elastic net is a regularized regression method in data analysis that linearly combines the $\ell_1$ and $\ell_2$ penalties of the LASSO and ridge methods. In this example, we consider a special case of the elastic net in $\mathbb{R}$:
$$
f(x)=\frac{1}{2}x^2 + |x|.
$$
This is an instance of the piecewise quadratic function given in \eqref{def:quad} with $a_1=a_2=1$, $b_1=-1$ and $b_2=1$.
Clearly,  $f$ is nondifferentiable at the origin with $\mathrm{argmin}_{x \in \mathbb{R}} f(x) =\{0\}$. Moreover, $\partial f (0) =\partial |\cdot|(0)=[-1,1]$.

The proximity operator and the Moreau envelope of $f$ with parameter $\alpha>0$ are
$$
\prox_{\alpha f}(x) = \max \left\{0, \frac{1}{\alpha + 1}(|x|- \alpha)\right\} \sign(x) \quad \mbox{and} \quad
\env_{\alpha f} (x) = \begin{cases} \frac{1}{2\alpha}x^2, & \text{if } |x| \leq \alpha; \\
\frac{1}{\alpha + 1}(
\frac{1}{2}x^2 + |x| - \frac{\alpha}{2}), & \text{if } |x| \geq \alpha, \\
\end{cases}
$$
respectively.

The graphs of $f$ and $\env_{\alpha f}$ are plotted in Figure~\ref{figure:ex3-functions} (a). The graph of $\prox_{\alpha f}$  is plotted in Figure~\ref{figure:ex3-functions} (b). As in the case of the absolute value function, $\prox_{\alpha f}$ sends all values between $\alpha$ and $-\alpha$ to zero. Unlike the absolute value, it also contracts elements outside of this interval toward the origin.

\begin{figure}[h]
\centering
\begin{tabular}{cccc}
\includegraphics[scale=0.35]{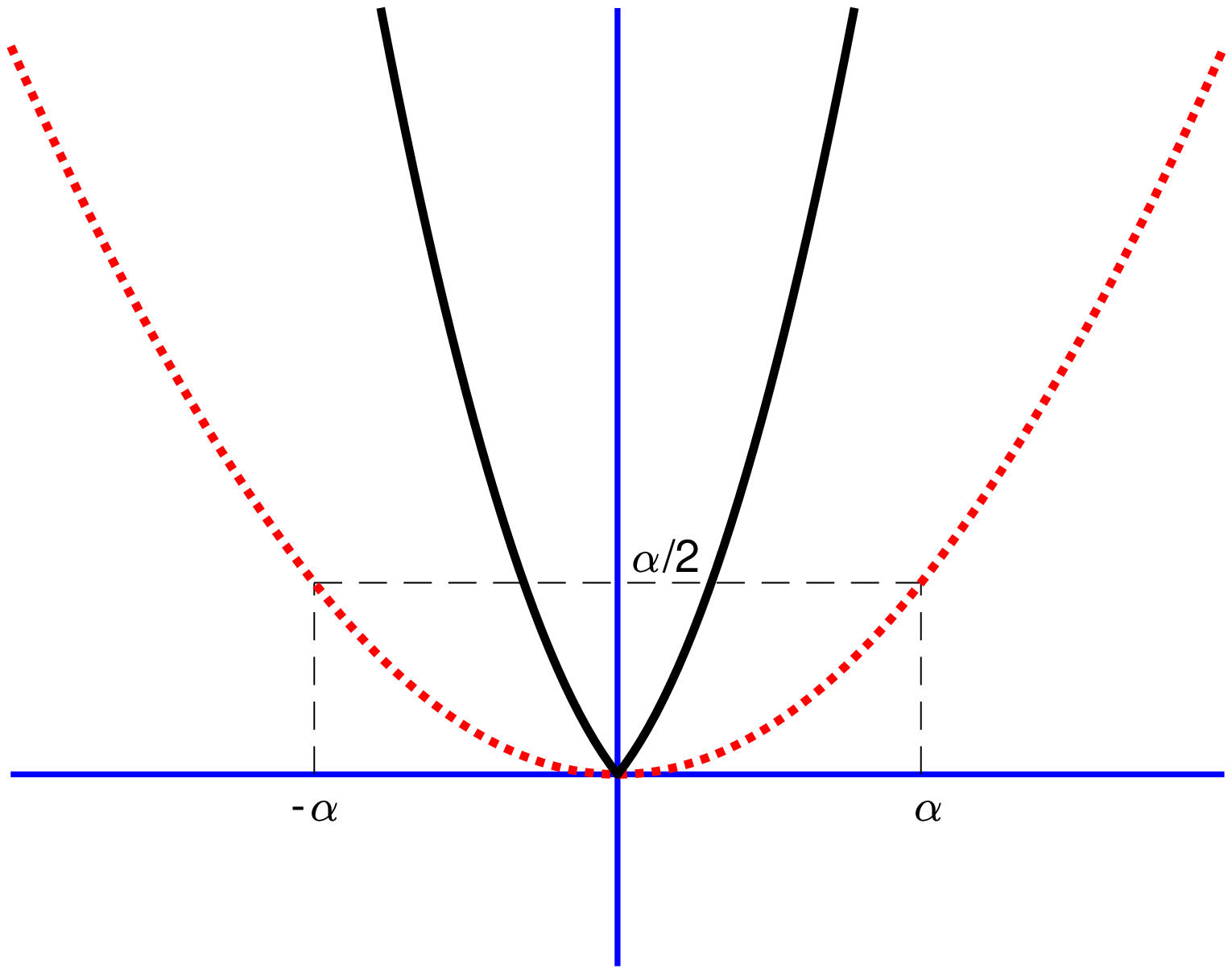}& & & \includegraphics[scale=0.35]{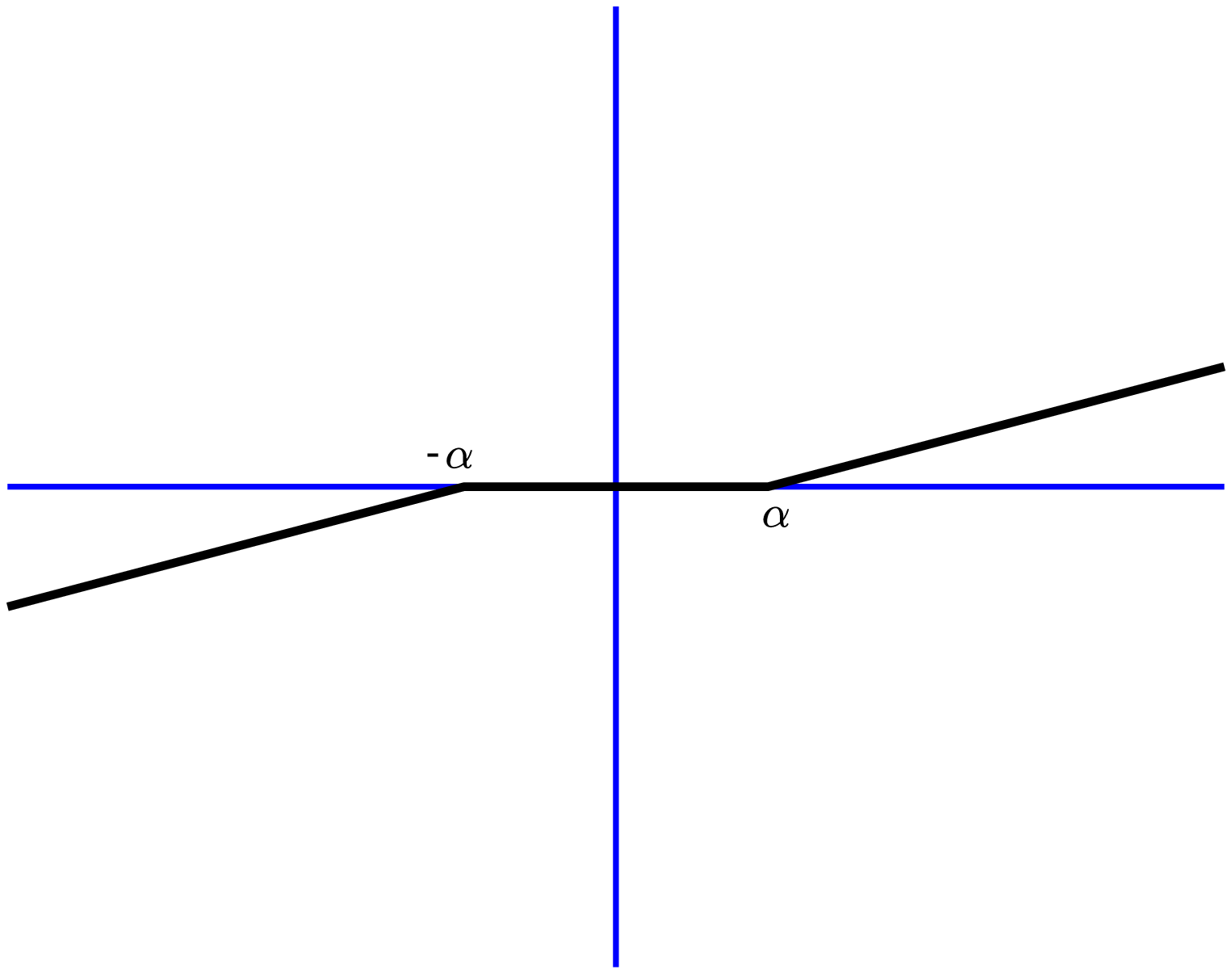} \\
(a)& & & (b)
\end{tabular}
\caption{Example 3. (a) The graphs of  $f$ (solid) and $\env_{\alpha f}$ (dotted); and (b) the graph of $\prox_{\alpha f}$.}\label{figure:ex3-functions}
\end{figure}


Now $f_\alpha$, the difference between $f$ and its Moreau envelope $\env_{\alpha f}$, is
$$
f_\alpha(x) = \begin{cases}
\frac{\alpha-1}{2\alpha}x^2 + |x|, & \text{if } |x| \leq \alpha; \\
\frac{\alpha}{2(\alpha + 1)}x^2 +\frac{\alpha}{\alpha+1}|x|+\frac{\alpha}{2(\alpha+1)}, & \text{if } |x| \geq \alpha.
\end{cases}
$$
We remark that $f_\alpha$ is convex when $\alpha \ge 1$ and nonconvex when $\alpha < 1$. The graph of $f_\alpha$ for $\alpha \ge 1$ and $\alpha<1$ are shown in Figure~\ref{figure:ex3-falpha}(a) and (b), respectively.
\begin{figure}[h]
\centering
\begin{tabular}{cccc}
\includegraphics[scale=0.35]{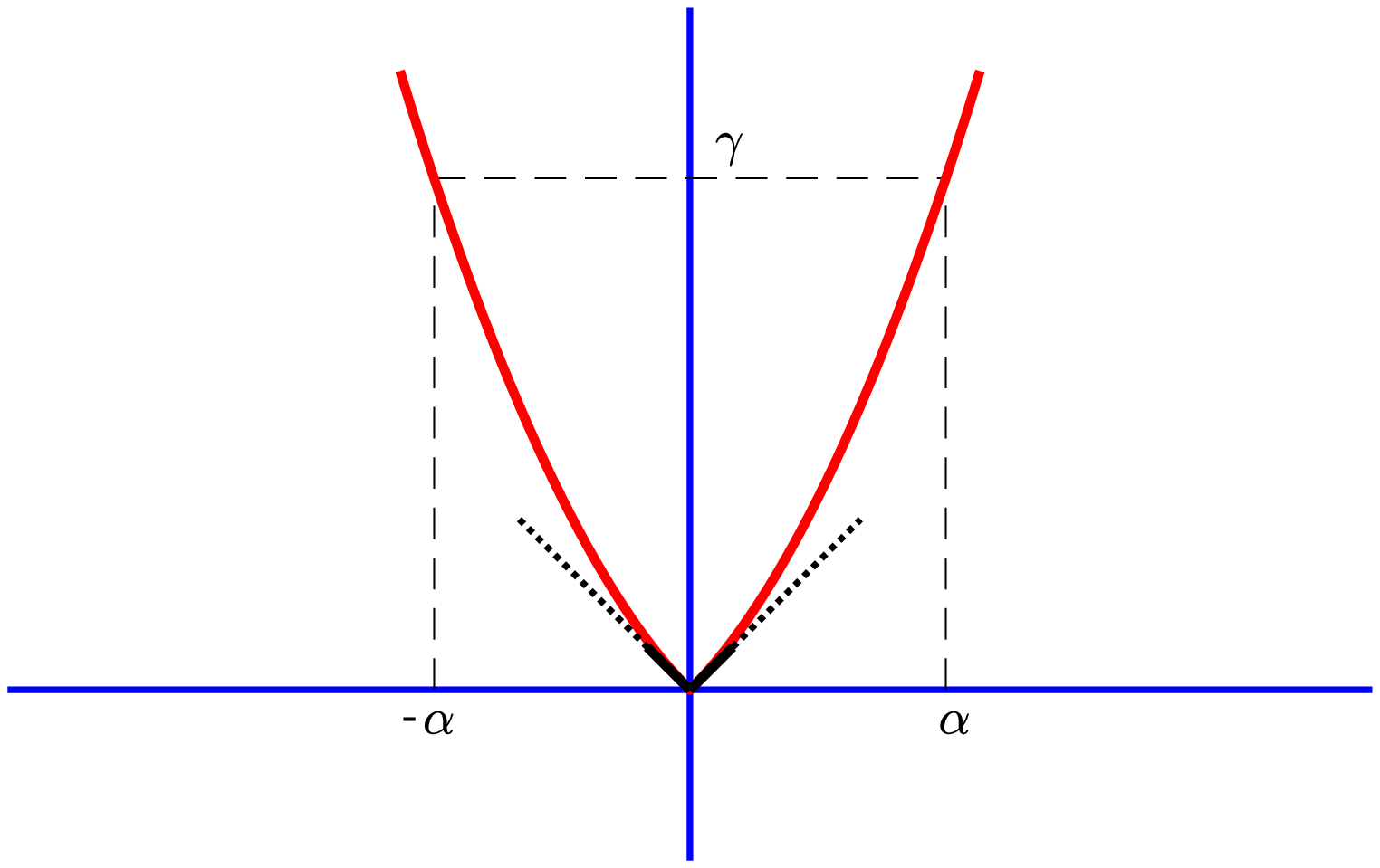} & & & \includegraphics[scale=0.35]{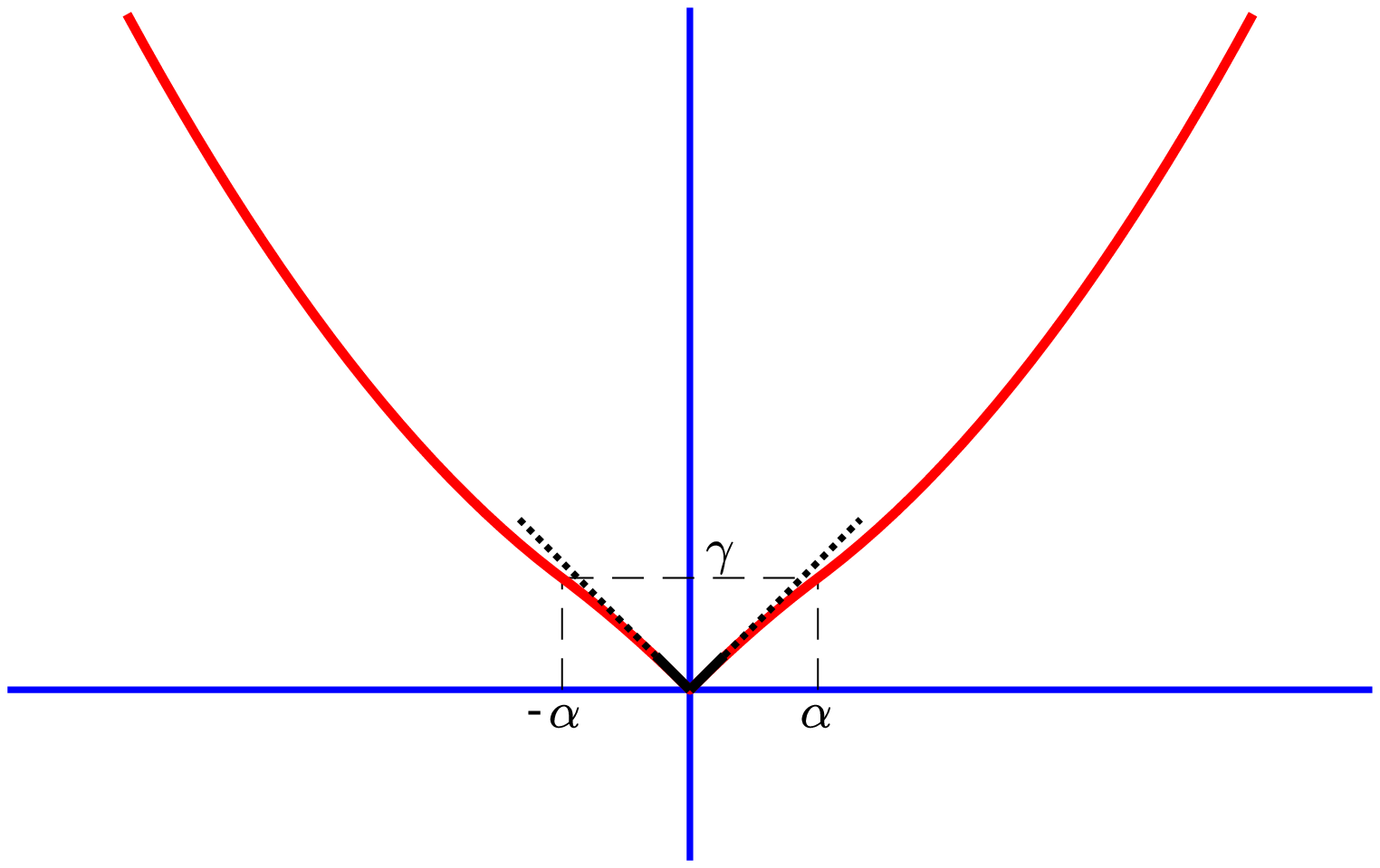}\\
(a) & & & (b)
\end{tabular}
\caption{Example 3. The graph of $f_\alpha$ when (a) $\alpha \geq 1$ and (b) $\alpha < 1$. The singularity of $f_\alpha$ at zero is emphasized in black (solid-dotted).}
\label{figure:ex3-falpha}
\end{figure}

According to the discussion given in subsection~\ref{subsec:quadratic}, we consider three cases: $\beta(\alpha - 1) + \alpha > 0$, $\beta(\alpha - 1) + \alpha = 0$, and $\beta(\alpha - 1) + \alpha < 0$. These cases are equivalent to $\alpha (\beta +1) > \beta$, $\alpha (\beta + 1) = \beta$, and $\alpha (\beta + 1) < \beta$ respectively. Recall that these cases correspond to the convexity (or lack thereof) of $f_\alpha(u) + \frac{1}{2\beta}(u-x)^2$ for $u$ close to zero.

\underline{Case 1: $\alpha(\beta+1)>\beta$.}  In this case, by Lemma~\ref{lemma:b2>0:case1} we have
\begin{equation}\label{ex4:beta<=alpha}
\prox_{\beta f_\alpha}(x) = \begin{cases} 0 & \text{if } |x| \leq \beta;\\
\frac{\alpha}{\alpha \beta - \beta + \alpha}(x-\beta\sign(x)) & \text{if } \beta \leq |x| \leq \alpha(\beta+1);\\
\frac{\alpha +1}{\alpha\beta + \alpha + 1}(x- \frac{\alpha\beta}{\alpha+1}\sign(x)) & \text{if } \alpha(\beta + 1) \leq |x|.
\end{cases}
\end{equation}

\underline{Case 2: $\alpha(\beta+1)=\beta$.} By Lemma~\ref{lemma:b2>0:case2} we have
\begin{equation}\label{ex4:beta>alpha1}
\prox_{\beta f_\alpha}(x) = \begin{cases} 0 & \text{if } |x| \leq \beta; \\
[0, \alpha]\sign(x) & \text{if } |x| = \beta; \\
\frac{\alpha + 1}{\alpha\beta + \alpha + 1}(x - \frac{\alpha \beta}{\alpha + 1} \sign(x)) & \text{if } \beta \leq |x|.
\end{cases}
\end{equation}

\underline{Case 3: $\alpha(\beta+1)<\beta$.} Define
\begin{equation}\label{eq:define-tau}
\tau = \frac{\alpha \beta}{\alpha + 1} + \frac{\sqrt{\alpha \beta(\alpha \beta + \alpha + 1)}}{\alpha + 1}.
\end{equation}
as in Lemma~\ref{lemma:b2>0:case3}. Then we have
\begin{equation}\label{ex4:beta>alpha2}
\prox_{\beta f_\alpha}(x) = \begin{cases} 0 & \text{if } |x| \leq \tau; \\
\{0,\omega\}& \text{if } |x|=\tau;\\
\frac{(\alpha + 1)x -\alpha \beta\sign(x)}{\alpha \beta + \alpha + 1}, & \text{if } |x|>\tau,
\end{cases}
\end{equation}
where $\omega = \frac{(\alpha+1)\tau-\alpha \beta}{\alpha\beta+\alpha+1}$.
The graphs of $\prox_{\beta f_\alpha}$ in the above three cases are plotted in Figure~\ref{figure:ex4-f-prox}.
\begin{figure}[h] \label{figure:ex4-f-prox}
\centering
\begin{tabular}{ccc}
\includegraphics[scale=0.35]{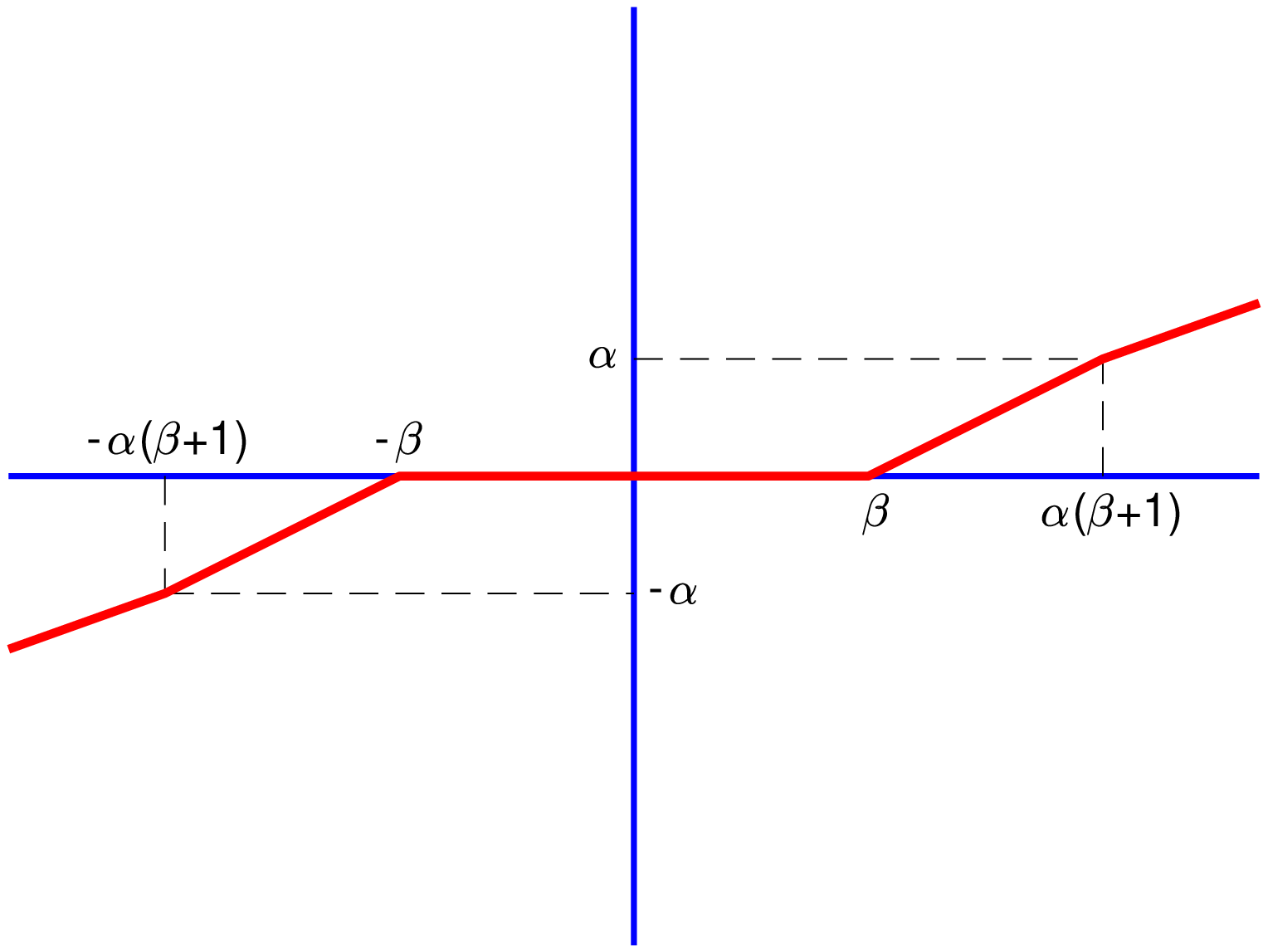} & \includegraphics[scale=0.35]{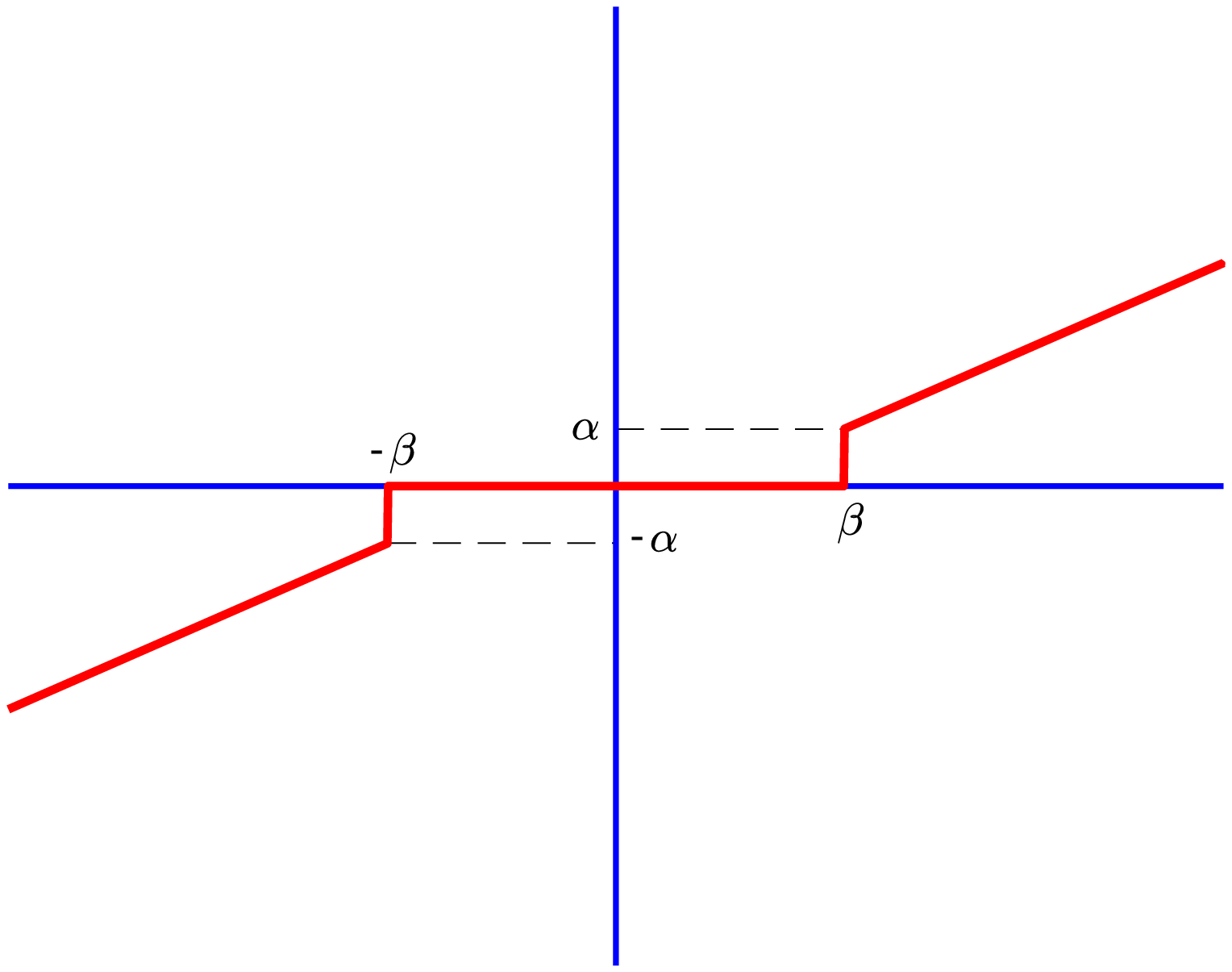}&\includegraphics[scale=0.35]{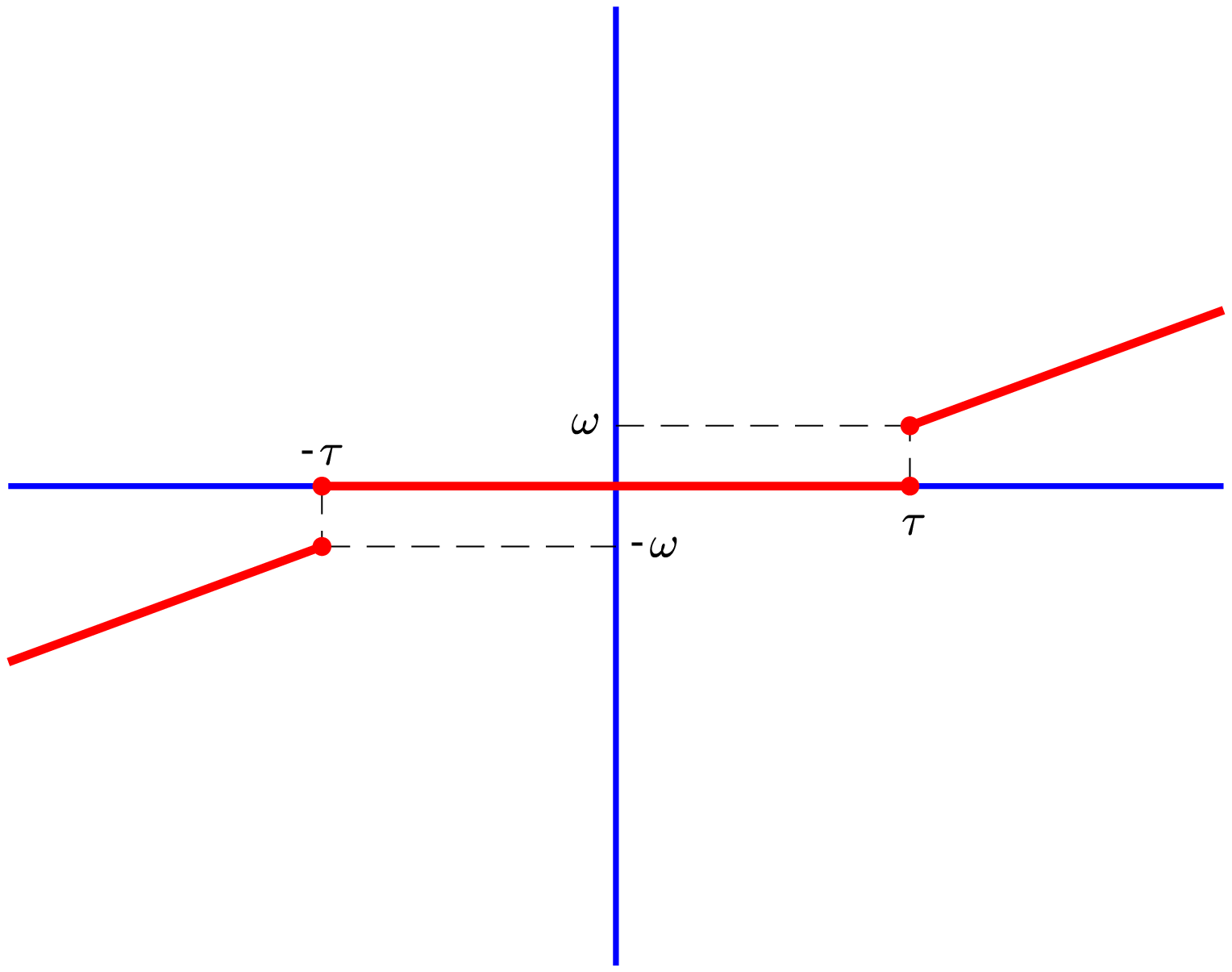}\\
(a) & (b) & (c)
\end{tabular}
\caption{Example 3. Typical shapes of $\prox_{\beta f_\alpha}$ when (a) $\alpha(\beta + 1) > \beta$, (b) $\alpha(\beta+1) = \beta$, and (c) $\alpha(\beta+1) < \beta$.}

\end{figure}

Below are some comments on this example.
\begin{itemize}
\item The function $f_\alpha$ in the first two examples is nonconvex for any $\alpha>0$, however, by Proposition~\ref{prop:semiconvex} it is convex if $\alpha \ge 1$ due to  our elastic net function $f$ being $1$-strongly convex.

\item The computation of the proximity operator $\prox_{\beta f_\alpha}$ is discussed under three different situations, namely, $\alpha(\beta + 1) > \beta$, $\alpha(\beta + 1) = \beta$, and $\alpha(\beta + 1) < \beta$. These situations are quite nature from Proposition~\ref{prop:semiconvex}. Since $f$ is $1$-strongly convex, hence, the function $f_\alpha+\frac{1}{2\beta}(\cdot-x)^2$ is $(1+\beta^{-1}-\alpha^{-1})$-strongly convex if $\alpha(\beta + 1) > \beta$, convex if $\alpha(\beta + 1) = \beta$, and $(\alpha^{-1}-1-\beta^{-1})$-semiconvex if $\alpha(\beta + 1) <\beta$.

\item For the case of $\beta\le \alpha$, we know that $\alpha(1+\beta)>\beta$, so the proximity operator given \eqref{ex4:beta<=alpha} covers both statements 1 and 2 in Theorem~\ref{thm:major}.

\item For the case of $\beta > \alpha$, there are three possible related cases. If $\alpha<\beta <\alpha(\beta+1)$ (resp. $\alpha<\beta = \alpha(\beta+1)$), the proximity operator given \eqref{ex4:beta<=alpha} (resp. \eqref{ex4:beta>alpha1}) shows that this operator vanishes all elements in $\beta \partial f(0)=[-\beta, \beta] \supset \alpha \partial f(0)$, fulfilling the third statement of Theorem~\ref{thm:major}. If $\beta>\alpha(\beta+1)$, we know that $\alpha<1$, $\beta>\frac{\alpha}{1-\alpha}$, and $\tau$ defined in \eqref{eq:define-tau} satisfying
    $$
    \tau=\frac{\alpha \beta}{\alpha + 1} + \frac{\sqrt{\alpha \beta(\alpha \beta + \alpha + 1)}}{\alpha + 1}>\frac{\alpha^2}{1-\alpha^2}+\frac{\alpha}{1-\alpha^2}>\alpha.
    $$
    Hence, the proximity operator given \eqref{ex4:beta>alpha2} annihilates all elements in $\tau \partial f(0) \supset \alpha \partial f(0)$, once again fulfilling the third statement of Theorem~\ref{thm:major}.
\end{itemize}

\subsection{Example 4: Absolute value on an interval centered at the origin}

Let $\lambda$ be a positive parameter. The absolute function on the interval $[-\lambda, \lambda]$ centered at the origin is
$$
f(x):=|x| + \iota_{[-\lambda, \lambda]}(x),
$$
which is a special case given in \eqref{def:ftidle} with $a_1=a_2=0$, $b_1=-1$, $b_2=1$, and $C=[-\lambda, \lambda]$.
Its proximity operator and Moreau envelope with parameter $\alpha$ at point $x$, respectively, are
\begin{equation*}
\prox_{\alpha f}(x) = \begin{cases}
0, & \text{ if $|x| \leq \alpha$;} \\
\sign(x)(|x| - \alpha), & \text{ if $\alpha < |x| \leq \alpha + \lambda$;} \\
\lambda \sign(x), &\text{ if $\alpha + \lambda < |x|$;}
\end{cases}
\end{equation*}
and
\begin{equation*}
\env_{\alpha f}(x) = \begin{cases}
|x| - \frac{\alpha}{2} + \frac{1}{2\alpha}(|x| - \alpha)^2, & \text{ if $|x| \leq \alpha$;} \\
|x| - \frac{\alpha}{2}, & \text{ if $\alpha < |x| \leq \alpha + \lambda$;} \\
|x| - \frac{\alpha}{2} + \frac{1}{2\alpha}(|x| - (\lambda + \alpha))^2, & \text{ if $\alpha + \lambda < |x|$.}
\end{cases}
\end{equation*}

\begin{figure}[h]\label{figure:ex3-f-prox}
\centering
\begin{tabular}{ccc}
\includegraphics[scale=0.35]{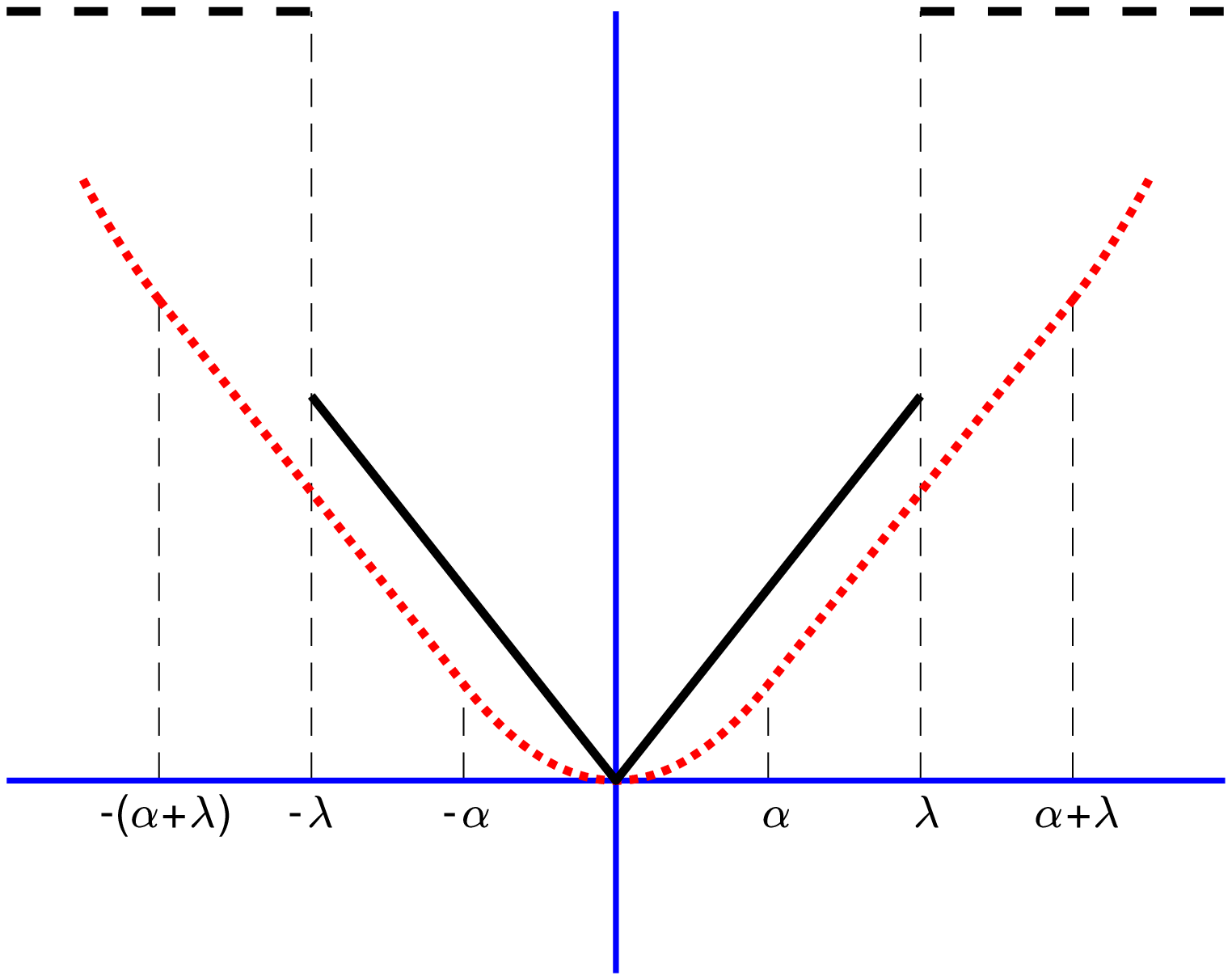} & \includegraphics[scale=0.35]{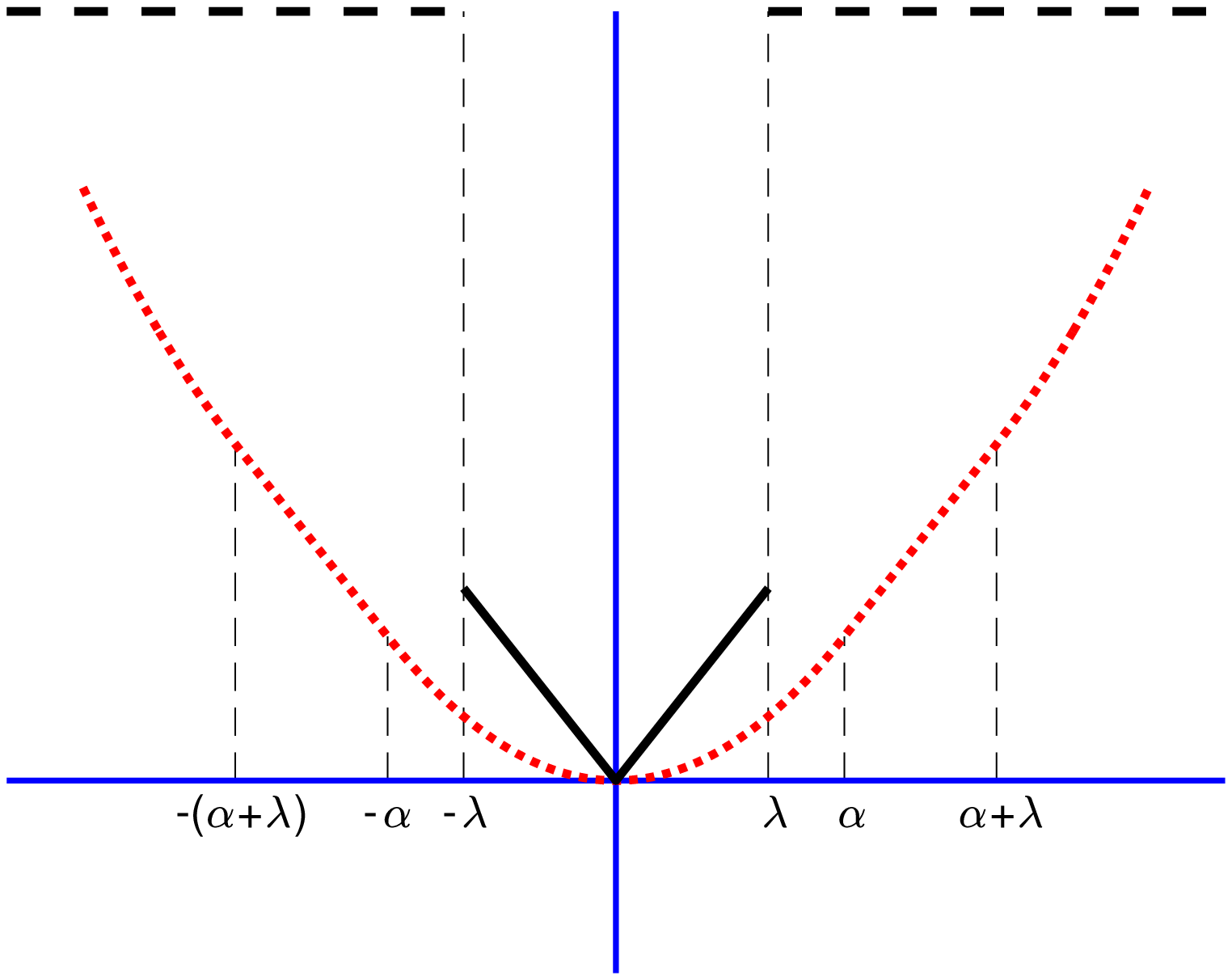}&\includegraphics[scale=0.35]{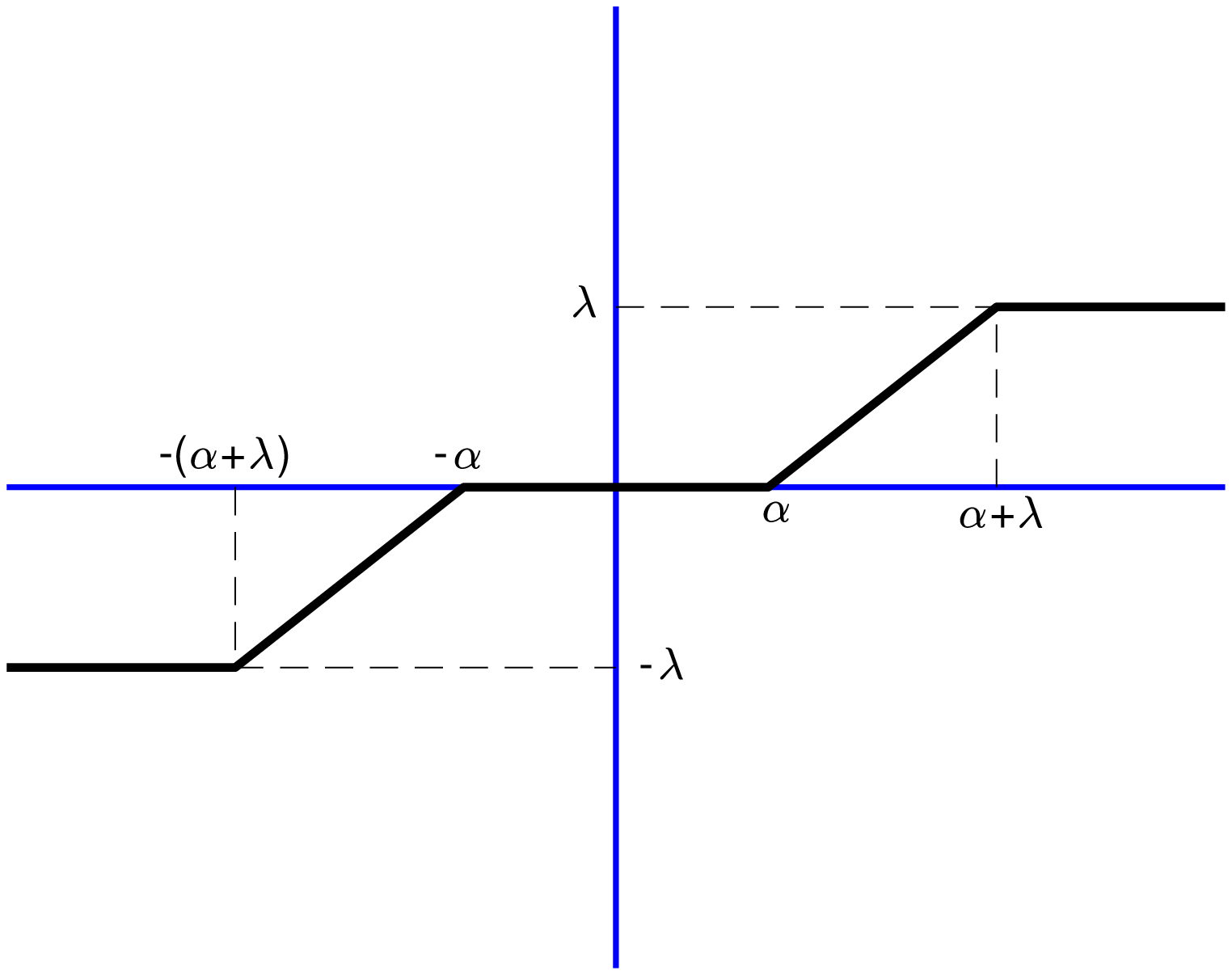}\\
(a) & (b) & (c)
\end{tabular}
\caption{Example 4. The graphs of $f$ (solid, dashed) and $\env_{\alpha f}$ (dotted) when (a) $\alpha < \lambda$ and (b) $\alpha > \lambda$. The graph of $\prox_{\alpha f}$ is shown in (c). Between $-(\alpha + \lambda)$ and $\alpha + \lambda$, $\prox_{\alpha f}$ is the soft thresholding operator with sparsity parameter $\alpha$; otherwise it projects onto this interval.}
\end{figure}
Figure~\ref{figure:ex3-f-prox} depicts the graphs of $f$, $\env_{\alpha f}$, and $\prox_{\alpha f}$. We observe that on the interval $[-\lambda, \lambda]$ (the domain of $f_\alpha$) the envelope $\env_{\alpha f}$ is piecewise quadratic polynomial (Figure~\ref{figure:ex3-f-prox}(a)) if $\alpha <\lambda$ and is simply quadratic polynomial (Figure~\ref{figure:ex3-f-prox}(b)) if $\alpha \ge \lambda$. It turns out that the expression of $\prox_{\beta f_\alpha}$ for  $\alpha <\lambda$ is much more complicated than that for $\alpha \ge \lambda$ as we will see below.

As both $f$ and $\env_{\alpha f}$ depend on $\alpha$ and $\lambda$, the explicit expression for $f_{\alpha}$ will depend on the values of these parameters. To compute the proximity operator $\prox_{\beta f_\alpha}$, we consider separately two main cases: $\alpha < \lambda$ and $\alpha \geq \lambda$.

\underline{Case 1: $\alpha < \lambda$.} In this case, we get (see Figure~\ref{figure:ex3:env-alpha<lambda})
\begin{equation}\label{tmp:1-falpha}
f_\alpha (x) = f(x) - \env_{\alpha f}(x) = \begin{cases} \frac{\alpha}{2} - \frac{1}{2\alpha}(|x| - \alpha)^2, & \text{ if $|x| \leq \alpha$;}\\
\frac{\alpha}{2}, & \text{ if $a \leq |x| \leq \lambda$;}\\
+ \infty, & \text{ if $\lambda < |x|$.}
\end{cases}
\end{equation}

\begin{figure}[h]\label{figure:ex3:env-alpha<lambda}
\centering
\includegraphics[scale=0.35]{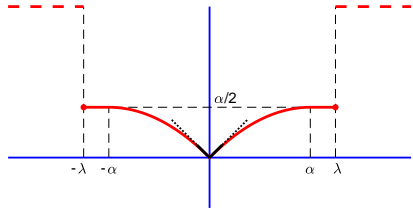}
\caption{Example 4. The graph of $f_\alpha$ when $\alpha < \lambda$ with the singularity of $f_\alpha$ at zero emphasized in black (solid-dotted). Further, we see that $f_\alpha$ agrees with Example 1 on $[- \lambda, \lambda ]$.}
\end{figure}

Depending on the values of $\alpha, \beta,$ and $\lambda$, we consider four possible cases: $\beta < \alpha < \lambda$, $\beta = \alpha < \lambda$, $\alpha < \beta \leq \lambda$, and $\lambda < \beta$.

\underline{Case 1.1: $\beta < \alpha < \lambda$.} In this case, we have
\begin{equation}\label{1.1prox}
\prox_{\beta f_{\alpha}} (x) = \begin{cases}
\max \{0, \frac{\alpha(|x| - \beta)}{\alpha - \beta}\}\sign(x), & \text{ if $|x| \leq \alpha$;}\\
\min\{ |x|, \lambda\} \sign(x), & \text{ if $|x|>\alpha$.}
\end{cases}
\end{equation}

\underline{Case 1.2: $\beta = \alpha < \lambda$.} In this case, we have
\begin{equation}\label{1.2prox}
\prox_{\beta f_\alpha}(x) = \begin{cases}
0, & \text{ if $|x| < \alpha$;}\\
\sign(x)[0, \alpha], & \text{ if $|x| = \alpha$;}\\
\sign(x)\min\{|x|, \lambda\}, & \text{ if $\alpha < |x|$,}
\end{cases}
\end{equation}

\underline{Case 1.3: $\alpha < \beta \leq \lambda$.}  In this case, we have
%
\begin{equation}\label{1.3prox}
\prox_{\beta f_\alpha}(x) = \begin{cases}
0, & \text{ if $|x| < \sqrt{\alpha \beta}$;}\\
\{ 0, \sign(x) \sqrt{\alpha \beta}\}, & \text{ if $|x| = \sqrt{\alpha \beta}$;}\\
\min \{ |x|, \lambda \} \sign(x), & \text{ if $\sqrt{\alpha \beta} < |x|$,}
\end{cases}
\end{equation}

\underline{Case 1.4: $\alpha < \lambda < \beta$.} We have
%
%
\begin{equation}\label{1.4prox}
\prox_{\beta f_\alpha}(x) = \begin{cases}
\{0 \}, & \text{ if $|x| < \frac{\alpha \beta + \lambda^2}{2\lambda}$;}\\
\{0, \lambda\sign(x)\}, & \text{ if $|x| = \frac{\alpha \beta + \lambda^2}{2\lambda}$;} \\
\{\lambda\sign(x)\}, & \text{ if $\frac{\alpha \beta + \lambda^2}{2\lambda} < |x|$,}
\end{cases}
\end{equation}

We now move on to the second main case.

\underline{Case 2: $\lambda \leq \alpha$.} In this case, we get (see Figure~\ref{figure:ex3:env-alpha>=lambda})
$$f_\alpha(x) = \begin{cases}
\frac{\alpha}{2} - \frac{1}{2\alpha}(|x| - \alpha)^2, & \text{ if $|x| \leq \lambda$;}\\
+ \infty, & \text{ otherwise.}
\end{cases}$$

\begin{figure}[h] \label{figure:ex3:env-alpha>=lambda}
\centering
\includegraphics[scale=0.35]{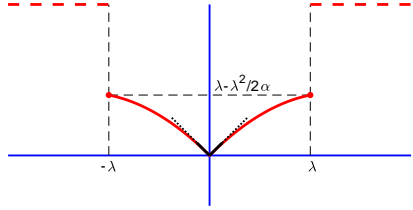}
\caption{Example 4. The graph of $f_\alpha$ when $\lambda \leq \alpha$ with the singularity of $f_\alpha$ at zero emphasized in black (solid-dotted). As before, $f_\alpha$ agrees with Example 1 on $[-\lambda, \lambda]$, but is cut off before it plateaus.}
\end{figure}

To compute $\prox_{\beta f_\alpha}$, we consider three situations: $\beta < \alpha$, $\beta = \alpha$, and $\beta > \alpha$.

\underline{Case 2.1: $\beta < \alpha$.} In this case, we have that
\begin{equation}\label{2.1prox}
\prox_{\beta f_\alpha} (x) = \begin{cases}
0, & \text{ if $|x| \leq \beta$;}\\
\frac{\alpha(|x| - \beta)}{\alpha-\beta}\sign(x), & \text{ if $\beta \leq |x| \leq \beta + \frac{\alpha-\beta}{\alpha}\lambda$;}\\
\lambda \sign(x), & \text{ if $\beta + \frac{\alpha - \beta}{\alpha}\lambda \leq |x|,$}
\end{cases}
\end{equation}

\underline{Case 2.2: $\beta = \alpha$.} In this case, we have
\begin{equation}\label{2.2prox}
\prox_{\beta f_\alpha}(x) = \begin{cases} 0, & \text{ if $|x| < \alpha$;}\\
\sign(x)[0, \lambda], & \text{ if $|x| = \alpha$;}\\
\lambda\sign(x), & \text{ if $\alpha < |x|$,}
\end{cases}
\end{equation}

\underline{Case 2.3: $\beta > \alpha$.} Similar to Case 1.4, we get

\begin{equation}\label{2.3prox}
\prox_{\beta f_\alpha}(x) = \begin{cases}
0, & \text{ if $|x| \leq \beta - \frac{\beta - \alpha}{2\alpha}\lambda$;}\\
\sign(x)\{0, \lambda\}, & \text{ if $|x| = \beta - \frac{\beta -\alpha}{2\alpha}\lambda$;}\\
\lambda \sign(x), & \text{ if $\beta - \frac{\beta - \alpha}{2\alpha}\lambda < |x|$,}
\end{cases}
\end{equation}

\begin{figure}[h]
\centering
\begin{tabular}{ccc}
\includegraphics[scale=0.35]{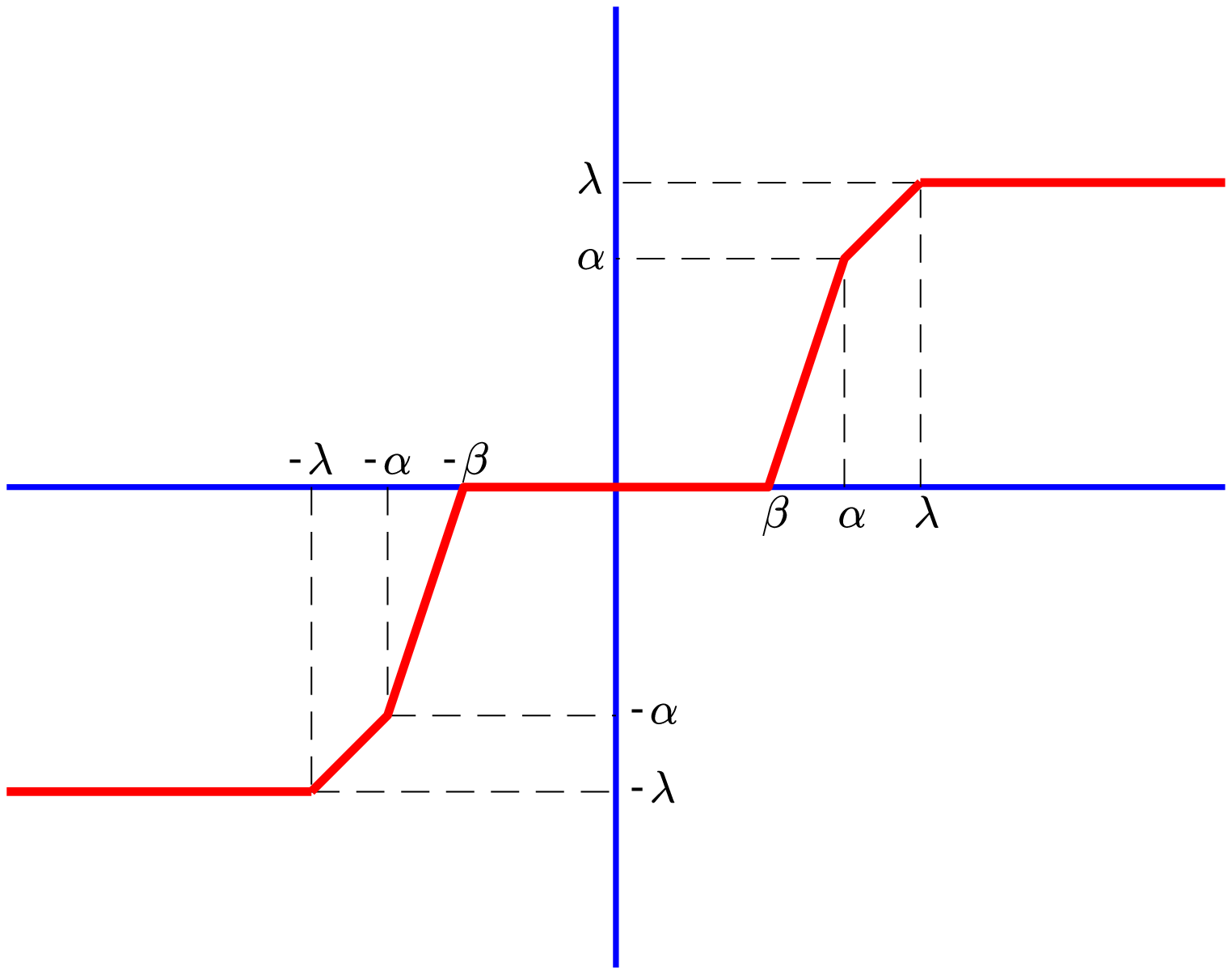} & \includegraphics[scale=0.35]{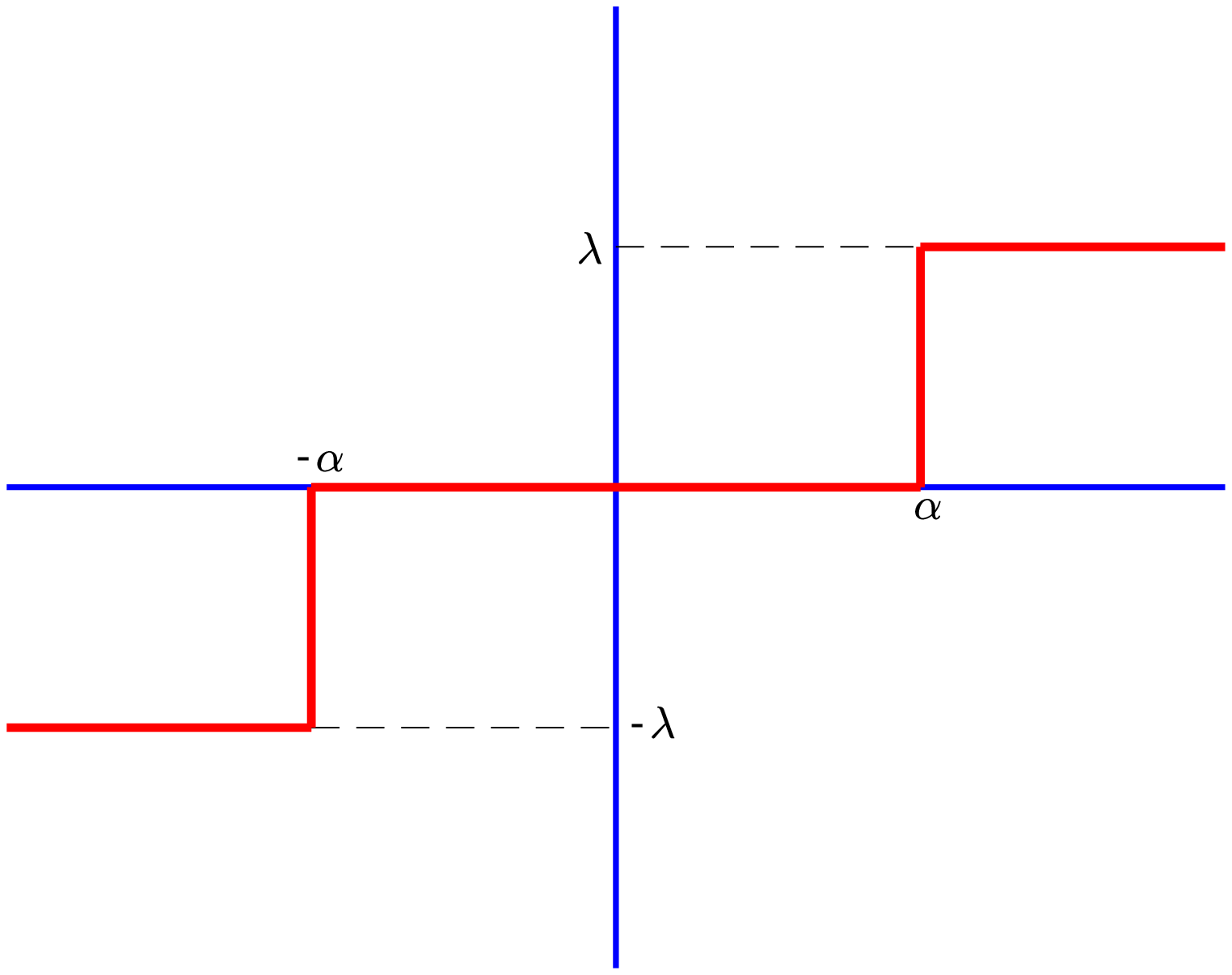}&\includegraphics[scale=0.35]{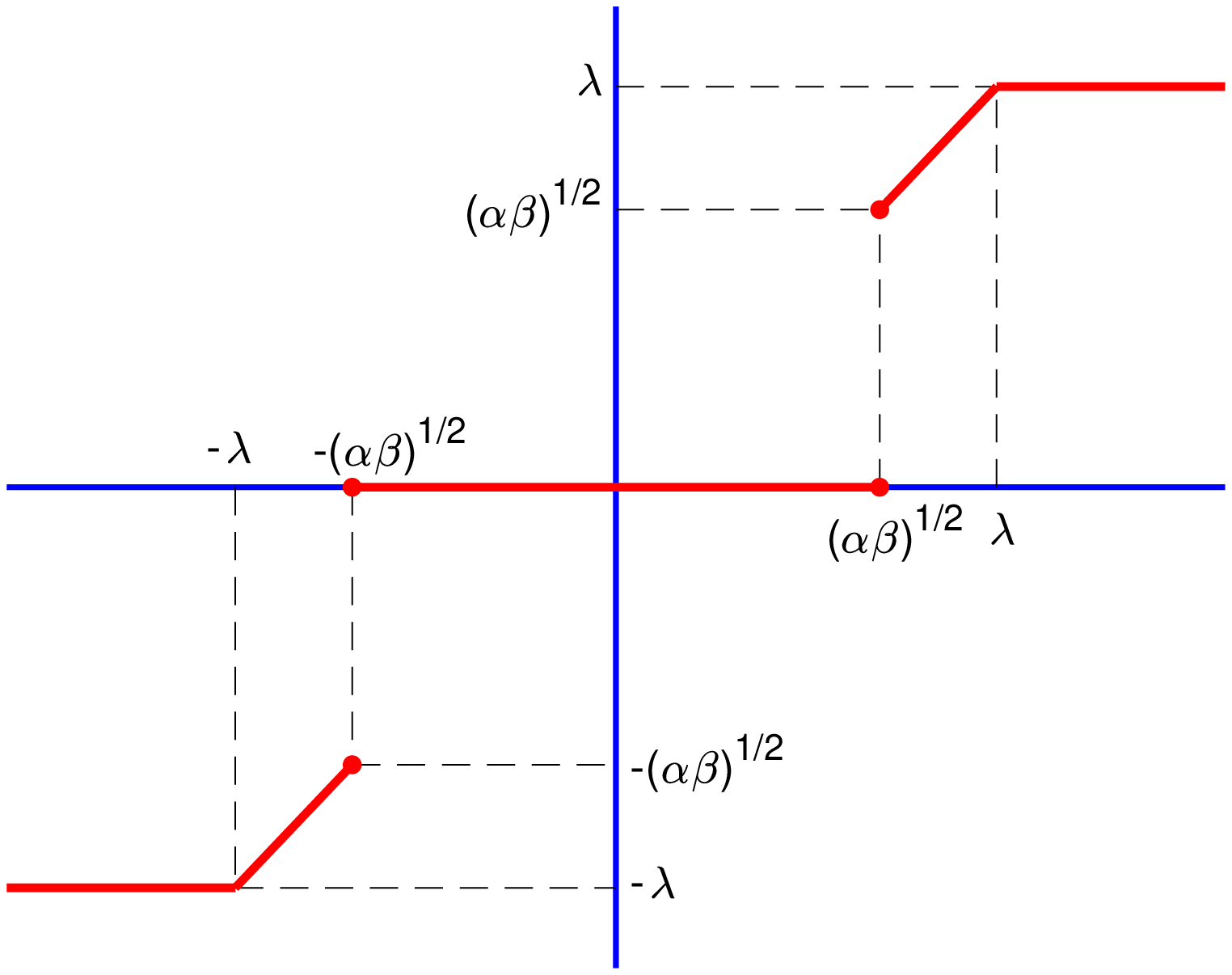}\\
(a) & (b) & (c)
\end{tabular}
\caption{Example 4. Typical shapes of $\prox_{\beta f_\alpha}$ in (a) Case 1.1: $\beta < \alpha < \lambda$, (b) Case 2.2: $\beta = \alpha \geq \lambda$, and (c) Case 1.3: $\alpha < \beta \leq \lambda$. In each case, we see that the absolute threshold is $\lambda$, while the sparsity threshold and thresholding behavior depend on $\alpha$ and $\beta$. }
\end{figure}

To end up this example, we comment on this example in a comparison with Theorem~\ref{thm:major}.
\begin{itemize}
\item Note that $\partial f(0)=[-1,1]$. For $\beta<\alpha$, both equations \eqref{1.1prox} and \eqref{2.1prox} show that the operator $\prox_{\beta f_\alpha}$ vanishes all elements in $\beta \partial f(0)=[-\beta, \beta]$ as required by the first statement of Theorem~\ref{thm:major}.

\item For $\beta=\alpha$, both equations \eqref{1.2prox} and \eqref{2.2prox} show that the operator $\prox_{\beta f_\alpha}$ vanishes all elements in $\mathrm{ri}(\alpha \partial f(0))=(-\alpha, \alpha)$ as described in the second statement of Theorem~\ref{thm:major}.

\item For $\beta>\alpha$, since $\sqrt{\alpha \beta} > \alpha$, $\frac{\alpha\beta+\lambda^2}{2\lambda}>\alpha$ when $\alpha<\lambda <\beta$, and $\beta-\frac{\beta-\alpha}{2\alpha}\lambda \ge \alpha$ when $\alpha \ge \lambda$, then equations \eqref{1.3prox}, \eqref{1.3prox},  and \eqref{2.3prox} shows that the operator $\prox_{\beta f_\alpha}$ vanishes all elements in $\mathrm{ri}(\alpha \partial f(0))=(-\alpha, \alpha)$ as described in the third statement of Theorem~\ref{thm:major}.
\end{itemize}

To close this section, Table~\ref{table:prox} lists the proximity operators $\prox_{\beta f_\alpha}$ of all examples.
\begin{table}[htb]\normalsize
\caption{Proximity operators for all examples}\label{table:prox}
\begin{center}
\begin{tabular}{|c|cc|cc|ccc|}
\hline
\rule{0pt}{4ex}    Function& \multicolumn{2}{c|}{$\beta < \alpha$}&  \multicolumn{2}{c|}{$\beta = \alpha$} & \multicolumn{3}{c|}{$\beta > \alpha$} \B \\
\hline

\rule{0pt}{3ex}     $f(x)=|x|$ & \multicolumn{2}{c|}{\eqref{eq:beta<alpha}}  & \multicolumn{2}{c|}{\eqref{eq:beta=alpha}}  & \multicolumn{3}{c|}{\eqref{eq:beta>alpha}} \T\B \\
\hline

\rule{0pt}{3ex}    $f(x)=\max\{0,x\}$ & \multicolumn{2}{c|}{\eqref{ex2:beta<alpha}}  & \multicolumn{2}{c|}{\eqref{ex2:beta=alpha}}  & \multicolumn{3}{c|}{\eqref{ex2:beta>alpha}} \T\B  \\
\hline

\rule{0pt}{3ex}    & \multicolumn{2}{c|}{}  & \multicolumn{2}{c|}{}&$\beta<\alpha(\beta+1)$ &$\beta=\alpha(\beta+1)$&$\beta>\alpha(\beta+1)$ \T\B\\

\rule{0pt}{3ex}    $f(x)=\frac{1}{2}x^2+|x|$  & \multicolumn{2}{c|}{\eqref{ex4:beta<=alpha}}  & \multicolumn{2}{c|}{\eqref{ex4:beta<=alpha}}&\eqref{ex4:beta<=alpha} &\eqref{ex4:beta>alpha1} &\eqref{ex4:beta>alpha2} \T\B \\
\hline

\rule{0pt}{3ex}    &$\alpha<\lambda$& $\alpha \ge\lambda$&  $\alpha<\lambda$& $\alpha \ge\lambda$ & $\beta\le\lambda$&$\beta>\lambda$& $\alpha \ge\lambda$ \T\B \\
$f(x)=|x|+\iota_{[-\lambda,\lambda]}$ &\eqref{1.1prox}&\eqref{2.1prox}&\eqref{1.2prox}&\eqref{2.2prox}&\eqref{1.3prox}&\eqref{1.4prox}&\eqref{2.3prox} \T\B \\
\hline

\end{tabular}
\end{center}    `
\end{table}

\section{Conclusions}\label{sec:concluions}
We presented a simple scheme to construct a family of semiconvex structured sparsity promoting functions from any convex sparsity promoting function. Theoretical guarantees of sparsity promotion were proved in Section~\ref{sec:NSPF}, among other properties related to the structure of these functions. In Section~\ref{sec:Special}, we expanded upon these results in the case of indicator and piecewise quadratic functions. We demonstrated that the classical MCP can be derived under this framework, while also providing several other examples motivated by a variety of applications.

Because of the structure of the proposed functions, we can use convex, nonconvex, and difference of convex algorithms in practice. We plan on testing these examples on problems such as signal denoising and variable selection. Furthermore, we hope to use the unique properties of these functions to develop new algorithms. Other future work will also expand upon the theoretical properties of these functions.
\section*{Disclaimer and Acknowledgment of Support}
Any opinions, findings and conclusions or recommendations expressed in this material are those of the authors
and do not necessarily reflect the views of AFRL (Air Force Research Laboratory).

Lixin Shen is partially supported by the US National Science Foundation under grant DMS-1522332.

\bibliographystyle{siam}

\end{document}